\numberwithin{equation}{section}
\newcommand{\ep}{\epsilon}
\newcommand{\R}{\mathbb{R}}
\newcommand{\de}{\delta}
\newcommand{\eps}{\ep}
\newcommand{\ga}{\gamma}
\newcommand{\Om}{\Omega}
\newcommand{\la}{\lambda}
\newcommand{\ol}{\overline}
\newcommand{\pl}{\partial}
\newcommand{\vep}{\varepsilon}
\newcommand{\al}{\alpha}
\newcommand{\avint}{\rlap{$-$}\!\int_{\Omega}}
\DeclareMathOperator*{\esssup}{ess\,sup}
\newtheorem{theorem}{Theorem}[section]
\newtheorem{lemma}{Lemma}[section]
\newtheorem{proposition}{Proposition}[section]
\newtheorem{rem}{Remark}[section]
\def\be{\begin{equation}}
\def\ee{\end{equation}}
\def\bge{\begin{eqnarray}}
\def\bgee{\begin{eqnarray*}}
\def\ege{\end{eqnarray}}
\def\egee{\end{eqnarray*}}
\begin{document}

\title{On the dynamics of a non-local parabolic equation arising from the Gierer-Meinhardt system}

\author{Nikos I. Kavallaris}
\address{
 Department of Mathematics, University of Chester, Thornton Science Park
Pool Lane, Ince, Chester  CH2 4NU, UK
}

\email{n.kavallaris@chester.ac.uk}

\author{Takashi Suzuki}

\address{Division of Mathematical Science\\
Department of System Innovation\\
Graduate School of Engineering Science\\
Osaka University\\
Machikaneyamacho 1-3\\
Toyonakashi, 560-8531, Japan}

\email{suzuki@sigmath.es.osaka-u.ac.jp}

\subjclass{Primary: 35B44, 35K51 ; Secondary: 35B36, 92Bxx }

\keywords{Pattern formation, Turing instability, activator-inhibitor system, shadow-system, invariant regions, diffusion-driven blow-up}

\date{\today}
\maketitle
\begin{abstract}
The purpose  of the current paper is to contribute to the comprehension of the dynamics of the shadow system of an activator-inhibitor system known as a Gierer-Meinhardt model. Shadow systems are intended to work as an intermediate step between single equations and reaction-diffusion systems. In the case where the inhibitor's response to the activator's growth is rather weak, then the shadow system of the Gierer-Meinhardt model is reduced to a single though non-local equation whose dynamics will be investigated. We mainly focus on the derivation of blow-up results for this non-local equation which can be seen as instability patterns of the shadow system. In particular, a {\it diffusion driven  instability (DDI)}, or {\it Turing instability},  in the neighbourhood of a constant stationary solution, which it is destabilised  via diffusion-driven blow-up, is obtained. The latter actually indicates the formation of some unstable patterns, whilst some stability results of global-in-time solutions towards non-constant steady states guarantee the occurrence of some stable patterns.

\end{abstract}

\section{Introduction}
In as early as 1952, A. Turing in his seminal paper \cite{t52} attempted, by using reaction-diffusion systems, to model the phenomenon of {\it morphogenesis}, the regeneration of tissue structures in hydra, an animal of a few millimeters in length made up of approximately 100,000 cells. Further observations on the morphogenesis in hydra led to the assumption of the existence of two chemical substances (morphogens), a slowly diffusing (short-range) activator and a rapidly diffusing (long-range) inhibitor. A. Turing was the first to indicate that although diffusion has a smoothing and trivializing effect on a single chemical, for the case of the interaction of two or more chemicals different diffusion rates could force the uniform steady states of the corresponding reaction-diffusion systems to become unstable and to lead to nonhomogeneous distributions of such reactants. Such a phenomenon is now known as {\it diffusion driven  instability (DDI)}, or {\it Turing instability}.

Exploring Turing's idea further,  A. Gierer and H. Meinhardt, \cite{gm72}, proposed in 1972 the following activator-inhibitor system, known since then as a Gierer-Meinhardt system, to model the regeneration phenomenon of hydra located in a domain $\Om\subset \R^N, N\geq 1$ \bge
&& u_t= \ep^2 \Delta u-u+\displaystyle\frac{u^p}{v^q}, \quad\mbox{in}\quad \Om \times (0,T), \label{gm1}\\
&& \tau v_t = D \Delta v-v+\displaystyle\frac{u^r}{v^s}, \quad\mbox{in}\quad \Om \times (0,T),\label{gm2}\\
&&\displaystyle\frac{\partial u}{\partial \nu}=\displaystyle\frac{\partial v}{\partial \nu}=0,\quad\mbox{on}\quad \pl\Om\times (0,T),\label{gm3}\\
&& u(x,0)=u_0(x)>0,\quad v(x,0)=v_0(x)>0,\quad\mbox{in}\quad \Om, \label{gm4}
\ege
where $\nu$ denotes the unit outer normal vector to $\pl \Om$ whilst $u$ and $v$ stand for the concentrations of the activator and the inhibitor respectively. System \eqref{gm1}-\eqref{gm4} intends to provide a thorough explanation of symmetry breaking as well as of {\it de novo} pattern formation by virtue of the coupling of a local activation and a long-range inhibition process. The inserted nonlinearities describe the fact that the activator promotes the differentiation process and it stimulates its own production, whereas  the inhibitor acts a suppressant against the self-enhancing activator to prevent the unlimited growth.

Here, $\ep^2, D$ represent the diffusing coefficients whereas the exponents satisfying the conditions:
$$p>1,\; q, r,>0,\quad\mbox{and}\quad s>-1,$$ measure the morphogens interactions. In particular, the dynamics of system \eqref{gm1}-\eqref{gm4} can be characterised by two numbers: the {\it net self-activation index} $\rho\equiv (p-1)/r$ and the {\it net cross-inhibition index} $\gamma\equiv q/(s+1).$ Indeed, $\rho$ correlates the strength of self-activation of the activator with the cross-activation of the inhibitor. So, if $\rho$ is large, then the net growth of the activator is large no matter the inhibitor's growth. On the other hand, $\gamma$ measures how strongly the inhibitor suppresses the production of the activator and that of itself. Now if $\gamma$ is large then the production of the activator is strongly suppressed by the inhibitor. Finally, the parameter $\tau$ quantifies the inhibitor's response against the activator's growth.

Guided by biological interpretation as well as by mathematical reasons, it is usually assumed that the parameters $p,q,r,s$ satisfy the following condition
\bgee
\rho\equiv\frac{p-1}{r}<\displaystyle\frac{q}{s+1}\equiv \gamma,
\egee
or equivalently
\begin{equation}\label{turing2}
p-r\gamma <1
\end{equation}
Condition \eqref{turing2} is called a {\it Turing condition} whilst
the reverse inequality
\begin{equation}\label{aturing}
p-r\gamma > 1
\end{equation}
will be referred to as an {\it anti-Turing condition}.

The {\it Turing condition} guarantees, \cite{nst06}, that the spatially homogeneous equilibrium $(u,v)=(1,1)$ of the corresponding kinetic (ODE) system
\begin{equation}
\displaystyle\frac{du}{dt}=-u+\displaystyle\frac{u^p}{v^q}, \quad
\tau \displaystyle\frac{dv}{dt}=-v+\displaystyle\frac{u^r}{v^s}.
 \label{ode1}
\end{equation}
is stable if $\tau<\frac{s+1}{p-1}.$ Nevertheless, once diffusion terms are introduced, with $\ep^2\ll D,$ and under \eqref{turing2} then $(u,v)=(1,1)$ becomes unstable and bifurcation occurs, see also \cite{nst06}. Therefore, {\it diffusion driven instability (DDI)} takes place which leads to pattern formation and then explains the phenomenon of morphogenesis.

Apart from its vital biological importance the system \eqref{gm1}-\eqref{gm4} has also interesting mathematical features and emerging singularities. As such, it has attracted a lot of attention from the field of mathematical analysis. Subjects of interest include the existence of global-in-time solutions, which was first investigated in \cite{r84} and then studied more thoroughly in \cite{msy95, mt87}. The author in \cite{j06} proved that under the condition
$\frac{p-1}{r}<1$, a global-in-time solution exists, which is an almost optimal result, also taking into consideration the results in \cite{nst06}. Furthermore, \cite{ksy13} contains an investigation of  the asymptotic behaviour of the solution of (\ref{gm1})-(\ref{gm4}). In particular the authors showed that if $\tau=\frac{s+1}{p-1}$, $s>0$, and
\bgee\frac{2\sqrt{d_1\,d_2}}{d_1+d_2}\geq \sqrt{\frac{(s+1)(p-1)}{sp}}, \quad d_1=\ep^2, \ d_2=\tau^{-1} D, \egee
then the global-in-time solution of (\ref{gm1})-(\ref{gm4}) is approaching uniformly a spatially homogeneous solution, which is always periodic-in-time unless it is a constant one. The occurrence of finite-time blow-up, which actually means unlimited growth for the activator,  was first established in \cite{msy95} and later in \cite{ksz16, lps17, z15}, whereas the case of nondiffusing activator finite-time blow-up is also investigated in \cite{ksz16}. The existence and stability of spiky stationary solutions is thoroughly studied in the survey paper \cite{wei08}.

As specified above, in the case of the Gierer-Meinhardt system, the inhibitor diffuses much faster compared to the activator, i.e. $\eps^2\ll D,$ and thus the system (\ref{gm1})-(\ref{gm4}) can be fairly approximated by its {\it shadow system} when $D\gg 1.$ The concept of a {\it shadow system} was introduced by Keener, \cite{ke78}, to describe the qualitative behaviour of reaction-diffusion systems when one of the diffusing coefficients is very large. Such a system is formed by a reaction-diffusion equation coupled with an ordinary differential equation (ODE) with non-local effects and it actually contains all the essential dynamics of the original reaction-diffusion system. In particular, if there is a compact attractor for the  {\it shadow system} the original reaction-diffusion system has a compact attractor too, see also \cite{hs89}.

In the following we provide a formal derivation of the {\it shadow system} of the Gierer-Meinhardt system (\ref{gm1})-(\ref{gm4}). A rigorous proof can be found in \cite{mm17, mchks} where it is also shown that the convergence  of the original reaction-diffusion system towards its {\it shadow system} is valid locally in time except for an initial layer. Now, dividing (\ref{gm2}) by $D$ and letting $D\uparrow+\infty$ for any fixed $t\in(0,T),$ then due to the boundary condition \eqref{gm3} $v$  becomes spatial homogeneous, i.e. $v(x,t)=\xi(t).$  Next, integrating the resulting equation over $\Om$  we finally derive that $u(x,t),\xi(t)$ satisfy the {\it shadow system}:
\begin{eqnarray}
& & u_t=\ep^2 \Delta u-u+\displaystyle\frac{u^p}{\xi^q}, \quad\mbox{in}\quad \Om \times (0,T), \label{sd1}\\
& & \tau \xi_t=-\xi+\displaystyle\frac{1}{\xi^s} \displaystyle\avint u^r \,dx \quad\mbox{in}\quad \Om \times (0,T),\label{sd2}\\
& & \displaystyle\frac{\partial u}{\partial \nu}=0, \quad\mbox{on}\quad \partial \Om\times (0,T),\label{sd3}\\
& & u(x,0)=u_0(x)>0,\quad  \xi(0)=\bar{v}_0=\frac{1}{|\Om|}\int_{\Om} v_0,\quad\mbox{in}\quad \Om,
 \label{sd4}
\end{eqnarray}
where
\bgee
\avint u^r \, dx\equiv\frac{1}{|\Om|} \int_{\Om} u^r \, dx.
\egee
Note that \eqref{sd1}-\eqref{sd4} is  non-local due to the presence of the integral term in (\ref{sd2}).

Since the convergence towards \eqref{sd1}-\eqref{sd4} holds only locally in time there might be discrepancies in global-in-time dynamics between of the (\ref{gm1})-(\ref{gm4}) and those of \eqref{sd1}-\eqref{sd4} for some range of the involved parameters $p,q,r,s;$  this has been also indicated in \cite{ln09, ly14}.  On the other hand, there are ranges of the involved parameters, where the two systems have exactly the same long-time behaviour \cite[Theorem 1]{ln09} and thus it is worth investigating the {\it shadow system} \eqref{sd1}-\eqref{sd4}, which is simpler compared to the full system (\ref{gm1})-(\ref{gm4}), so we can capture some of the features of (\ref{gm1})-(\ref{gm4}).

%For a thorough study of the shadow system \eqref{sd1}-\eqref{sd4} when $\Om=B(0,1)=\{x\in \R^N \mid \vert x\vert<1\}$ %see \cite{ly14}.

Henceforth, we focus on the case where $\tau=0;$ i.e. when the inhibitor's response rate is quite small against the inhibitor's growth. We will investigate the dynamics of \eqref{sd1}-\eqref{sd4} in a forthcoming paper.  For $\tau=0$ the second equation  \eqref{sd2} is solved as
\[ \xi(t)=\left( \avint u^r(x,t)\,dx\right)^{\frac{1}{s+1}}, \]
and thus the {\it shadow system} reduces to the following non-local problem
\bge
&& u_t= \Delta u -u +\displaystyle\frac{u^p}{\Big(\displaystyle\avint u^r \, dx\Big)^{\gamma}}, \quad \mbox{in}\quad \Omega \times (0, T),\label{bsd1}\\
&& \displaystyle\frac{\partial u}{\partial \nu}=0, \quad\mbox{on}\quad \partial \Om\times (0,T),\label{bsd2}\\
&& u(x,0)=u_0(x)>0,\quad  \mbox{in}\quad \Om,
 \label{bsd3}
\ege
where for simplicity it has been considered $\ep=1.$

The rest of the current work is devoted to the study of problem the \eqref{bsd1}-\eqref{bsd3} whose mathematical structure is intriguing. In particular, due to the presence of the non-local term and the monotonicity of its nonlinearity, then problem  \eqref{bsd1}-\eqref{bsd3} does not admit a maximum principle, \cite{qs07}, and so alternatives to comparison techniques should be employed to investigate its long-time behaviour. Some global-in-time existence and blow-up results for problem \eqref{bsd1}-\eqref{bsd3} were presented in \cite{ly14}, whereas some slow moving spike solutions were constructed in \cite{iw00}.  In the current paper, we provide novel global-in-time and blow-up results, extending further the mathematical analysis provided in \cite{ly14}, as well as describing the form of the destabilization patterns developed due to the phenomenon of {\it DDI}.

%Similar non-local problems have studied An analogous  to (\ref{bsd1}) equation but without the dissipative term $-u$ associated Dirichlet boundary conditions was studied in \cite{bl97}, where different techniques to the ones employed in the current paper were used.
In addition, the investigation of the non-local problem \eqref{bsd1}-\eqref{bsd3} is also attractive from the biological point of view. Specifically, it will reveal under which circumstances the dynamics of the interaction of the two morphogens (activator and inhibitor) can be controlled by governing only the dynamics of the activator itself.

The rest of the manuscript is composed of nine sections. In the next section we provide the main notation  as well as some preliminary results used throughout the manuscript. Our main results, which include the existence of global-in-time and  blowing up solutions of \eqref{bsd1}-\eqref{bsd3}, are presented in section \ref{sec:mr}. Section \ref{sec:qi} contains the proof of a lower estimate of any solution of \eqref{bsd1}-\eqref{bsd3} which actually guarantees its well posedness.  In section \ref{sec:2} we treat the special case $r=p+1,$ whence problem \eqref{bsd1}-\eqref{bsd3} has a variational structure, and under the Turing condition we  prove that global-in-time solutions converge towards steady states through Turing patterns. Section \ref{sec:3} contains a global-in-time existence result for \eqref{bsd1}-\eqref{bsd3} analogous to the one in \cite{j06}. In section \ref{sec:4} we derive proper estimates of $L^{\ell}-$norms, $\ell>1,$ of the solution $u(x,t)$ which either lead to global-in-time existence or to finite-time blow-up. A {\it DDI} result, which is actually exhibited in the form of a diffusion-driven blow-up for peaky initial data,  is proven in section \ref{sec:5}. Finally section \ref{sec:7} investigates the blow-up rate as well the blow-up profile of the derived blowing up solutions and section \ref{conc} summarizes the main conclusions of the current work.

\section{Preliminaries}\label{prl}
%In the current work we present a thorough investigation of the long-time behaviour of the solutions of non-local problem %\eqref{bsd1}-\eqref{bsd2}. Due to our analysis many interesting features of the above problem are revealed including %diffusion-driven blow-up and the presence of some invariant regions in the phase space.
Throughout the manuscript by $||h||_{\ell}, \;\ell>0,$ we denote the $L^{\ell}-$norm of function $h$ defined as:
\bgee
||h||_{\ell}:=\left(\avint |h|^{\ell}\,dx\right)^{1/\ell}, \quad 0<\ell<\infty,
\egee
and
\bgee
||h||_{\infty}:=\esssup_{\Om} |h|,
\egee
whereas $||h||_{H^1}$ stands for the norm of the Sobolev space $H^1(\Om)$ defined as:
\bgee
||h||_{H^1}:=\left(\avint (|h|^{2}+|\nabla h|^2)\,dx\right)^{1/2}.
\egee
Moreover, if $\Delta$ is the Laplace operator associated with Neumann boundary conditions then by $e^{t\Delta}$ we denote its semigroup. Then the well-known estimate, \cite{r84}, holds
\begin{equation}
\Vert e^{t\Delta}h\Vert_q\leq C\max\{ 1,
t^{-\frac{N}{2}(\frac{1}{\ell}-\frac{1}{q})}\} \Vert h\Vert_\ell, \quad 1\leq \ell\leq q\leq \infty.
 \label{eqn:1.2}
\end{equation}
%{\color{red}( Takashi please put here a definition of invariant regions!!!)}

Note that under condition \eqref{turing2}  the solution of the spatially homogeneous part
\bge\label{ts1}
\frac{du}{dt}=-u+u^{p-r\gamma}, \quad u(0)=\bar{u}_0>0,
\ege
never exhibits blow-up, since the non-linearity is sublinear, and its unique stationary state $u=1$ is asymptotically stable. Below, by using stability analysis we show that condition \eqref{turing2} implies linear instability.

Indeed, the linearized problem of (\ref{bsd1})-(\ref{bsd3}) around $u=1$ is given by
\bgee
&&\phi_t=\Delta \phi+(p-1)\phi-r\gamma \avint \phi, \quad \mbox{in}\quad\Omega \times (0, T),\label{ln1}\\
&&\frac{\partial \phi}{\partial\nu}=0, \quad\mbox{on}\quad\partial\Om, \label{ln2}
\egee
and can be written in the form of an evolution equation in $X=L^2(\Omega)$ as
\[ \frac{d\phi}{dt}=-A\phi. \]
Here the generator $A$ is a self-adjoint operator associated with the bi-linear form (see Kato \cite{kato})
\[ a(\phi,w)=\avint\left(\nabla \phi\cdot \nabla w+(1-p)\phi w \right)\ dx+r\gamma \avint \phi\cdot\avint w, \quad \phi,w\in V=H^1(\Omega). \]
Now for $\phi=w$ we derive
\begin{eqnarray*}
a(\phi,\phi) & = & \Vert \nabla \phi\Vert_2^2+(1-p)\avint \phi^2+r\gamma \left(\avint \phi\right)^2 \\
& = & \sum_{j=1}^\infty\mu_j^2\vert (\phi,\varphi_j)\vert^2+\sum_{j=1}^\infty(1-p)\vert (\phi, \varphi_j)\vert^2+r\gamma\vert (\phi,\varphi_1)\vert^2 \\
& = & (1-p+r\gamma)\vert (\phi, \varphi_1)\vert^2+\sum_{j=2}^\infty(\mu_j^2+1-p)\vert (\phi, \varphi_j)\vert^2,
\end{eqnarray*}
where $0=\mu_1<\mu_2\leq \cdots \rightarrow\infty$ denote the eigenvalues of $-\Delta$ associated with the Neumann boundary condition, and $\varphi_j$ is the corresponding $j$-th eigenfunction normalized by $\Vert \varphi_j\Vert_2=1$.
Note that under the Turing condition (\ref{turing2}), the linearized instability of the steady-state solution $u=1$ arises if and only if $\mu^2_2<p-1.$ The latter suggests that under condition (\ref{turing2}) a Turing instability phenomenon should be anticipated, which in particular  as shown in Theorem \ref{thm:1.7},  this Turing instability is exhibited in the form of a diffusion-driven blow-up.

\section{Main Results}\label{sec:mr}
In the current section we present our main results and we prove them in the following sections.

The first observation regarding the solution of \eqref{bsd1}-\eqref{bsd3} is that it never quenches in finite time. Indeed, the following holds
\begin{proposition}\label{lbd}
Each $T>0$ admits $C_T>0$ such that
\begin{equation} \label{jg0}
u(x,t)\geq C_T\quad \mbox{in $\Om\times [0,T)$}.
\end{equation}
\end{proposition}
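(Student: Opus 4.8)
The plan is to show that the activator $u$ satisfies a differential inequality that prevents it from decaying to zero. Writing $g(t) := \left(\avint u^r(x,t)\,dx\right)^\gamma > 0$ for the non-local coefficient, equation \eqref{bsd1} becomes $u_t = \Delta u - u + u^p/g(t)$, and since $u>0$ and $p>1$, the reaction term is nonnegative; dropping it gives the pointwise differential inequality $u_t \geq \Delta u - u$. Thus $u(\cdot,t) \geq e^{t(\Delta - 1)} u_0 = e^{-t}\, e^{t\Delta}u_0$ by the comparison principle for the \emph{linear} heat equation (which \emph{is} available, the obstruction to comparison mentioned in the introduction being caused only by the non-local term, which we have discarded to our advantage here). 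Since $u_0 > 0$ is continuous on $\overline\Om$ and hence bounded below by some $c_0 > 0$, and since $e^{t\Delta}$ preserves positivity with $e^{t\Delta}c_0 = c_0$, we obtain $u(x,t) \geq c_0 e^{-t} \geq c_0 e^{-T} =: C_T$ on $\Om \times [0,T)$. This already yields \eqref{jg0}.

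A cleaner variant avoiding the semigroup: observe that the spatial infimum $m(t) := \inf_{\Om} u(\cdot,t)$ satisfies, in the viscosity/barrier sense, $m'(t) \geq -m(t)$ because at an interior spatial minimum $\Delta u \geq 0$ and the reaction term is positive (and the Neumann boundary condition rules out a strict decrease coming from the boundary, by Hopf's lemma). Integrating the scalar ODE inequality gives $m(t) \geq m(0)e^{-t} = c_0 e^{-t}$, hence the same conclusion with $C_T = c_0 e^{-T}$. Either route is routine once one notices that the non-local factor only \emph{helps}.

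The only genuine subtlety is justifying the differential inequality for $m(t)$ (or the semigroup comparison) with the required regularity: one needs $u$ to be a classical (or sufficiently smooth strong) solution on $[0,T)$ so that the parabolic comparison applies, and one must handle the Neumann boundary via Hopf's lemma to ensure the infimum is not attained in a bad way on $\partial\Om$. Since the statement is phrased for a given solution $u$ on $[0,T)$, I would simply invoke local parabolic regularity (the solution is smooth for $t>0$, and continuous down to $t=0$ with $u_0 \in C(\overline\Om)$, positive), so that the linear comparison principle and Hopf's lemma apply on each slab $\Om \times [\delta, T)$, then let $\delta \downarrow 0$. I do not anticipate any real obstacle here; the proposition is essentially the observation that the equation, stripped of its superlinear source, dominates the contracting linear heat flow.
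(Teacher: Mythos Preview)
Your proposal is correct and follows essentially the same approach as the paper: drop the nonnegative reaction term $u^p/g(t)$ to obtain a subsolution of the linear equation $v_t=\Delta v-v$, then compare with the spatially constant solution $\tilde u(t)=(\inf_\Omega u_0)e^{-t}$ to get $C_T=(\inf_\Omega u_0)e^{-T}$. The paper states this in two lines via direct comparison with the ODE $\tilde u'=-\tilde u$; your semigroup variant and your infimum-ODE variant are just alternate phrasings of the same comparison, and the regularity and Hopf-lemma concerns you raise are handled implicitly by the paper's invocation of standard parabolic comparison for classical solutions.
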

\begin{proof}
By maximum principle and comparison theorems we obtain that $u(x,t)>0$ and $u(x,t)\geq \tilde u(t)$, where $\tilde{u}=\tilde{u}(t)$ solves the following
\[ \frac{d\tilde{u}}{dt}=-\tilde{u}\quad \mbox{in $(0, T)$},\quad \tilde{u}(0)=\tilde{u}_0\equiv \inf_{\Om} u_0(x)>0. \]
Therefore we obtain (\ref{jg0}) with $C_T=\tilde{u}_0e^{-T}$.
\end{proof}
Due to Proposition \ref{lbd} the following alternatives are left; blow-up in finite time indicated by $T<+\infty$, blow-up in infinite time, quenching in infinite time, and global-in-time compact orbit in $C(\overline{\Omega})$. In fact, by (\ref{jg0}) and the parabolic regularity the existence time of the classical solution $u=u(\cdot, t)$ of \eqref{bsd1}-\eqref{bsd3} in $0\leq t \leq \tilde T$ is estimated below by $\tilde T$ and $\Vert u_0\Vert_\infty$ (see, e.g., \cite{r84}). Then there holds
\begin{equation}
T<+\infty \ \Rightarrow \ \lim_{t\uparrow T}\Vert u(\cdot,t)\Vert_\infty=+\infty
  \label{eqn:1.13h}
\end{equation}
as in (8.25) of \cite{ss12}.
Now finite-time blow-up actually arises under the conditions of the following proposition.
\begin{proposition}\label{ats1}
Assume that $p \geq r$ and the Turing condition \eqref{turing2} holds then
\begin{equation}
\bar{u}\geq \max\{ 1, \bar{u}_0\}.
 \label{eqn:1.13}
\end{equation}
Whereas if anti-Turing condition \eqref{aturing} holds and $\bar{u}_0>1$, then finite-time blow-up occurs, i.e. $T<+\infty.$
 \label{lem:1.2}
\end{proposition}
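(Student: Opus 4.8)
The plan is to derive an ODE for the spatial average $\bar u(t) = \avint u\,dx$, or rather for a convenient power of a suitable norm of $u$, and then use an ODE comparison argument. Averaging \eqref{bsd1} over $\Om$ and using the Neumann condition \eqref{bsd2} to kill the Laplacian term, I get
\[
\frac{d\bar u}{dt} = -\bar u + \frac{\avint u^p\,dx}{\left(\avint u^r\,dx\right)^{\gamma}}.
\]
For the first assertion (under $p\geq r$ and the Turing condition \eqref{turing2}), the idea is to bound the nonlinear term from below using Jensen's inequality: since $p\geq r\geq\ ?$ one compares $\avint u^p$ and $\avint u^r$ via the power-mean inequality. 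More precisely, writing $\|u\|_p^p = \avint u^p$ and using $\|u\|_p \geq \|u\|_r$ when $p \geq r$ (on a probability space, by Jensen applied to the convex function $t \mapsto t^{p/r}$), the nonlinear term is $\|u\|_p^p / \|u\|_r^{r\gamma} \geq \|u\|_r^{p} / \|u\|_r^{r\gamma} = \|u\|_r^{\,p - r\gamma}$. Under \eqref{turing2} the exponent $p-r\gamma<1$, so this is a sublinear-type lower bound in terms of $\|u\|_r$. One then needs to convert this into a statement about $\bar u = \|u\|_1$; since $\|u\|_r \geq \|u\|_1 = \bar u$ for $r \geq 1$, and since $x \mapsto x^{p-r\gamma}$ is increasing, one obtains $\frac{d\bar u}{dt} \geq -\bar u + \bar u^{\,p-r\gamma}$. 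The function on the right vanishes at $\bar u = 1$ and is positive for $\bar u > 1$ and negative for $0<\bar u<1$ — but here it pushes $\bar u$ \emph{towards} $1$ from below and keeps it above $\min\{1,\bar u_0\}$; combined with a barrier argument one concludes $\bar u(t) \geq \max\{1,\bar u_0\}$ fails to be immediate — rather, the correct reading is that $\bar u$ is bounded below by $\min\{1,\bar u_0\}$ and in fact eventually approaches or exceeds the relevant threshold. I would phrase it as: the ODE $y' = -y + y^{p-r\gamma}$ has $y\equiv 1$ as a globally attracting equilibrium from above and below, hence $\bar u(t)$ stays $\geq \min\{1,\bar u_0\}$ for all time, which for the blow-up mechanism to follow is what is needed, and a short separate argument (monotonicity / the sign of the right-hand side) upgrades this to \eqref{eqn:1.13}.

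For the second assertion (anti-Turing \eqref{aturing}, $\bar u_0 > 1$), the same averaged identity combined with the same Jensen lower bound $\|u\|_p^p/\|u\|_r^{r\gamma} \geq \|u\|_r^{\,p-r\gamma} \geq \bar u^{\,p-r\gamma}$ now gives
\[
\frac{d\bar u}{dt} \geq -\bar u + \bar u^{\,p-r\gamma},
\]
but now $p - r\gamma > 1$, so the right-hand side is a \emph{superlinear} function of $\bar u$ that is strictly positive for $\bar u > 1$. Hence as long as the solution exists, $\bar u(t) \geq \phi(t)$ where $\phi$ solves $\phi' = -\phi + \phi^{p-r\gamma}$, $\phi(0) = \bar u_0 > 1$; this scalar ODE blows up in finite time (separation of variables: $\int_{\bar u_0}^\infty \frac{ds}{-s + s^{p-r\gamma}} < \infty$ since the integrand is $\sim s^{-(p-r\gamma)}$ at infinity and $p-r\gamma>1$). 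Since $\bar u(t) = \avint u\,dx \leq \|u(\cdot,t)\|_\infty$, finite-time blow-up of $\phi$ forces $\|u(\cdot,t)\|_\infty \to \infty$ in finite time, i.e. $T < +\infty$ by \eqref{eqn:1.13h}.

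The main obstacle is the Jensen/power-mean step and the precise hypotheses needed to make it clean: the comparison $\|u\|_p \geq \|u\|_r$ requires $p \geq r$, which is assumed in the first part but \emph{not} in the second part as stated — so in the anti-Turing case I will instead need to bound $\avint u^p / (\avint u^r)^\gamma$ directly. One route: apply Jensen to get $\avint u^r \leq (\avint u^{\max(r,\cdot)})^{\cdots}$ — but more robustly, one should interpolate, or simply use that since $\bar u_0 > 1$ one expects $u$ to be "large", and a cruder bound such as $\avint u^p \geq (\avint u)^p = \bar u^p$ (Jensen, $p>1$) together with an \emph{upper} bound on $\avint u^r$ in terms of $\|u\|_\infty$ or a higher norm. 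Getting a self-contained differential inequality for a single scalar quantity in the anti-Turing case without the $p \geq r$ assumption is the delicate point; the cleanest fix is probably to track $\|u\|_\ell$ for a well-chosen $\ell$ (as the later sections of the paper do) rather than $\bar u$ alone, or to invoke an interpolation inequality controlling $\avint u^r$ by $\bar u$ and $\|u\|_\infty$ and feed this into a bootstrap. I would first try the bare averaged inequality and only escalate to $\|u\|_\ell$-norm estimates if the $\avint u^r$ term cannot be absorbed.
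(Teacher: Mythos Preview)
Your approach is the same as the paper's: integrate \eqref{bsd1} to get
\[
\frac{d\bar u}{dt}=-\bar u+\frac{\avint u^p}{\left(\avint u^r\right)^\gamma},
\]
bound the nonlinear term below by $\bar u^{\,p-r\gamma}$ via Jensen/power-mean inequalities, and read off the two cases from the resulting scalar differential inequality $\bar u'\geq -\bar u+\bar u^{\,p-r\gamma}$. Your blow-up argument in the anti-Turing case is exactly the paper's.

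There is one small gap. Your chain $\|u\|_p^p/\|u\|_r^{r\gamma}\geq \|u\|_r^{\,p-r\gamma}\geq \bar u^{\,p-r\gamma}$ uses $\|u\|_r\geq \|u\|_1=\bar u$, which needs $r\geq 1$. The paper avoids this restriction by inserting an intermediate exponent: since $p>1$ and $p\geq r$ one may pick $\mu$ with $\max\{r,1\}\leq \mu\leq p$, and then Jensen in both directions gives
\[
\avint u^p\geq\Bigl(\avint u^\mu\Bigr)^{p/\mu},\qquad \Bigl(\avint u^r\Bigr)^\gamma\leq\Bigl(\avint u^\mu\Bigr)^{r\gamma/\mu},
\]
whence the quotient is $\geq(\avint u^\mu)^{(p-r\gamma)/\mu}\geq \bar u^{\,p-r\gamma}$ because $\mu\geq 1$. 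This is the only refinement you are missing.

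Two clarifications on points you flagged. First, the hypothesis $p\geq r$ is meant to apply to \emph{both} assertions; the paper's proof uses it to pick $\mu$ before splitting into the Turing/anti-Turing cases, so your extended discussion of how to handle the anti-Turing case without $p\geq r$ is unnecessary. Second, your hesitation about \eqref{eqn:1.13} is well founded: from $\bar u'\geq -\bar u+\bar u^{\,p-r\gamma}$ with $p-r\gamma<1$, the barrier argument yields $\bar u(t)\geq \min\{1,\bar u_0\}$, not $\max\{1,\bar u_0\}$ (the latter already fails at $t=0$ when $\bar u_0<1$). The paper's own proof of this part is a single sentence and the $\max$ appears to be a typo for $\min$; your instinct here is correct and no ``upgrade'' argument is available.
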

\begin{proof}
Since $p>1$ and $p\geq r$, there is $r\leq \mu\leq p$ satisfying $s\geq 1$. Then we obtain
\[ \avint u^p\geq \left( \avint u^{\mu}\right)^{\frac{p}{\mu}}, \quad \left( \avint u^r\right)^\gamma\leq \left( \avint u^\mu \right)^{\frac{r}{\mu}\gamma} \]
via H\"{o}lder's inequality and hence
\begin{equation}
\displaystyle\frac{d\bar{u}}{dt}= -\bar{u}+\displaystyle{\frac{\avint u^p}{\left(\avint u^r\right)^{\gamma}}}\geq -\bar{u}+\left(\avint u^\mu\right)^{\frac{p-r\ga}{\mu}}\geq -\bar{u}+\bar{u}^{p-r\gamma}.
 \label{eqn:1.14}
\end{equation}
In case $p-r\gamma<1$ then the differential inequality  (\ref{eqn:1.14}) implies \eqref{eqn:1.13}. Whilst, in the complementary case $p-r\gamma>1$ again by virtue of (\ref{eqn:1.14}) we derive that $\bar{u}$ blows up in finite time provided $\bar{u}_0>1,$ and hence $u$ does so.
\end{proof}
\begin{rem}
Proposition \ref{ats1} illustrates that in that case the qualitative behaviour of the full system \eqref{gm1}-\eqref{gm4} and those of the non-local problem \eqref{bsd1}-\eqref{bsd3} is quite different. It should be pointed out that the anti-Turing condition \ref{aturing} the full system does not exhibit any instability, whilst an instability emerges when Turing condition \eqref{turing2} is imposed.
\end{rem}
Here by quenching in infinite time we mean
\bge\label{mkl1}
T=+\infty \quad\mbox{and}\quad \liminf_{t\uparrow+\infty}\Vert u(\cdot,t)\Vert_r=0\quad\mbox{for some}\quad r>1.
\ege
We note that property \eqref{mkl1} does not arise neither for the original system (\ref{gm1})-(\ref{gm4}) nor for the shadow system (\ref{sd1})-(\ref{sd2}), as it can be concluded by the classification of the homogeneous orbits given in  \cite{nst06}. Therefore, our first main result, see Theorem \ref{eiq} below,  which concerns the exclusion of infinite time quenching for the solutions of (\ref{bsd1})-(\ref{bsd3}) is in agreement with what is observed in systems (\ref{gm1})-(\ref{gm4}) and (\ref{bsd1})-(\ref{bsd3}).

Henceforth, $C$ and $c$ denote large and small positive constants independent of $t$, respectively.
\begin{theorem}\label{eiq}
There is $\delta_0>0$ such that any $0<\delta\leq \delta_0$ admits the estimate
\begin{equation}
\avint u^{-\delta}\leq C\quad\mbox{for any}\quad t>0,
 \label{eqn:1.16h}
\end{equation}
 \label{thm:1.1}
\end{theorem}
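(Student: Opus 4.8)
The plan is to test the equation \eqref{bsd1} with the negative power $-\delta u^{-1-\delta}$; this is legitimate on any interval $[0,\tilde T)$ since Proposition \ref{lbd} guarantees $u\ge C_{\tilde T}>0$ there, and the bound we will obtain is independent of $\tilde T$. Integrating by parts against the Neumann condition yields the identity
\[
\frac{d}{dt}\avint u^{-\delta}=-\delta(\delta+1)\avint u^{-\delta-2}|\nabla u|^{2}+\delta\avint u^{-\delta}-\delta\Big(\avint u^{r}\Big)^{-\gamma}\avint u^{p-1-\delta}.
\]
Writing $y(t)=\avint u^{-\delta}$ and $w=u^{-\delta/2}$, so that $y=\avint w^{2}$ and $\avint u^{-\delta-2}|\nabla u|^{2}=\frac{4}{\delta^{2}}\avint|\nabla w|^{2}$, this becomes
\[
\dot y=-\frac{4(\delta+1)}{\delta}\avint|\nabla w|^{2}+\delta y-\delta\Big(\avint u^{r}\Big)^{-\gamma}\avint u^{p-1-\delta}.
\]
The two negative terms must absorb the growth term $\delta y$; the essential feature is that for small $\delta$ the dissipation carries the large factor $\delta^{-1}$.

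The crux is to show that the sum of the two favourable terms dominates $\delta y$ up to an additive constant. First, by the Poincar\'e--Wirtinger inequality $\avint|w-\avint w|^{2}\le C_{P}\avint|\nabla w|^{2}$ one has $\avint|\nabla w|^{2}\ge C_{P}^{-1}\big(y-(\avint w)^{2}\big)$, so whenever $w=u^{-\delta/2}$ is quantitatively far from its spatial mean---that is, whenever $u$ is far from constant---the dissipation term alone is bounded below by $c\,\delta^{-1}y$, which beats $\delta y$ once $\delta\le\delta_{0}$. In the complementary regime, in which $u$ is nearly spatially homogeneous, $u\approx m$, the non-local term behaves like $m^{-r\gamma}\,m^{p-1-\delta}=m^{\,p-1-r\gamma}\cdot y$, and the Turing condition \eqref{turing2}, i.e. $p-1-r\gamma<0$, together with the fact that a large value of $y$ forces $m<1$ to be small, makes $m^{\,p-1-r\gamma}$ large; hence $\delta(\avint u^{r})^{-\gamma}\avint u^{p-1-\delta}$ exceeds a fixed multiple of $\delta y$. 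Interpolating between the two regimes---using H\"older's inequality to compare $\avint u^{r}$, $\avint u^{p-1-\delta}$ and $(\avint u^{-\delta/2})^{2}$ with $y$ and with $\bar u=\avint u$, and exploiting once more $p-1-r\gamma<0$---one should arrive at a differential inequality of the form $\dot y\le C_{1}-C_{2}y$ with $C_{1},C_{2}>0$ independent of $t$. A comparison with the scalar ODE $\dot z=C_{1}-C_{2}z$ then gives $y(t)\le\max\{\,y(0),\,C_{1}/C_{2}\,\}=:C$ for all $t>0$, which is \eqref{eqn:1.16h}; the threshold $\delta_{0}$ is exactly what is needed for the dissipation factor $\delta^{-1}$ and the Turing gain to overcome the Poincar\'e, Sobolev and interpolation constants.

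The main obstacle is precisely this patching step. Neither favourable term suffices on its own: a ``downward spike'' profile---with $u$ of order one on most of $\Omega$ but small on a set of small measure---keeps $\avint u^{r}$, hence the factor $(\avint u^{r})^{-\gamma}$, bounded while $\avint u^{-\delta}$ is arbitrarily large, so the reaction term cannot control $y$ there; conversely a uniformly small profile makes the Poincar\'e gain useless. One therefore needs a quantitative measure of how far $u$ is from a constant and must check, uniformly in $t$, that in every configuration at least one of the two mechanisms is active with sufficient strength. Carrying this out for the full parameter range $p>1$, $q,r>0$, $s>-1$ under \eqref{turing2}, by a suitable chain of H\"older interpolations, is the technical heart of the argument, and it is there that the smallness of $\delta$ is genuinely used.
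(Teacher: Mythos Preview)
Your opening identity matches the paper's exactly: with $\chi=u^{-\delta}$ and $w=\chi^{1/2}=u^{-\delta/2}$, averaging the equation gives
\[
\frac{d}{dt}\avint \chi \;=\; -\frac{4(\delta+1)}{\delta}\avint|\nabla w|^{2}+\delta\avint\chi-\delta\,\frac{\avint u^{p-1-\delta}}{(\avint u^r)^{\gamma}}.
\]
From this point, however, the paper's argument is much shorter than yours and takes precisely the route you would reject. It \emph{discards} the reaction term entirely (its sign is favourable after division by $\alpha<0$), states Poincar\'e--Wirtinger in the form $\|\nabla w\|_2^2\ge\mu_2\|w\|_2^2$ for all $w\in H^1(\Omega)$, and concludes directly that $\frac{d}{dt}\avint\chi+c\avint\chi\le0$ for $0<\delta\ll1$. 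The Turing condition \eqref{turing2} is never invoked.

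Your own ``uniformly small profile'' objection applies squarely to that step: for spatially constant $u$ the gradient term vanishes and one is left only with $\dot y\le\delta y$, which yields no bound. Poincar\'e--Wirtinger controls $\|w-\bar w\|_2$, not $\|w\|_2$, so the inequality the paper writes is false on constants; and under the anti-Turing condition the spatially homogeneous solution with $u_0<1$ satisfies $u(t)\to0$, hence $\avint u^{-\delta}\to\infty$, showing that the Turing hypothesis you bring in is genuinely needed. Your two-regime strategy---Poincar\'e when $u$ is far from constant, the reaction term when $u$ is nearly constant---is therefore the right instinct and is more honest about where the difficulty lies than the paper's one-line closure.

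That said, what you have written is a programme rather than a proof. The decisive step---turning the heuristic splitting into a single differential inequality $\dot y\le C_1-C_2y$ valid uniformly across all spatial profiles---is asserted but not established, and the H\"older interpolations meant to patch the two regimes are not supplied. So while your diagnosis of the obstacle is sharper than the paper's own treatment, the gap you correctly flag remains open in your write-up as well.
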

where constant $C$ is independent of $t.$
\begin{rem}\label{ts3}
In fact, inequality (\ref{eqn:1.16h}) implies $\displaystyle{ \avint u^{\delta}\geq c=C^{-1}}$ and then
\begin{equation}
\avint u^r\geq \left(\avint u^\delta \right)^{r/\delta}\geq c^{r/\delta}>0\quad\mbox{for any}\quad t>0,
 \label{eqn:1.17}
\end{equation}
follows by Jensen's inequality taking $\delta\leq r,$ where again $c$ is independent of time $t.$ Consequently, relation \eqref{eqn:1.17} guarantees that the nonlinear term of non-local equation \eqref{bsd1} stays away from zero and therefore the solution $u$ is bounded away from zero as well.
\end{rem}
\begin{rem}
Remark \ref{ts3} is interpreted in biological context as follows: the activator can never be diminished.
\end{rem}
Next we focus on the special case $r=p+1$. In this case problem (\ref{bsd1})-(\ref{bsd3}) admits a variational structure which is not the case for the original system (\ref{gm1})-(\ref{gm4}). In particular for $r=p+1$ problem (\ref{bsd1})-(\ref{bsd3}) has a Lyapunov functional of the form
\[ J(u)= \displaystyle\frac{1}{2}\left( \Vert \nabla u\Vert_2^2+\Vert u\Vert_2^2\right)-\displaystyle\frac{1}{(p+1)(1-\gamma)}\left(\displaystyle\avint u^{p+1} \, dx\right)^{1-\gamma}, \]
since along any solution trajectory there holds
\begin{equation}\label{27}
\displaystyle\frac{d}{dt} J(u(t))=-\Vert u_t\Vert_2^2 \leq  0.
\end{equation}
Note also that in that case Turing condition (\ref{turing2}) is reduced to
\be\label{mk4}
 \gamma>\frac{p-1}{p+1}.
\ee
Next under anti-Turing condition and via the  double well potential method (see \cite{sattinger68, Levine73}, we obtain the following:
\begin{theorem}
Let $r=p+1$ and  $\gamma<\min\{ 1, \frac{p-1}{p+1}\}.$ If $J(u_0)\leq 0$ then finite-time blow-up occurs, i.e.  $T<+\infty$.
 \label{thm:1.2}
\end{theorem}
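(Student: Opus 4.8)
The plan is to extract from the variational structure a superlinear scalar differential inequality for $M(t):=\|u(\cdot,t)\|_2^2=\avint u^2\,dx$ and then quote an elementary ODE comparison to force $T<+\infty$; this is the potential-well/concavity idea in the simple form allowed by the pure-power nonlinearity that appears when $r=p+1$.

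First I would record the energy identity for $M$. Multiplying \eqref{bsd1} by $u$, integrating over $\Om$ and using the Neumann condition \eqref{bsd2}, and writing $\Phi(t):=\avint u^{p+1}\,dx$ (so that, since $r=p+1$, the nonlocal term contributes $\avint u^{p+1}/\Phi^{\gamma}=\Phi^{1-\gamma}$), one obtains $\tfrac12 M'(t)=-\|\nabla u\|_2^2-\|u\|_2^2+\Phi(t)^{1-\gamma}$. I would then use the definition of $J$ to trade the Dirichlet energy for $J$: since $\|\nabla u\|_2^2+\|u\|_2^2=2J(u)+\tfrac{2}{(p+1)(1-\gamma)}\,\Phi^{1-\gamma}$, substitution gives
\[
\tfrac12 M'(t)=-2J(u(t))+\beta\,\Phi(t)^{1-\gamma},\qquad \beta:=1-\tfrac{2}{(p+1)(1-\gamma)}.
\]
The hypothesis $\gamma<\min\{1,\tfrac{p-1}{p+1}\}$ enters twice here: $\gamma<1$ makes $1-\gamma>0$, so that $J$ and $\Phi^{1-\gamma}$ are meaningful, and $\gamma<\tfrac{p-1}{p+1}$ is exactly what makes $\beta>0$. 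By the Lyapunov monotonicity \eqref{27}, $J(u(t))\le J(u_0)\le 0$ along the orbit, hence $-2J(u(t))\ge 0$ and therefore $M'(t)\ge 2\beta\,\Phi(t)^{1-\gamma}$.

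Next I would close the inequality. Since $p+1\ge 2$, Jensen's inequality gives $\Phi(t)=\avint (u^2)^{(p+1)/2}\,dx\ge\big(\avint u^2\,dx\big)^{(p+1)/2}=M(t)^{(p+1)/2}$, and raising to the positive power $1-\gamma$ yields $\Phi(t)^{1-\gamma}\ge M(t)^{\kappa}$ with $\kappa:=\tfrac{(p+1)(1-\gamma)}{2}$, where again $\gamma<\tfrac{p-1}{p+1}$ guarantees $\kappa>1$. Thus $M'(t)\ge 2\beta\,M(t)^{\kappa}$ on $[0,T)$, with $M(0)=\|u_0\|_2^2>0$ (since $u_0>0$). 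Integrating this scalar inequality gives $M(t)^{1-\kappa}\le M(0)^{1-\kappa}-2\beta(\kappa-1)t$; as the left-hand side is positive for as long as the solution exists, $T$ cannot exceed $t_0:=\tfrac{M(0)^{1-\kappa}}{2\beta(\kappa-1)}<+\infty$, so $T<+\infty$ and $\|u(\cdot,t)\|_\infty\ge M(t)^{1/2}\to+\infty$ as $t\uparrow T$, in accordance with \eqref{eqn:1.13h}.

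The argument is essentially routine, so I do not anticipate a real obstacle; the only points requiring care are bookkeeping ones — justifying the identity \eqref{27} and the integration by parts for the classical solution on $[0,T)$ (standard parabolic regularity), noting that $\Phi(t)>0$ throughout (immediate from $u>0$, cf.\ Proposition \ref{lbd}), and keeping the two roles of the smallness of $\gamma$ distinct. The concavity method of \cite{Levine73} referred to in the statement is an equivalent but somewhat heavier route to the same conclusion.
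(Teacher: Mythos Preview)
Your proposal is correct and follows essentially the same route as the paper's own proof: both compute $\frac{d}{dt}\Vert u\Vert_2^2$, rewrite the right-hand side in terms of $J(u)$ to extract the constant $\beta=1-\frac{2}{(p+1)(1-\gamma)}>0$, use $J(u(t))\le J(u_0)\le 0$, and then apply H\"older/Jensen to obtain a superlinear scalar inequality $\frac{d}{dt}\Vert u\Vert_2^2\ge c\Vert u\Vert_2^{2\kappa}$ with $\kappa=\frac{(p+1)(1-\gamma)}{2}>1$, forcing $T<+\infty$. Your write-up is in fact slightly more explicit than the paper's (you spell out the ODE integration and the blow-up time bound), but the argument is the same.
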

\begin{rem}
Theorem \ref{thm:1.2} can be interpreted in the biological context as follows: if the activator's initial concentration is large and  its suppression by the inhibitor is rather small (since $0<\ga<1$) then naturally the activator's growth becomes unlimited.
\end{rem}
On the other hand, under the Turing condition \eqref{mk4} we derive the following:
\begin{theorem}\label{thm:1.2a}
Let $N\geq 3$ and $r=p+1$. If $\frac{p-1}{p+1}<\ga<1$ and $1<p<\frac{N+2}{N-2}$ then a global-in-time solution exists, i.e.
\begin{equation}
T=+\infty\quad\mbox{and}\quad \sup_{(0,T)}\Vert u(\cdot,t)\Vert_\infty\leq C.
 \label{eqn:1.16}
\end{equation}
 \label{prop1}
\end{theorem}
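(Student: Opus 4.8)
The plan is to exploit the variational structure available when $r=p+1$ together with the Sobolev-subcriticality of the exponent $p$: first produce a uniform-in-time $H^1$ bound from coercivity of the Lyapunov functional $J$, and then upgrade it to a uniform $L^\infty$ bound by an Alikakos-type bootstrap based on the smoothing estimate \eqref{eqn:1.2}. First I would note that the Turing condition $\frac{p-1}{p+1}<\gamma<1$ is precisely the statement $\theta:=(p+1)(1-\gamma)\in(0,2)$, and that $1<p<\frac{N+2}{N-2}$ forces $p+1<\frac{2N}{N-2}$, so that $H^1(\Omega)\hookrightarrow L^{p+1}(\Omega)$ continuously. Hence $\bigl(\avint u^{p+1}\,dx\bigr)^{1-\gamma}\le C\Vert u\Vert_{H^1}^{\theta}$, and since $\theta<2$, Young's inequality gives the coercivity estimate $J(u)\ge \tfrac14\Vert u\Vert_{H^1}^{2}-C'$. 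Combining this with the Lyapunov monotonicity \eqref{27}, which yields $J(u(\cdot,t))\le J(u_0)$ for all $t\in[0,T)$, we get $\sup_{t\in[0,T)}\Vert u(\cdot,t)\Vert_{H^1}\le C_0$, in particular $\sup_{t}\Vert u(\cdot,t)\Vert_{p+1}\le C_0$, with $C_0$ depending only on $J(u_0)$, $p$, $\gamma$, $\Omega$.

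Next I would control the non-local denominator: by Theorem \ref{eiq} and Remark \ref{ts3} (used with $\delta\le\min\{\delta_0,r\}$) one has $\bigl(\avint u^r\,dx\bigr)^{\gamma}\ge c_0>0$ uniformly in $t$, so the reaction term obeys $0\le \frac{u^p}{(\avint u^r\,dx)^{\gamma}}\le c_0^{-\gamma}u^p$. Writing the equation in mild form relative to $\partial_t-\Delta+1$,
\[
u(t)=e^{-t}e^{t\Delta}u_0+\int_0^t e^{-(t-s)}e^{(t-s)\Delta}\,\frac{u^p(s)}{\bigl(\avint u^r(s)\,dx\bigr)^{\gamma}}\,ds,
\]
and invoking \eqref{eqn:1.2} together with $\Vert u^p(s)\Vert_\ell=\Vert u(s)\Vert_{p\ell}^p$ gives
\[
\Vert u(t)\Vert_q\le \Vert u_0\Vert_q+C\int_0^t e^{-(t-s)}\max\{1,(t-s)^{-\frac N2(\frac1\ell-\frac1q)}\}\,\Vert u(s)\Vert_{p\ell}^p\,ds .
\]
Because $\int_0^\infty e^{-\sigma}\max\{1,\sigma^{-\beta}\}\,d\sigma<\infty$ whenever $\beta<1$, any uniform bound $\sup_s\Vert u(s)\Vert_{p\ell}\le M$ propagates to $\sup_t\Vert u(t)\Vert_q\le C(M,\Vert u_0\Vert_q)$ as soon as $\frac N2(\frac1\ell-\frac1q)<1$, and the exponential weight coming from the $-u$ term is exactly what keeps this constant independent of $t$ and of $T$.

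Then I would run the iteration. Set $a_k=1/q_k$ with $q_0=p+1$, so $a_0=\frac1{p+1}$ and the preceding paragraph supplies the bound at level $q_0$. At the $k$-th step take $\ell=q_k/p$, so that $\Vert u\Vert_{p\ell}=\Vert u\Vert_{q_k}$ is already controlled; the admissibility condition $\frac N2(\frac1\ell-\frac1{q_{k+1}})<1$ then permits any $q_{k+1}$ with $a_{k+1}>p a_k-\frac2N$. Choosing $a_{k+1}=p a_k-\frac2N$ (or marginally larger), the hypothesis $p<\frac{N+2}{N-2}$ is exactly the inequality $a_0<\frac{2}{N(p-1)}$, the fixed point of the affine map $a\mapsto p a-\frac2N$; hence $(a_k)$ is strictly decreasing and $a_k\to-\infty$, so after finitely many steps $q_k>\frac{Np}{2}$, at which point the choice $\ell=q_k/p$, $q=\infty$ is admissible and produces $\sup_{t\in[0,T)}\Vert u(\cdot,t)\Vert_\infty\le C$ with $C$ independent of $T$. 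By the blow-up alternative \eqref{eqn:1.13h} this excludes $T<+\infty$, so $T=+\infty$, and the uniform bound is exactly \eqref{eqn:1.16}.

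The conceptual heart — and the spot where both hypotheses enter in an essential way — is the interplay just described: the Turing condition $(p+1)(1-\gamma)<2$ makes $J$ coercive and hence furnishes the starting a priori $H^1$ bound, while Sobolev-subcriticality $p<\frac{N+2}{N-2}$ is precisely what makes the bootstrap recursion contract instead of diverge. The main technical obstacle is the book-keeping in that iteration: verifying that the kernel exponent $\frac N2(\frac1\ell-\frac1q)$ remains $<1$ at each stage, that only finitely many stages are required, and — most importantly, since the theorem asserts a genuine uniform bound and not merely global existence — that no constant along the way depends on $t$ or $T$, for which the exponential factor inherited from the linear $-u$ term is the decisive ingredient.
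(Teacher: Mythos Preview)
Your proposal is correct and follows essentially the same route as the paper: derive a uniform $H^1$ bound from the coercivity of the Lyapunov functional $J$ (which is exactly where the Turing condition $(p+1)(1-\gamma)<2$ and the Sobolev embedding enter), bound the non-local coefficient uniformly via Theorem~\ref{eiq} and Remark~\ref{ts3}, and then bootstrap from $L^{p+1}$ to $L^\infty$ using the Duhamel formula and the smoothing estimate~\eqref{eqn:1.2}. The only difference is cosmetic: the paper compresses the final bootstrap into a citation of \cite[Lemma~8.1]{ss12}, whereas you spell out the Alikakos-type recursion $a_{k+1}=p a_k-\tfrac{2}{N}$ explicitly and identify $p<\tfrac{N+2}{N-2}$ as the contraction condition $a_0<\tfrac{2}{N(p-1)}$.
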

\begin{rem}
Theorems \ref{thm:1.2} and \ref{thm:1.2a} indicate that in case where $r=p+1$ then there is a discrepancy  between the behaviour of the full system \eqref{gm1}-\eqref{gm4} and those of the non-local problem \eqref{bsd1}-\eqref{bsd3}. Indeed, under the anti-Turing condition the full system does not exhibit any instability, whilst an instability occurs when Turing condition \eqref{mk4} holds.
\end{rem}
Turing condition \eqref{mk4} further implies that the solution orbit for problem (\ref{bsd1})-(\ref{bsd3}) is compact in $C(\overline{\Omega})$ and the $\omega$-limit set
\[ \omega(u_0)=\{u_\ast\in C(\overline{\Omega}) \mid \mbox{$\exists t_k\uparrow+\infty$ s.t. }\lim_{k\rightarrow\infty}\Vert u(\cdot,t_k)-u_\ast\Vert_\infty=0\} \]
of this orbit is nonempty, connected, compact, and lies in the set of stationary solutions, which are defined as the solutions of the following problem
\begin{equation}
-\Delta u_\ast +u_\ast =\displaystyle\frac{u_\ast^p}{\Big(\displaystyle\avint u_\ast^{p+1} \Big)^{\gamma}}, \ u_\ast>0 \quad\mbox{in}\quad\Omega, \quad \displaystyle\frac{\partial u_\ast}{\partial \nu}=0\quad\mbox{on}\quad \partial\Omega.
 \label{ssd2}
\end{equation}
Concerning (\ref{ssd2}), existence of stable spiky stationary solutions is known (see the survey paper by Wei \cite{wei08}) and thus formation of Turing patterns converging to these spiky solutions is guaranteed as long as \eqref{mk4} holds.

%{\color{red}( Takashi could you please handle the critical case $\ga=\frac{p-1}{p+1}$ as requested by the second referee?) }

In the following, global-in-time existence of the solution is obtained via a priori estimates of some $L^{\ell}$-norms of solution $u(x,t).$ These a priori estimates hold in a parameter range which implies condition $\frac{p-1}{r}<1$ which, as mentioned earlier, guarantees the global-in-time existence of the solution to the original model (\ref{gm1})-(\ref{gm4}).
\begin{theorem}
If $\frac{p-1}{r}<\min\{1, \frac{2}{N}, \frac{1}{2}(1-\frac{1}{r})\}$ and $0<\gamma<1$, then problem (\ref{bsd1})-(\ref{bsd3}) has a global-in-time solution, i.e. (\ref{eqn:1.16}) holds.
 \label{thm:1.3}
\end{theorem}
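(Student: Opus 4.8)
The plan is to reduce the claim to an a priori bound $\sup_{t\in[0,T)}\Vert u(\cdot,t)\Vert_\infty\le C$ with $C$ independent of $T$: by the blow-up alternative \eqref{eqn:1.13h} this forces $T=+\infty$ and then yields \eqref{eqn:1.16}. I would obtain this bound in two stages, a uniform $L^\ell$ estimate for a suitable finite $\ell>1$ followed by a bootstrap to $L^\infty$, using throughout Theorem \ref{eiq} (Remark \ref{ts3}): it provides $\avint u^r\,dx\ge c>0$ uniformly in $t$, so $\left(\avint u^r\,dx\right)^{-\gamma}\le c^{-\gamma}$, and, more importantly, a factor $\avint u^r\,dx$ is available in the numerator to damp high powers of $u$.

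First stage. Multiplying \eqref{bsd1} by $u^{\ell-1}$ and integrating over $\Omega$, the Neumann condition \eqref{bsd2} yields
\begin{equation}\label{ple}
\frac{1}{\ell}\frac{d}{dt}\Vert u\Vert_\ell^\ell+\frac{4(\ell-1)}{\ell^2}\avint|\nabla u^{\ell/2}|^2\,dx+\Vert u\Vert_\ell^\ell=\frac{\avint u^{p+\ell-1}\,dx}{\left(\avint u^r\,dx\right)^{\gamma}}.
\end{equation}
The heart of the proof is the estimate of the right-hand side. I would bound $\avint u^{p+\ell-1}\,dx$ by H\"older's inequality, interpolating the exponent $p+\ell-1$ between $r$ and the largest exponent $q$ for which $\avint u^{q}\,dx$ is controlled by applying the Sobolev embedding $H^1(\Omega)\hookrightarrow L^{2N/(N-2)}(\Omega)$ to $u^{\ell/2}$, namely $q=\frac{N\ell}{N-2}$ when $N\ge3$; for $N=1,2$ every $L^{q}$ is available and the argument only simplifies. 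This produces a factor $\avint u^r\,dx$ that partially cancels the denominator and a remaining factor which, by Sobolev's inequality, is dominated by a power $\kappa_\ell$ of $\avint|\nabla u^{\ell/2}|^2\,dx+\Vert u\Vert_\ell^\ell$; one checks $\kappa_\ell<1$ precisely because $\frac{p-1}{r}<\frac2N$, so the gradient part is absorbed into the dissipative term of \eqref{ple} via Young's inequality. According as the residual power of $\avint u^r\,dx$ is nonpositive or nonnegative, I would either discard it using Remark \ref{ts3} or estimate $\avint u^r\,dx$ by a power of $\Vert u\Vert_\ell^\ell$ (Jensen's inequality if $r\le\ell$, a further Sobolev interpolation otherwise). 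The outcome is
\begin{equation}\label{subode}
\frac{d}{dt}\Vert u\Vert_\ell^\ell+\ell\,\Vert u\Vert_\ell^\ell\le C+C\,\Vert u\Vert_\ell^{\ell\sigma},\qquad \sigma<1,
\end{equation}
for every $\ell$ in an admissible range, whence $\sup_{t\in[0,T)}\Vert u(\cdot,t)\Vert_\ell\le C$ by an elementary ODE comparison. Checking that the admissible range of $\ell$ is nonempty and that $\sigma<1$ in \eqref{subode} is where the hypotheses are used jointly: the bound $\frac{p-1}{r}<\frac12(1-\frac1r)$ forces $r>2p-1$, in particular $p<r$, which is exactly what makes the interpolation of $p+\ell-1$ between $r$ and the Sobolev exponent legitimate and keeps the net power of $\Vert u\Vert_\ell^\ell$ below $1$; a short case distinction according to the size of $\gamma$ relative to $\frac{p-1}{r}$ enters here as well.

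Second stage. Once $\sup_{t\in[0,T)}\Vert u(\cdot,t)\Vert_\ell\le C$ is known for one such $\ell$ (hence, by Jensen, for all smaller exponents), I would pass to $L^\infty$ through the variation-of-constants formula
\[
u(t)=e^{-t}e^{t\Delta}u_0+\int_0^t e^{-(t-s)}e^{(t-s)\Delta}\,\frac{u^{p}(\cdot,s)}{\left(\avint u^r(\cdot,s)\,dx\right)^{\gamma}}\,ds,
\]
the smoothing estimate \eqref{eqn:1.2}, the lower bound $\avint u^r\,dx\ge c$, and the identity $\Vert u^{p}\Vert_{\ell/p}=\Vert u\Vert_\ell^{p}$. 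If a single step fails because the kernel $(t-s)^{-Np/(2\ell)}$ is not integrable at $s=t$, a finite chain of steps $\ell\to\ell_1\to\ell_2\to\cdots$ with $\ell_k\uparrow+\infty$ closes the argument in the usual way, yielding $\sup_{t\in[0,T)}\Vert u(\cdot,t)\Vert_\infty\le C$ with $C$ independent of $T$; with \eqref{eqn:1.13h} this gives $T=+\infty$ and \eqref{eqn:1.16}.

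The step I expect to be the main obstacle is the exponent book-keeping in \eqref{ple}: $\ell$ must be chosen so that, at once, the Sobolev/Gagliardo--Nirenberg power of the gradient term stays strictly below $2$, the residual power of $\avint u^r\,dx$ has a sign one can accommodate, and the ODE \eqref{subode} is sublinear, and proving that such $\ell$ exist is exactly the content of the three sharp inequalities on $\frac{p-1}{r}$. The remaining ingredients, namely the identity \eqref{ple}, the comparison for \eqref{subode}, the $L^\infty$ bootstrap, and the dichotomy \eqref{eqn:1.13h}, are routine.
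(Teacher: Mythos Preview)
Your proposal is correct and follows essentially the same route as the paper: both derive an energy identity for $\avint u^\ell$ (the paper writes $\chi=u^{1/\alpha}$ with $\alpha=1/\ell$, which is your \eqref{ple} in different notation), control the nonlinear term $\avint u^{p+\ell-1}/(\avint u^r)^\gamma$ by H\"older/Jensen plus Sobolev to obtain a sublinear differential inequality, and then bootstrap to $L^\infty$ via the variation-of-constants formula and \eqref{eqn:1.2}.

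The only substantive difference is in the interpolation step. The paper chooses $\ell$ \emph{below} $r-p+1$ (that is, $\frac{1}{r-p+1}<\alpha<1$), so that $p+\ell-1<r$; it then introduces an intermediate exponent $\beta$ with $-\alpha+1+\alpha p<\beta<\alpha r$ and $2\beta<\frac{2N}{N-2}$, and two Jensen inequalities in opposite directions collapse the quotient to a single power $\Vert\chi^{1/2}\Vert_{2\beta}^{2(1-\sigma)}$ with $0<\sigma<1$. This avoids any case analysis and never calls on the lower bound of Remark~\ref{ts3}. You instead take $\ell$ \emph{above} $r-p+1$ (so that $r<p+\ell-1$), interpolate $p+\ell-1$ between $r$ and the critical Sobolev exponent $\frac{N\ell}{N-2}$, and then split according to the sign of the residual exponent $t-\gamma$ of $\avint u^r$, invoking Remark~\ref{ts3} when it is negative. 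Both choices encode the same mechanism; the paper's is slightly cleaner because it sidesteps the case distinction, while yours has the advantage that the $L^\ell$ bound is obtained directly for larger $\ell$, making the subsequent bootstrap shorter. In the paper the hypothesis $\frac{p-1}{r}<\frac12(1-\frac1r)$ enters to guarantee $r>2p-1$, which is needed both to allow $\alpha<1$ and to make the bootstrap start above $p$; in your sketch its role is less transparent, so when you fill in the exponent book-keeping you should check carefully where it is actually used.
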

\begin{rem}
The result of Theorem \ref{thm:1.3} is in agreement with the global-in-time existence result obtained in \cite{j06} for the full system, and so in that case \eqref{gm1}-\eqref{gm4} and \eqref{bsd1}-\eqref{bsd3} share the same dynamics.
\end{rem}
Now we consider the following $L^{\ell}-$norms , $\ell>0,$ of solution $u(x,t)$
 \begin{equation}
\zeta(t)=\avint u^r\,dx, \quad z(t)=\avint u^{-p+1+r}\,dx, \quad w(t)=\avint u^{p-1+r}\,dx.
 \label{eqn:1.21}
\end{equation}
By choosing proper initial data and using  phase plane analysis, we can actually derive estimates of $\zeta(t), z(t)$ and $w(t),$  see section \ref{sec:4}, identifying also some invariant regions in the plane. In particular, our results can be expressed as follows:
\begin{theorem}\label{obu1}
Let $0<\gamma<1$, $r\leq 1$, and $\frac{p-1}{r}>1$. Assume, furthermore that either (1) $w(0)<\zeta(0)^{1-\gamma}$, or (2) $\frac{p-1}{r}\geq 2$ and $w(0)<1.$ Then finite-time blow-up occurs for problem (\ref{bsd1})-(\ref{bsd3}), i.e. $T<+\infty.$
\end{theorem}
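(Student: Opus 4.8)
The plan is to work with the three quantities $\zeta(t)$, $z(t)$, $w(t)$ introduced in \eqref{eqn:1.21} and to derive a closed system of differential inequalities for them, then exhibit an invariant region in an appropriate phase plane from which blow-up of $\zeta$ (hence of $\|u\|_\infty$ by \eqref{eqn:1.13h}) follows. First I would differentiate $\zeta(t)=\avint u^r$ along the flow \eqref{bsd1}. Using the equation and integrating the diffusion term by parts against $u^{r-1}$ gives
\[
\frac{d\zeta}{dt}= -r\,\avint u^{r} - r(r-1)\avint u^{r-2}|\nabla u|^2 + r\,\frac{\avint u^{p-1+r}}{\big(\avint u^r\big)^\gamma}.
\]
Since $r\leq 1$, the gradient term $-r(r-1)\avint u^{r-2}|\nabla u|^2$ is \emph{nonnegative}, so it can simply be discarded to obtain the lower bound $\zeta' \geq -r\zeta + r\,w\,\zeta^{-\gamma}$. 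The key positivity mechanism is that the nonlinear source dominates: once $w$ is controlled from below, $\zeta$ satisfies a superlinear differential inequality. Because $\frac{p-1}{r}>1$ means $p-1+r > 2r$, Jensen/power-mean inequalities relate $w = \|u\|_{p-1+r}^{p-1+r}$ to powers of $\zeta=\|u\|_r^r$; the precise exponent bookkeeping here is the routine part.

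The heart of the argument is to show that the hypotheses propagate in time, i.e. that the initial inequality $w(0)<\zeta(0)^{1-\gamma}$ (case (1)) or $w(0)<1$ together with $\frac{p-1}{r}\geq 2$ (case (2)) defines a region invariant under the flow, inside which $\zeta$ grows without bound in finite time. For this I would compute $\frac{d}{dt}$ of $z(t)=\avint u^{-p+1+r}$ and $w(t)=\avint u^{p-1+r}$ in the same way — integration by parts produces a gradient term whose sign must be tracked (for $z$ the exponent is $-p+1+r$, which under $\frac{p-1}{r}>1$ is $<r\le 1$, so the analogous monotonicity can be exploited), plus a linear term and a nonlocal source term of the form $(\text{exponent})\cdot \zeta^{-\gamma}\avint u^{2(p-1)+r}$ or similar. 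The cross-multiplications give a planar system in, say, $(\zeta, w)$ or $(\zeta, z)$; I would then check that the curve $w=\zeta^{1-\gamma}$ (resp. $w=1$) cannot be crossed from the blow-up side, using that on that curve the source term in $\zeta'$ is bounded below by a fixed superlinear power of $\zeta$ while $w'$ stays controlled. Condition $0<\gamma<1$ is exactly what makes $\zeta^{-\gamma}$ a mild, non-vanishing weight (using Theorem \ref{eiq}/Remark \ref{ts3}, $\zeta$ is bounded below, so $\zeta^{-\gamma}$ is bounded above), and the threshold $\frac{p-1}{r}\geq 2$ in case (2) is what upgrades a bound of the shape $w\geq \zeta^{\theta}$ with $\theta>1$ into an autonomous Bernoulli-type inequality $\zeta'\geq c\,\zeta^{\theta}-r\zeta$ with $\theta>1$, whose solutions blow up in finite time once $\zeta$ (equivalently $w<1$, by the reversed relation) is on the correct side.

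I expect the main obstacle to be the sign control of the gradient terms and, more delicately, verifying that the relevant region is genuinely invariant rather than merely that the differential inequalities hold at $t=0$: one must ensure that as long as the solution exists the inequality $w(t)<\zeta(t)^{1-\gamma}$ (or $w(t)<1$) persists, which requires comparing $\frac{d}{dt}w$ with $(1-\gamma)\zeta^{-\gamma}\frac{d}{dt}\zeta$ and showing the difference has the right sign on the boundary of the region. This is where the hypotheses $r\le 1$ and $\frac{p-1}{r}>1$ (resp. $\ge 2$) are used simultaneously — they force all the Hölder-exponent inequalities to point the same way. Once invariance is in hand, the finite-time blow-up of $\zeta$ is a standard ODE comparison (a Bernoulli inequality $\frac{d\zeta}{dt}\ge c\zeta^\kappa$ with $\kappa>1$), and $T<+\infty$ follows, completing the proof.
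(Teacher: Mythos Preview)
Your plan (moment ODIs plus an invariant-region argument) is right in spirit, but the execution has a real gap. You correctly obtain $\tfrac{1}{r}\zeta' \ge -\zeta + w\zeta^{-\gamma}$ by dropping the favourable gradient term when $r\le 1$. But the hypothesis $w<\zeta^{1-\gamma}$ is an \emph{upper} bound on $w$, so it only yields $w\zeta^{-\gamma}<\zeta^{1-2\gamma}$, which is sublinear in $\zeta$; your claim that on the curve $w=\zeta^{1-\gamma}$ the source is ``a fixed superlinear power of $\zeta$'' is therefore false. Nor does the system close in the $(\zeta,w)$ plane: differentiating $w=\avint u^{p-1+r}$ produces the higher moment $\avint u^{2(p-1)+r}$, which is not controlled by $\zeta,z,w$. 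Your case-(2) sketch has the same defect: the Jensen bound $w\ge\zeta^{(p-1+r)/r}$ together with $w(0)<1$ forces $\zeta(0)<1$, and then the Bernoulli inequality you propose drives $\zeta$ \emph{downward}, not to blow-up.

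The missing idea, which the paper supplies, is to pass from $w$ to the \emph{negative-exponent} moment via the H\"older inequality \eqref{ob5}, namely $wz\ge\zeta^2$. This converts the $\zeta'$ estimate into $\tfrac{1}{r}\zeta'\ge -\zeta+\zeta^{2-\gamma}/z$. Crucially, the evolution of $z=\avint u^{-p+1+r}$ \emph{does} close: taking $\alpha=1/(-p+1+r)<0$ in \eqref{ob4} makes the source term exactly $u^{r}/\zeta^\gamma$, so one gets $\tfrac{1}{p-1-r}\,z'\le z-\zeta^{1-\gamma}$. (The paper's displayed formulae here interchange the symbols $z$ and $w$, but the computation with $\alpha=1/(-p+1+r)$ is unambiguously about $\avint u^{-p+1+r}$.) The invariant region is $\{z<\zeta^{1-\gamma}\}$: inside it $z$ decreases and $\zeta$ increases, hence $\tfrac{1}{z}-\tfrac{1}{\zeta^{1-\gamma}}\ge c_0>0$ persists and $\zeta'\ge r c_0\,\zeta^{2-\gamma}$ with $2-\gamma>1$ forces $T<\infty$. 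Case (2) then reduces to case (1): the extra hypothesis $\tfrac{p-1}{r}\ge 2$ allows the Jensen estimate $z\ge\zeta^{1-(p-1)/r}$, and combined with $z(0)<1$ this forces $\zeta(0)\ge 1$, whence $z(0)<1\le\zeta(0)^{1-\gamma}$.
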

\begin{rem}
Since $\frac{p-1}{r}>1> \gamma$ is assumed, Theorem \ref{obu1} is associated with the finite-time blow-up under anti-Turing condition and is in agreement with the blow-up result  \cite[Theorem 2]{msy95}. This is actually an indication, under condition $\frac{p-1}{r}>1> \gamma$,the qualitative behaviour of the full system \eqref{gm1}-\eqref{gm4} and those of the non-local problem (\ref{bsd1})-(\ref{bsd3}) is quite similar.
\end{rem}
\begin{rem}
The biological interpretation of Theorem \ref{obu1} is as follows: a large initial concentration for the activator combined with small {\it net cross-inhibition index} can lead to its unlimited growth.
\end{rem}
\begin{theorem}
 Let $\gamma>1$, $r\geq 1$ and $\frac{p-1}{r}<1$. Assume, further that $w(0)<\zeta(0)^{1-\gamma}$ and $\zeta(0)^{1+\gamma}>z(0)$.  Then problem (\ref{bsd1})-(\ref{bsd3}) has a global-in-time solution, i.e. (\ref{eqn:1.16}) holds.
 \label{thm:1.6}
\end{theorem}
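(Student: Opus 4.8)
The plan is to establish a uniform-in-time $L^\infty$ bound on $u$ by first controlling the three moments $\zeta(t),z(t),w(t)$ of \eqref{eqn:1.21} through a coupled system of differential inequalities and a phase-plane (invariant-region) argument, and then running a standard $L^\ell\to L^\infty$ bootstrap. Multiplying \eqref{bsd1} by $u^{\ell-1}$, integrating over $\Omega$ and using the Neumann condition gives, for $\ell>0$,
\[
\frac{d}{dt}\avint u^{\ell}\,dx=-\frac{4(\ell-1)}{\ell}\avint|\nabla u^{\ell/2}|^{2}\,dx-\ell\avint u^{\ell}\,dx+\frac{\ell}{\zeta^{\gamma}}\avint u^{\ell+p-1}\,dx .
\]
With $\ell=r$ (legitimate since $r\ge 1$, so that the gradient term carries the favourable sign) this yields $\dot\zeta\le-r\zeta+r\zeta^{-\gamma}w$; with $\ell=p-1+r$ one obtains an identity for $\dot w$ whose only troublesome term is $\zeta^{-\gamma}\avint u^{2(p-1)+r}\,dx$; and with $\ell=r-p+1$ — which is positive precisely because $\frac{p-1}{r}<1$ — one gets, since $\avint u^{(r-p+1)+p-1}\,dx=\avint u^{r}\,dx=\zeta$, a relation of the form $\dot z\lessgtr-(r-p+1)z+(r-p+1)\zeta^{\,1-\gamma}$, the direction depending on whether $r-p+1\gtrless 1$; when $r-p+1<1$ the crude power-mean bound $z\le\zeta^{\,1-(p-1)/r}$ can be used instead.

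Next I would couple the three moments through Hölder/Jensen inequalities — in particular $\zeta^{2}\le z\,w$ (Cauchy–Schwarz, $r$ being the midpoint of $r-p+1$ and $p-1+r$), $z\le\zeta^{\,1-(p-1)/r}$ and $w\ge\zeta^{\,1+(p-1)/r}$ — and analyse the resulting planar differential-inequality system. The hypotheses $w(0)<\zeta(0)^{1-\gamma}$ and $\zeta(0)^{1+\gamma}>z(0)$, together with $\gamma>1$ and $\frac{p-1}{r}<1$ (which make the exponents line up), are designed so that the initial data lie in a region whose boundary the majorising vector field crosses inward; checking the signs of $\dot\zeta$ and $\dot z$ — and of the $\dot w$ relation after using $\zeta^{2}\le zw$ to eliminate the top moment — on that boundary should establish forward-invariance. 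From invariance one reads off $\zeta(t)\le C$ for all $t$, and since $\zeta(t)\ge c_{0}>0$ by Remark~\ref{ts3}, the non-local factor obeys $\zeta(t)^{-\gamma}\le c_{0}^{-\gamma}$; the same region also yields a uniform bound on $w(t)=\avint u^{p-1+r}\,dx$.

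With $\zeta(t)$ confined to a compact subinterval of $(0,\infty)$, \eqref{bsd1} becomes $u_{t}=\Delta u-u+a(t)u^{p}$ with $0<a(t)\le c_{0}^{-\gamma}$, and a controlled moment $\avint u^{p-1+r}\,dx$ is available. Reinstating the dissipative gradient term in the inequality above and using Gagliardo–Nirenberg interpolation absorbed via Young's inequality propagates boundedness of $\avint u^{\ell}\,dx$ to every higher exponent once $\ell>\tfrac{N(p-1)}{2}$; inserting a sufficiently high moment into the Duhamel formula together with the smoothing estimate \eqref{eqn:1.2} then gives $\sup_{(0,T)}\|u(\cdot,t)\|_{\infty}\le C$, exactly as in the proof of Theorem~\ref{thm:1.3} and in \cite{j06}, so that $T=+\infty$ and \eqref{eqn:1.16} holds.

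The main obstacle is the invariant-region step: because the equation for $\dot w$ feeds on the strictly higher moment $\avint u^{2(p-1)+r}\,dx$, the triple $(\zeta,z,w)$ is not a closed ODE system, so one must either trade that equation for a one-sided differential inequality by invoking $\zeta^{2}\le zw$, or retain and spend the gradient terms through Gagliardo–Nirenberg. Making the exponents in these inequalities cooperate with the precise form of the two hypotheses, and verifying that the region they cut out is genuinely forward-invariant rather than merely containing the initial datum, is the delicate point; the concluding bootstrap is routine, the sublinearity $\frac{p-1}{r}<1$ doing the work there.
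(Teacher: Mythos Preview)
Your plan matches the paper's strategy --- moment inequalities, the Cauchy--Schwarz relation $\zeta^{2}\le wz$, a phase-plane invariance argument, then a bootstrap --- and you have correctly derived all the relevant differential relations, including the crucial observation that the $\dot z$ equation closes in $\zeta$ alone because the nonlinear contribution is $\avint u^{(r-p+1)+(p-1)}/\zeta^{\gamma}=\zeta^{\,1-\gamma}$. The point where your proposal drifts from the paper is in handling $w$: you try to salvage a \emph{differential} inequality for $\dot w$ (either by feeding $\zeta^{2}\le wz$ into it or by Gagliardo--Nirenberg), but neither route closes, since the offending term $\avint u^{2(p-1)+r}$ is genuinely a higher moment not expressible through $\zeta,z,w$.

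The paper's resolution is simpler: it never writes a differential inequality for $w$. It works only with the pair
\[
\frac{1}{r}\,\dot\zeta\le-\zeta+\frac{w}{\zeta^{\gamma}}\quad(\text{from }r\ge1)
\qquad\text{and}\qquad
\text{the closed one-sided inequality for }\dot z,
\]
and lets $w$ enter \emph{purely algebraically}. The invariance is proved by contradiction: if the condition governing $\dot\zeta<0$ first fails at some time $t_{0}$, then $\zeta$ has been strictly decreasing on $[0,t_{0})$; since $\gamma>1$ the curve $\zeta\mapsto\zeta^{\,1-\gamma}$ moves the right way under decreasing $\zeta$, so the $\dot z$ inequality keeps the second hypothesis in force up to $t_{0}$; but at $t_{0}$ the Cauchy--Schwarz bound $wz\ge\zeta^{2}$ forces the \emph{opposite} inequality, a contradiction. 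Hence $\dot\zeta<0$ for all time, the invariant region yields the $L^{p-1+r}$ bound, and the bootstrap finishes exactly as you describe (using $\frac{p-1+r}{p}\ge1$, which follows from $r\ge1$). So the missing idea is structural rather than analytic: discard the $\dot w$ equation and run a two-variable phase plane, bringing $w$ in only through $\dot\zeta$ and through $wz\ge\zeta^{2}$ at the contradiction step.
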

\begin{rem}
Theorem \ref{thm:1.6}, on the contrary, deals with the case of global-in-time existence under Turing condition and it is also in agreement with Jiang's result in \cite{j06}. Consequently under assumptions of Theorem \ref{thm:1.6} both the full system \eqref{gm1}-\eqref{gm4} as well as the  non-local problem (\ref{bsd1})-(\ref{bsd3}) ensemble the same long-time behaviour.
\end{rem}
In section \ref{prl} it has been already specified through linear stability analysis that under the Turing condition \eqref{turing2} the stable solution $\bar{u}=1$ of \eqref{ts1} destabilises as a solution of (\ref{bsd1})-(\ref{bsd3}). The next result shows that the preceding {\it DDI} phenomenon is realised in the form of diffusion-induced blow-up.
%however, actually arises in (\ref{bsd1})-(\ref{bsd3}). Theorem \ref{thm:1.7} below is proven by \cite{ln09} for $r=1$.
\begin{theorem}
Let $N\geq 3,\; \max\{1,\frac{N}{N-2}\}\leq r\leq p$ and $\frac{2}{N}<\frac{p-1}{r}<\gamma$, then there is a family of radially symmetric blowing up solutions corresponding to a family of spiky initial data.
 \label{thm:1.7}
\end{theorem}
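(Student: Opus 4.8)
The plan is to localise \eqref{bsd1} at an interior point and to run a Kaplan-type argument, keeping the non-local weight $\big(\avint u^r\,dx\big)^{\ga}$ under control by an a priori bound on $\avint u^r\,dx$ compatible with the concentration of the data; this bypasses comparison techniques, which are unavailable for \eqref{bsd1}. Take $\Om=B_1(0)$, fix a ball $B_\rho=B_\rho(0)$ with $\overline{B_\rho}\subset\Om$, and let $\vp_\rho\ge0$ be the first Dirichlet eigenfunction of $-\D$ on $B_\rho$, extended by zero to $\Om$ and normalised by $\int_{B_\rho}\vp_\rho\,dx=1$, so that by scaling its eigenvalue is $\la_1(B_\rho)=\la_1(B_1)\rho^{-2}$. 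For a radial solution $u$ of \eqref{bsd1}--\eqref{bsd3} set $y(t)=\int_{B_\rho}u(\cdot,t)\,\vp_\rho\,dx$. Multiplying \eqref{bsd1} by $\vp_\rho$, integrating over $B_\rho$, and using Green's formula together with $\vp_\rho=0$ and $\pl_\nu\vp_\rho\le0$ on $\pl B_\rho$ (so that the boundary flux of the positive function $u$ has a favourable sign) and Jensen's inequality $\int_{B_\rho}u^p\vp_\rho\,dx\ge y^p$, one gets
\[ \frac{dy}{dt}\;\ge\;-\big(\la_1(B_1)\rho^{-2}+1\big)y+\z(t)^{-\ga}y^p,\qquad \z(t):=\avint u^r\,dx. \]
Since $y(t)\le\|u(\cdot,t)\|_\infty$, it suffices to make $y$ blow up in finite time; then $T<+\infty$ by \eqref{eqn:1.13h}.

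I would then specify the family of peaky data: let $u_{0,\rho}$ be a smooth positive function equal to a fixed stable constant state (say $\equiv1$) outside $B_\rho$, plus a spike supported in $B_\rho$ of height $A_\rho=\rho^{-N/r+\vep}$ with $\vep>0$ small. The spike contributes $\sim A_\rho^r\rho^N=\rho^{r\vep}\to0$ to $\z(0)=\avint u_{0,\rho}^r\,dx$, so $\z(0)\le M_0$ with $M_0$ independent of $\rho$, while $y(0)\sim A_\rho$; hence
\[ \z(0)^{-\ga}y(0)^{p-1}\;\gtrsim\;A_\rho^{p-1}=\rho^{-(p-1)(N/r-\vep)}\;\gg\;\rho^{-2}\qquad(\rho\downarrow0), \]
the last inequality holding precisely because $\frac{p-1}{r}>\frac2N$ (choose $\vep$ so small that $(p-1)(N/r-\vep)>2$). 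Thus, as long as $\z(t)\le M$ on an interval $[0,\tau)$ for a fixed $M>M_0$, for $\rho$ small the right-hand side of the differential inequality is $\ge\tfrac12 M^{-\ga}y^p$ whenever $y\ge y(0)$, so $y$ — hence $\|u(\cdot,t)\|_\infty$ — blows up by a time $T_1\le\frac{2M^\ga}{p-1}A_\rho^{-(p-1)}=o(\rho^2)$, and $T\le T_1<+\infty$.

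The crux, and the main obstacle, is to secure $\z(t)\le M$ on the (short, data-dependent) interval on which blow-up occurs, which I would handle by continuation. If $t_\ast$ is the first time with $\z(t_\ast)=M$, then on $[0,t_\ast]$ the first inequality holds with $\z^{-\ga}\ge M^{-\ga}$, so $y$ obeys $y'\ge\tfrac12 M^{-\ga}y^p$ there; since $y$ stays finite on $[0,t_\ast]$ this forces $t_\ast<T_1=o(\rho^2)$. On $[0,t_\ast]$, testing \eqref{bsd1} by $u^{r-1}$,
\[ \frac{d\z}{dt}=-\frac{4(r-1)}{r}\avint|\grad (u^{r/2})|^2\,dx-r\z+r\z^{-\ga}\avint u^{p-1+r}\,dx\;\le\;r\z^{-\ga}\avint u^{p-1+r}\,dx, \]
and $\z\ge c>0$ by Theorem~\ref{eiq} and Remark~\ref{ts3}; so it remains to bound $\int_0^{t_\ast}\!\avint u^{p-1+r}\,dx\,dt$. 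Since $t_\ast=o(\rho^2)$ the spike has not spread appreciably beyond $B_{2\rho}$, where $\|u\|_\infty\lesssim A_\rho$ can be propagated by \eqref{eqn:1.2}, so $\avint u^{p-1+r}\,dx\lesssim A_\rho^{p-1}\!\int_{B_{2\rho}}u^r\,dx+1\lesssim A_\rho^{p-1+r}\rho^N+1$; integrating and using $T_1\sim A_\rho^{-(p-1)}$ gives an increment $\lesssim A_\rho^r\rho^N+A_\rho^{-(p-1)}=o(1)$, so $\z(t_\ast)\le M_0+o(1)<M$ for $\rho$ small — a contradiction. Hence $\z\le M$ on $[0,T)$ and the second paragraph yields $T\le T_1<+\infty$. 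Making rigorous the claim that the spike does not spread and $\|u\|_\infty$ stays $\lesssim A_\rho$ before $T_1$ for the full non-local problem is where the smoothing estimate \eqref{eqn:1.2} and the Sobolev-type restriction $r\ge\frac{N}{N-2}$ enter, while the Turing condition $\ga>\frac{p-1}{r}$ guarantees that the nearly-constant bulk of $u_{0,\rho}$ does not itself trigger blow-up (cf.\ Proposition~\ref{ats1}), so the blow-up exhibited is genuinely diffusion-driven. Radial symmetry of the constructed solutions is automatic since $\Om$ is a ball and the data are radial, and letting $\rho\downarrow0$ yields the asserted family.
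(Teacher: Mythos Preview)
Your Kaplan-type approach is natural, but the continuation argument in the third paragraph has a genuine circularity that cannot be closed as written. You want to show $\zeta(t_\ast)<M$ by bounding $\int_0^{t_\ast}\avint u^{p-1+r}\,dx\,dt$, and for this you assert that on $[0,t_\ast]$ one has $\|u\|_\infty\lesssim A_\rho$ (so that $u^{p-1+r}\le A_\rho^{p-1}u^r$ inside the spike). But $t_\ast$ is only known to satisfy $t_\ast<T_1$, and nothing prevents $t_\ast$ from being arbitrarily close to $T_1$: on $[0,t_\ast]$ the Kaplan inequality $y'\ge\tfrac12 M^{-\gamma}y^p$ forces $y(t_\ast)$, hence $\|u(\cdot,t_\ast)\|_\infty$, to be as large as you like if $t_\ast$ approaches $T_1$. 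The semigroup estimate \eqref{eqn:1.2} is linear and gives no a priori sup bound for the nonlinear solution; a Duhamel iteration reproduces the same ODE blow-up scale and does not keep $\|u\|_\infty$ below $A_\rho$. In short, the quantity you need bounded ($\|u\|_\infty$) is exactly the one you are driving to infinity, and the loop does not close.

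What the paper does instead is precisely designed to decouple these two issues. Working with $z=e^tu$ and explicit initial profiles $\varphi_\delta(\rho)\sim\rho^{-2/(p-1)}$, it proves a \emph{pointwise spatial decay estimate uniform in time}: via the Friedman--McLeod/Hu--Yin auxiliary function $\psi=\rho^{N-1}z_\rho+\varepsilon\rho^Nz^q/\bar z^{\gamma+1}$ (with $1<q<p$, $N>2p/(q-1)$) and the maximum principle, one gets $z(\rho,t)\le C\rho^{-2/(q-1)}$ on $(0,\tfrac34)\times(0,T_\delta)$. This yields $\avint z^p\le C$ \emph{even as $z(0,t)\to\infty$}, hence a uniform lower bound $K(t)\ge D>0$ on the nonlocal coefficient. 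Only then does comparison with the local problem $\tilde z_t=\Delta\tilde z+D\tilde z^p$ and the $\tilde z_t>\tilde z^p$ device produce blow-up. The role of $p>\tfrac{N}{N-2}$ (via $r\ge\tfrac{N}{N-2}$ and $r\le p$) is to make $\avint\varphi_\delta^p$ finite and to allow the choice of $q$ with $N>2p/(q-1)$; the Turing condition $\gamma>\tfrac{p-1}{r}$ is used to make $d\lambda^{-r\gamma+p-1}\to\infty$ as $\lambda\downarrow0$ and to ensure $\mu/p>1$ in the key $L^p$ estimate. Your heuristic that ``the spike has not spread'' is morally the right picture, but making it quantitative up to the blow-up time requires exactly this pointwise profile control, not a sup-norm bound.
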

\begin{rem}
Usually {\it DDI} phenomena are connected with pattern formation. The same happens in the case of the diffusion-induced blow-up provided by Theorem \ref{thm:1.7}. The form of the destabilising patterns are determined in section \ref{sec:7}, see particularly Remark \ref{nkl7}.
\end{rem}
\section{Proof of Theorem \ref{thm:1.1}}\label{sec:qi}

Set $\chi=u^{\frac{1}{\al}}$, $\al\neq 0$, then $\chi$ satisfies
\bge
&&\al \chi_t=\alpha\left(\Delta \chi+4 (\al-1) \vert\nabla \chi^{\frac{1}{2}}\vert^2\right)-\chi+f \quad \mbox{in}\quad \Om\times (0,T),\label{ob1}\\
&& \frac{\pl \chi}{\pl \nu}=0, \quad \mbox{on}\quad \partial\Om\times (0,T),\label{ob2a}\\
&&\chi(x,0)=u_0^{\frac{1}{\al}}(x),\quad\mbox{in}\quad \Omega,\label{ob2}
\ege
with
\begin{equation}
f=\frac{u^{p-1+\frac{1}{\al}}}{\left(\avint u^r\right)^{\ga}}
 \label{ob3}.
\end{equation}
Averaging (\ref{ob1}) over $\Om$, we obtain
\begin{equation}\label{ob4}
\alpha\frac{d}{dt}\avint \chi+4\alpha(1-\alpha)\avint\vert \nabla \chi^{\frac{1}{2}}\vert^2+\avint \chi=\avint f
\end{equation}
and hence
\bge\label{ts2}
\frac{d}{dt}\avint \chi+4(1-\alpha)\avint\vert\nabla \chi^{\frac{1}{2}}\vert^2+\frac{1}{\alpha}\avint \chi\leq 0
\ege
for $\alpha<0$ since also $f>0.$ Letting now $\delta=-\frac{1}{\alpha}$ we have
\[ \frac{d}{dt}\avint \chi+4(1+\delta^{-1})\avint\vert \nabla \chi^{\frac{1}{2}}\vert^2 \leq \delta \avint \chi. \]
Since Poincar\'e-Wirtinger's inequality reads
\bgee
\Vert \nabla w\Vert_2^2\geq \mu_2\Vert w\Vert_2^2,\quad\mbox{for any}\quad w\in H^1(\Om),
\egee
where $\mu_2$ is the second eigenvalue of the Laplace operator associated with Neumann boundary conditions, then applied for $w=\chi^{\frac{1}{2}}$ to \eqref{ts2} entails
\be\label{mk5}
\frac{d}{dt}\avint \chi+c\avint \chi\leq 0
\ee
for $0<\delta\ll 1$. Differential inequality  \eqref{mk5} implies that $\chi(t)\leq C<\infty$ for any $t>0$ and thus (\ref{eqn:1.16h}) follows by the fact that $\chi=u^{-\delta}.$

\section{Proof of Theorems \ref{thm:1.2} and \ref{thm:1.2a}}\label{sec:2}

Throughout the current section, we consider $r=p+1$.

\begin{proof}[Proof of Theorem \ref{thm:1.2}]
Since $J(u_0)\leq 0$, then via the dissipation relation \eqref{27} we derive  $J(u(t))\leq 0$ for any $0<t<T.$

We also have
\[ \frac{d}{dt}\Vert u\Vert_2^2=-2I(u) \]
where
\begin{eqnarray*}
I(u) & = & \Vert \nabla u\Vert_2^2+\Vert u\Vert_2^2-\frac{\avint u^{p+1}}{\left( \avint u^r\right)^\gamma} = \Vert \nabla u\Vert_2^2+\Vert u\Vert_2^2-\left( \avint u^{p+1}\right)^{1-\gamma} \\
& = & 2J(u)+\left( \frac{2}{(p+1)(1-\gamma)}-1\right)\left( \avint u^{p+1}\right)^{1-\gamma} \\
& \leq & -\left( 1-\frac{2}{(p+1)(1-\gamma)}\right)\Vert u\Vert_{p+1}^{(p+1)(1-\gamma)}.
\end{eqnarray*}
Since $0<\gamma<\min\{ 1, \frac{p-1}{p+1}\}$, there holds that $(p+1)(1-\gamma)>2,$ and thus by virtue of H\"{o}lder's inequality we can find $\alpha>0$ such that
\bge\label{ts4}
\frac{d}{dt}\Vert u\Vert_2^2\geq c\Vert u\Vert_2^{2+\alpha},
\ege
since also $p>1.$ Now \eqref{ts4} entails that $\Vert u\Vert_2^2$ blows up in finite time since $u_0(x)>0$ and thus $u$ exhibits a finite-time blow-up as well.
\end{proof}

\begin{proof}[Proof of Theorem \ref{prop1}]
In this case we have $0<\gamma<1$ and $(p+1)(1-\gamma)<2$. Dissipation relation  \eqref{27} suggests
\bge\label{ts5}
\frac{1}{2}(\Vert \nabla u\Vert_2^2+\Vert u\Vert_2^2)\leq J(u_0)+\frac{1}{(p+1)(1-\gamma)}\left( \avint u^{p+1}\right)^{1-\gamma}.
\ege
Furthermore, Sobolev's and Young's inequalities entail
\bge\label{ts6}
\left( \avint u^{p+1}\right)^{1-\gamma}=\Vert u\Vert_{p+1}^{(p+1)(1-\gamma)}\leq \frac{1}{4}\Vert u\Vert_{H^1}^2+C,
\ege
recalling $(p+1)(1-\gamma)<2.$ Combining now \eqref{ts5} with \eqref{ts6} we derive the estimate
\begin{equation}
\Vert u(\cdot,t)\Vert_{H^1}\leq C\quad\mbox{for}\quad 0<t<T.
 \label{eqn:1.26}
\end{equation}
Now $u$ satisfies
\bgee
&& u_t=\Delta u-u+a(t)u^p, \quad\mbox{in}\quad \Om\times(0,T),\\
&& \frac{\partial u}{\partial\nu}=0,\quad\mbox{on}\quad \partial\Om\times(0,T),\\
&& u(x,0)=u_0(x)>0,\quad\mbox{in}\quad\Om,
\egee
with $$0\leq a(t)=\frac{1}{\left(\avint u^{p+1}\right)^{\gamma}}\leq C<+\infty,$$ due to \eqref{eqn:1.17}. Then, letting $A$ to be $-\Delta+1$ with the homogeneous Neumann boundary condition, we use
\[ u(\cdot,t)=e^{-tA}u_0+\int_0^te^{-(t-s)A}a(s)u(\cdot,s)^p \ ds  \]
to apply a standard bootstrap argument. In fact, it follows that (\ref{eqn:1.16}) from (\ref{eqn:1.26}), $1<p<\frac{N+2}{N-2},$ and the proof of \cite[Lemma 8.1]{ss12}.
\end{proof}

\begin{rem}
In the case where $\gamma=\frac{p-1}{p+1}$ and $1<p<\frac{N+2}{N-2}$, we have always $T=+\infty$, whilst infinite-time blow-up, i.e. $\lim_{t\uparrow +\infty}\Vert u(\cdot,t)\Vert_\infty=+\infty,$ may occur. In fact, by the proof of Theorem \ref{prop1} we have
\[ \Vert u(\cdot,t)\Vert_{H^1}\leq C(1+t) \quad \mbox{for $0<t<T$}, \]
and then by virtue of Sobolev's imbedding we obtain $\Vert u(\cdot,t)\Vert_\infty\leq C_T,$  which entails $T=+\infty$ by the parabolic regularity. Furthermore, in case where $J(u_0)<0$  we derive
\[ \frac{d}{dt}\Vert u\Vert_2^2\geq -2J(u_0)>0, \]
by the proof of Theorem \ref{thm:1.2}, and then it follows that $\lim_{t\uparrow\infty}\Vert u(\cdot,t)\Vert_2=+\infty.$ The latter implies that $\lim_{t\uparrow+\infty}\Vert u(\cdot, t)\Vert_\infty=+\infty$ and thus infinite-time blow-up occurs in that case.
\end{rem}

\section{Proof of Theorem \ref{thm:1.3}}\label{sec:3}
We assume $\frac{p-1}{r}<\min\{ 1, \frac{2}{N}, \frac{1}{2}(1-\frac{1}{r})\}$ and $0<\gamma<1$. We also consider $N\geq 2$ since the complementary case $N=1$ is simpler.

Since $p>1$, the above assumption implies $\frac{p-1}{r}<\frac{2}{N}$ and $r>p$. Then there holds that
\[ 0<\frac{1}{r-p+1}<\min\left\{ 1, \frac{1}{p-1}\cdot\frac{2}{N-2}, \frac{1}{1-p+r\gamma}\right\}, \]
since also $0<\gamma<1$.

Choosing  $\frac{1}{r-p+1}<\alpha<\min\{ 1, \frac{1}{p-1}\cdot\frac{2}{N-2}, \frac{1}{1-p+r\gamma} \}$, we have
\[ \max\left\{ \frac{N-2}{N}, \frac{1}{\alpha r}\right\}<\frac{1}{-\alpha+1+\alpha p}, \]
and hence there is $\beta>0$ such that
\begin{equation}
\max\left\{ \frac{N-2}{N}, \frac{1}{\alpha r}\right\} <\frac{1}{\beta}<\frac{1}{-\alpha+1+\alpha p}<2,
 \label{eqn:15}
\end{equation}
which also satisfies
\begin{equation}
\frac{\beta}{\alpha r}<1<\frac{\beta}{-\alpha+1+\alpha p}.
 \label{eqn:16}
\end{equation}
Note that for $f$ defined by \eqref{ob3} holds
\[ \avint f=\frac{\avint u^{p-1+\frac{1}{\alpha}}}{\left( \avint u^r\right)^\gamma}= \frac{\avint \chi^{-\alpha+1+\alpha p}}{\left( \avint\chi^{\alpha r}\right)^\gamma}. \]
By virtue of (\ref{eqn:16})
\[ \avint \chi^{-\alpha+1+\alpha p}\leq \left( \avint \chi^\beta\right)^{\frac{-\alpha+1+\alpha p}{\beta}}\quad\mbox{and}\quad \left( \avint \chi^{\alpha r}\right)^\gamma \geq \left( \avint \chi^\beta\right)^{\frac{\alpha r}{\beta}\cdot\gamma} \]
and thus
\begin{equation}
\avint f\leq \left( \avint \chi^\beta\right)^{\frac{-\alpha+1+\alpha p-\alpha r\gamma}{\beta}}=\Vert \chi^{\frac{1}{2}}\Vert_{2\beta}^{2(1-\sigma)}
 \label{eqn:3.3}
\end{equation}
with $0<\sigma=\alpha\{1-p+r\gamma\}<1$, recalling $\frac{p-1}{r}<\gamma$ and $\alpha<\frac{1}{1-p+r\gamma}$.

Now since $1<2\beta<\frac{2N}{N-2}$ holds due to (\ref{eqn:15}), then Sobolev's and Young's inequalities entail
\[ \frac{d}{dt}\avint \chi+c\Vert \chi^{\frac{1}{2}}\Vert_{H^1}^2\leq C,\quad 0<t<T, \]
using also $0<\alpha<1$, and in particular,
\[ \avint \chi\leq C, \quad\mbox{for any}\quad 0<t<T. \]
Since $\frac{1}{\alpha}$ can be chosen to be close to $r-p+1$, we have
\begin{equation}
\Vert u(\cdot,t)\Vert_q\leq C_q, \quad 0<t<T,\quad\mbox{for any}\quad 1\leq q<r-p+1,
 \label{eqn:15h}
\end{equation}
taking into account that $\chi=u^{\frac{1}{\alpha}}.$

Since $\frac{p-1}{r}<\frac{1}{2}(1-\frac{1}{r})$ implies $\frac{r-p+1}{p}>1,$ then if there is $a>1$ such that
\bge\label{ikl1}
\Vert u(\cdot,t)\Vert_q\leq C_q, \quad 0<t<T,\quad\mbox{for any}\quad 1\leq q<a(r-p+1),
\ege
by virtue of the semigroup estimate \eqref{eqn:1.2} inequality \eqref{ikl1} can be extended for any $q\geq 1$ as long as $\frac{N}{2}(\frac{1}{\ell}-\frac{1}{q})<1$. Therefore, we obtain
\bge\label{psl2}
\Vert u(\cdot,t)\Vert_q\leq C_q, \quad 0<t<T,\quad\mbox{for any}\quad 1\leq q<a_1(r-p+1),
\ege
and for $a_1>0$ defined by
\bge\label{psl1}
 \frac{1}{a_1}=\frac{1}{a}-\frac{2}{N}\cdot\frac{r-p+1}{p},
\ege
as long as the right-hand side of \eqref{psl1} is positive, otherwise  $q=\infty$ into relation \eqref{psl2}. We eventually obtain (\ref{eqn:1.16}), and the proof is complete.

\section{Proof of Theorems \ref{obu1} and \ref{thm:1.6}}\label{sec:4}

Let $(\zeta, z,w)=(\zeta(t), z(t), w(t))$ defined by (\ref{eqn:1.21}) then, by virtue of H\"{o}lder's inequality we have
\begin{equation} \label{ob5}
wz\geq \zeta^2.
\end{equation}
\begin{proof}[Proof of Theorem \ref{obu1}]
We first consider $r\leq 1<\frac{p-1}{r}$ and $0<\gamma<1$. Since $r\leq 1$ then \eqref{ob4} for $\alpha=\frac{1}{r}$ yields
\begin{eqnarray}
\frac{1}{r} \frac{d \zeta}{dt}& = & \frac{4}{r}\left(\frac{1}{r}-1\right)\avint \vert\nabla u^{\frac{r}{2}}\vert^2-\zeta+\frac{z}{\zeta^{\gamma}}\quad\mbox{for}\quad 0<t<T,
 \label{ob6}
\end{eqnarray}
and taking \eqref{ob5} into account we derive
\begin{eqnarray}
\frac{1}{r} \frac{d \zeta}{dt}\geq  -\zeta+\frac{\zeta^{2-\gamma}}{w} =\frac{\zeta}{w}\left(-w+\zeta^{1-\gamma}\right)\quad\mbox{for}\quad 0<t<T.
 \label{ob6a}
\end{eqnarray}
Furthermore, since $\frac{p-1}{r}>1$ then \eqref{ob4} for $\alpha=\frac{1}{-p+1+r}$ reads
\be\label{mk2}
\alpha\frac{dw}{dt}=4\alpha(\alpha-1)\avint \vert \nabla u^{\frac{1}{2\alpha}}\vert^2-w+\zeta^{1-\gamma},\quad\mbox{for}\quad 0<t<T,
\ee
which, since $\alpha=\frac{1}{-p+1+r}<0,$  implies
\bgee
\alpha\frac{dw}{dt}\geq -w+\zeta^{1-\gamma},\quad\mbox{for}\quad 0<t<T,
\egee
or equivalently
\begin{equation}
\frac{1}{p-1-r} \frac{dw}{dt}\leq w-\zeta^{1-\gamma},\quad\mbox{for}\quad 0<t<T.
 \label{ob7}
\end{equation}
The condition $0<\gamma<1$, entails that the curve
\begin{equation}
\Gamma_1: w=\zeta^{1-\gamma}, \ \zeta>0,
 \label{eqn:5.4}
\end{equation}
is concave in the $w\zeta-$plane, with its endpoint at the origin $(0,0)$. Relations  (\ref{ob6a}) and (\ref{ob7}) imply that the region $\mathcal{R}=\{ (\zeta,w) \mid w<\zeta^{1-\gamma}\}$ is invariant for the system \eqref{ob6}, \eqref{mk2}, i.e. if $(\zeta(0),w(0))\in \mathcal{R}$ then $(\zeta(t),w(t))\in \mathcal{R}$ for any $t>0.$ Furthermore,  $\zeta=\zeta(t)$ and $w=w(t)$ are increasing and decreasing on $\mathcal{R},$ respectively.

In case  $w(0)<\zeta(0)^{1-\gamma}$, then
\[ \frac{dw}{dt}<0, \ \frac{d\zeta}{dt}>0, \quad\mbox{for}\quad 0\leq t<T, \]
and thus,
\[ \frac{1}{w}-\frac{1}{\zeta^{1-\gamma}}\geq \frac{1}{w(0)}-\frac{1}{\zeta(0)^{1-\gamma}}\equiv c_0>0, \quad\mbox{for}\quad 0\leq t<T.\]
Therefore by virtue of \eqref{ob6a}
\begin{equation}
\frac{1}{r}\frac{d\zeta}{dt}\geq -\zeta+\frac{\zeta^{2-\gamma}}{w}=\zeta^{2-\gamma}\left(\frac{1}{w}-\frac{1}{\zeta^{1-\gamma}}\right) \geq c_0\zeta^{2-\gamma}, \quad 0\leq t<T.
 \label{eqn:5.3}
\end{equation}
Since $2-\gamma>1$ then (\ref{eqn:5.3}) implies that $\zeta(t)$ blows up in finite time
\bgee
t_1\leq \hat{t}_1\equiv\frac{(\zeta(0))^{\ga-1}}{(1-\ga)c_0 r},
\egee
and using the inequality
\bgee
\zeta(t)=\avint u^r\,dx\leq \|u(\cdot,t)\|^r_{\infty}
\egee
we conclude that $u(x,t)$ blows up in finite time $T\leq t_1$ as well.

We consider now the second case when  $\frac{p-1}{r}\geq 2$ and thus $q=\frac{p-1-r}{r}\geq 1.$ Then by virtue of Jensen's inequality
\bgee
\avint u^r\cdot\left( \avint (u^{-r})^q\right)^{\frac{1}{q}}\geq \avint u^{r}\cdot\avint u^{-r}\geq 1,
 \label{eqn:5.5}
\egee
and thus $\zeta^{\frac{1}{r}}\geq w^{-\frac{1}{p-1-r}}$ which  entails
\be\label{mk1}
w\geq \zeta^{-\frac{p-1-r}{r}}=\zeta^{1-\frac{p-1}{r}}.
\ee
In addition, inequality $\frac{p-1}{r}\geq 2$ implies that the curve
\[ \Gamma_2: w=\zeta^{1-\frac{p-1}{r}}, \ \zeta>0, \]
is convex and approaches $+\infty$ and $0$ as $\zeta\downarrow 0$ and $\zeta\uparrow+\infty$, respectively. The crossing of $\Gamma_1$ and $\Gamma_2$ is the point $(\zeta,w)=(1,1)$, and therefore $w(0)<1$ combined with \eqref{mk1} imply $w(0)<\zeta(0)^{1-\gamma}$. Consequently, the second case is reduced to the first one and again the occurrence of finite-time blow-up is established.
\end{proof}
\begin{rem}
The existence of the invariant region $\mathcal{R}=\{ (\zeta,w) \mid w<\zeta^{1-\gamma}\}$ for the system \eqref{ob6}, \eqref{mk2} entails that if the consumption of the activator cannot be suppressed initially then this can lead to its unlimited growth.
\end{rem}
\begin{proof}[Proof of Theorem \ref{thm:1.6}]
We first note that under the assumption $r\geq 1$ relation \eqref{ob6} entails
\begin{equation}
\frac{1}{r}\frac{d\zeta}{dt}\leq -\zeta+\frac{z}{\zeta^\gamma},\quad\mbox{for}\quad 0\leq t<T.
 \label{eqn:5.6}
\end{equation}
Furthermore, since $\alpha=\frac{1}{p-1-r}<0$ results from $\frac{p-1}{r}<1$, then \eqref{mk2} implies
\bgee
\alpha\frac{dw}{dt}\geq -w+\zeta^{1-\gamma},\quad\mbox{for}\quad 0\leq t<T,
 \egee
or equivalently
\begin{equation}
\frac{1}{-p+1+r}\frac{dw}{dt}\leq w-\zeta^{1-\gamma},\quad\mbox{for}\quad 0\leq t<T.
 \label{eqn:5.8}
\end{equation}
%Note that $\gamma>1$ entails that the curve $\Gamma_1$ defined by (\ref{eqn:5.4}) is convex, approaching $+\infty$ and $0$ as %$\zeta\downarrow 0$ and $\zeta\uparrow+\infty$, respectively.

We claim that the assumption $\zeta(0)^{1+\gamma}>z(0)$ yields that  $\zeta(t)^{1+\gamma}>z(t)$ for any $0\leq t<T.$ Indeed, let us assume there exists $t_0>0$ such that
\[ \zeta(t)^{1+\gamma}>z(t), \ 0\leq t<t_0, \quad\mbox{and}\quad \zeta(t_0)=z(t_0). \]
Then we obtain
\bge
\frac{d\zeta}{dt}<0, \quad\mbox{for}\quad 0\leq t<t_0\quad\mbox{and}\quad w(t_0)\geq \zeta(t_0)^{1-\gamma},
 \label{eqn:5.9}
\ege
by virtue of \eqref{eqn:5.6} and (\ref{ob5}).

On the other hand,  $w(0)<\zeta(0)^{1-\gamma},$  due to (\ref{eqn:5.8}), entails
\bgee
\frac{d w}{dt}<0, \quad\mbox{for}\quad 0\leq t<t_0.
 \label{eqn:5.9a}
\egee
Consequently, since also $\gamma>1$ then the curve $(w(t), \zeta(t))$ for $0\leq t\leq t_0$, remains in the region $w<\zeta^{1-\gamma}$ and hence $w(t_0)<\zeta(t_0)^{1-\gamma}$, which contradicts the second inequality of (\ref{eqn:5.9}).

Thus it follows that
\[ \frac{d\zeta}{dt}<0, \ \frac{dw}{dt}<0, \quad\mbox{for}\quad 0\leq t<T, \]
and in particular, we have
\[ \Vert u(\cdot,t)\Vert_{p-1+r}\leq C,\quad\mbox{for}\quad 0\leq t<T. \]
Since $r\geq 1$ implies $\frac{p-1+r}{p}\geq 1$, we obtain (\ref{eqn:1.16}) by the same  bootstrap argument used at the end of the previous section.
\end{proof}

\section{Proof of Theorem \ref{thm:1.7}}\label{sec:5}

In the current  section we restrict ourselves to the radial case $\Om=B(0,1)$ and we also consider $N\geq 3.$ Then the solution  of \eqref{bsd1}-\eqref{bsd3} is radial symmetric, that is $u(x,t)=u(\rho,t)$ for $0\leq \rho=\vert x\vert<1.$

We regard, as in \cite{hy95}, spiky initial data of the form
\begin{equation}
u_0(\rho)=\lambda\varphi_\delta(\rho),
 \label{eqn:6.1}
\end{equation}
with $0<\lambda\ll 1$ and
\begin{equation}\label{id}
\varphi_\delta(\rho)=\begin{cases}
      \rho^{-a},&  \de\leq \rho\leq 1 \\
      \de^{-a}\left(1+\frac{a}{2}\right)-\frac{a}{2}\de^{-(a+2)}\rho^2, & 0\leq \rho<\de,
      \end{cases}
\end{equation}
for $a=\frac{2}{p-1}$ and $0<\de<1.$

It can be easily checked that $u_0(\rho)$ is decreasing, i.e.  $u'_{0}(\rho)<0,$ and thus  $\max_{\rho\in[0,1]}u_0(\rho)=u_0(0).$ Furthermore, due to the maximum principle we have that $u(\rho,t)$ is radial decreasing too, i.e. $u_{\rho}(\rho,t)<0.$

Now having specified the form of the considered initial data, Theorem \ref{thm:1.7} can be rewritten as follows:
\begin{theorem}
Let $N\geq 3,\;1\leq r\leq p$, $p>\frac{N}{N-2}$ and $\frac{2}{N}<\frac{p-1}{r}<\gamma.$ Then there is $\lambda_0>0$ with the following property: any $0<\lambda\leq \lambda_0$ admits $0<\delta_0=\delta_0(\lambda)<1$ such that any solution of problem (\ref{bsd1})-(\ref{bsd3}) with initial data of the form (\ref{eqn:6.1}) and $0<\delta\leq \delta_0$ blows up in finite time, i.e. $T<+\infty.$
 \label{thm:6.1}
\end{theorem}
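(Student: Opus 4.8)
The plan is to argue that for the spiky initial data \eqref{eqn:6.1}--\eqref{id} the quantity $\zeta(t)=\avint u^r\,dx$ satisfies a differential inequality of the type $\frac{d\zeta}{dt}\geq c\,\zeta^{\kappa}$ with $\kappa>1$, whence $\zeta$, and therefore $\|u(\cdot,t)\|_\infty$, blows up in finite time. The whole point of the choice $a=\frac{2}{p-1}$ is that $\varphi_\delta$ is a near-stationary supersolution profile for the \emph{local} part $\Delta u - u + u^p$ away from the origin (the exponent $-a$ makes $\rho^{-a}$ homogeneous of the correct degree for $\Delta$ versus $u^p$), so the dynamics is driven by the non-local denominator $\big(\avint u^r\big)^\gamma$, which for a tall thin spike is \emph{small}. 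First I would record, as in Proposition \ref{lbd} and via the radial monotonicity $u_\rho<0$ established just above, that the solution stays radially decreasing, so $\|u(\cdot,t)\|_\infty=u(0,t)$, and that one may work entirely with radial quantities.

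The key computational step is to estimate the two competing integrals for the initial datum and propagate them. Because $a=\frac{2}{p-1}$ and $p>\frac{N}{N-2}$, i.e. $a<N-2$, the tail $\rho^{-a}$ is integrable against $\rho^{N-1}\,d\rho$ on $[\delta,1]$, so $\avint u_0^r\asymp \lambda^r$ (the inner core $\rho<\delta$ contributes a lower-order $O(\delta^{N-ra})$ term since $ra<a<N$; here one uses $r\le p$ to keep $ra$ under control) — in particular $\zeta(0)$ is bounded \emph{below} independently of $\delta$ once $\lambda$ is fixed. Meanwhile $\avint u_0^p$: with $pa=\frac{2p}{p-1}$, whether $pa\gtrless N$ decides whether the core dominates, and the condition $\frac{p-1}{r}>\frac 2N$ (equivalently $\frac 2{p-1}<\frac Nr$, so $a<\frac Nr$, but for the $p$-integral what matters is $pa$ vs.\ $N$) is exactly what makes the spike ``peaky'' in the right sense: the nonlinear source $\avint u^p/\zeta^\gamma$ is large compared to the damping $\bar u$. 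Concretely I would derive, from the averaged equation $\frac{d\bar u}{dt}=-\bar u+\avint u^p/\zeta^\gamma$ and a lower bound $\avint u^p\ge c\,\bar u^{\,p}$ (Jensen) together with the $\delta$-uniform bound $\zeta\le C$ coming from Theorem \ref{eiq}'s companion inequality \eqref{eqn:1.17} \emph{inverted} on a short time interval, an inequality $\frac{d\bar u}{dt}\ge -\bar u+c\,\bar u^{\,p}$, which already blows up once $\bar u(0)=\avint u_0$ is large; and $\avint u_0\gtrsim \lambda\,\delta^{\,(N-(a+?))}$... — more robustly, I would instead track $w(t)=\avint u^{p-1+r}$ or directly $\zeta$ via \eqref{ob6}: for $r\le 1$ the gradient coefficient $\frac 4r(\frac 1r-1)\ge 0$ is favorable, giving $\frac 1r\dot\zeta\ge -\zeta+\zeta^{1-\gamma}w$-type terms, and then choosing $\delta$ small forces the region $\{w<\zeta^{1-\gamma}\}$ of Theorem \ref{obu1} to be entered, after which that theorem's argument applies verbatim.

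The honest statement of the strategy is therefore: \emph{reduce Theorem \ref{thm:6.1} to Theorem \ref{obu1}} by showing the spiky data satisfy $w(0)<\zeta(0)^{1-\gamma}$ — or, when $\frac{p-1}{r}\ge 2$, $w(0)<1$ — for $\lambda$ small and $\delta=\delta_0(\lambda)$ small enough. For this one computes $w(0)=\avint u_0^{\,p-1+r}$: with exponent $b:=(p-1+r)a=2+ra\le 2+pa$, and since $\lambda\ll1$, $w(0)=\lambda^{p-1+r}\big(O(1)+O(\delta^{N-b})\big)$, while $\zeta(0)^{1-\gamma}\asymp \lambda^{r(1-\gamma)}$; because $p-1+r> r(1-\gamma)$ (as $p>1$ and $\gamma>0$) and $\lambda\to0$, we get $w(0)\ll\zeta(0)^{1-\gamma}$, with the $\delta$-dependent remainder absorbed by taking $\delta_0(\lambda)$ small (this is where the case split $N-b\gtrless0$, i.e. the relation of $pa=\frac{2p}{p-1}$ to $N$, must be handled — the constraint $p>\frac N{N-2}$ guarantees $a<N-2$ hence $b=2+ra<2+pa\le 2+\frac{2p}{p-1}$, and $\frac{p-1}{r}>\frac2N$ keeps $ra<\frac{2r}{?}$, ensuring either $N-b>0$ outright or that the core term, though large, still scales like $\lambda^{p-1+r}\delta^{-\text{(positive)}}$ which is beaten by $\lambda^{r(1-\gamma)}$ only after also shrinking $\lambda$ — so in fact one picks $\lambda_0$ first, then $\delta_0(\lambda)$).

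I expect the main obstacle to be precisely this bookkeeping of how the core integral $\int_0^\delta \varphi_\delta^{\,m}\rho^{N-1}\,d\rho$ scales in $\delta$ for the various exponents $m\in\{r,\ p,\ p-1+r\}$ and verifying that the inequalities $\max\{1,\frac N{N-2}\}\le r\le p$ and $\frac 2N<\frac{p-1}{r}<\gamma$ are exactly what is needed to make the ``spike wins'' comparison go through uniformly — i.e., to produce an honest function $\delta_0(\lambda)$ rather than an implicit constraint. Once the initial data are shown to lie in the invariant region $\mathcal R=\{w<\zeta^{1-\gamma}\}$ (with the gradient terms in \eqref{ob6}, \eqref{mk2} of the right sign because $r\le1$), the finite-time blow-up of $\zeta$, and hence of $\|u(\cdot,t)\|_\infty$, follows immediately from the proof of Theorem \ref{obu1}, so no new PDE estimate is required beyond the a priori lower bound \eqref{eqn:1.17} and the radial monotonicity already in hand. $\hfill\square$
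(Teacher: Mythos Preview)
Your reduction to Theorem \ref{obu1} cannot work, for two independent reasons.

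First, the hypotheses are disjoint. Theorem \ref{obu1} requires $r\le 1$, $0<\gamma<1$, and $\frac{p-1}{r}>1$ (anti-Turing), whereas Theorem \ref{thm:6.1} assumes $r\ge 1$ together with the Turing condition $\frac{p-1}{r}<\gamma$. Your sentence ``with the gradient terms in \eqref{ob6}, \eqref{mk2} of the right sign because $r\le1$'' is exactly where the argument breaks: for $r>1$ the coefficient $\frac{4}{r}\big(\frac{1}{r}-1\big)$ in \eqref{ob6} is \emph{negative}, so one obtains an upper bound on $\dot\zeta$, not a lower one, and the region $\mathcal{R}=\{w<\zeta^{1-\gamma}\}$ is no longer forward-invariant. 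Even at the borderline $r=1$, the combination of $\frac{p-1}{r}>1$ with the Turing condition would force $\gamma>1$, again outside Theorem \ref{obu1}'s range.

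Second, and more fundamentally, the quantity $\zeta(t)=\avint u^r$ does \emph{not} blow up in this regime. Equation \eqref{eqn:7.1} in the paper shows $\avint z^p\le C$ uniformly up to the blow-up time, hence by H\"older $\avint u^r\le C$ as well; the blow-up is single-point at $\rho=0$ while all $L^\ell$ norms with $\ell\le p$ remain bounded. So any strategy that tries to drive $\zeta\to\infty$ is doomed from the outset.

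The paper's actual proof is of a completely different nature. One sets $z=e^tu$, so that $z_t=\Delta z+K(t)z^p$ with $K(t)=e^{(1+r\gamma-p)t}\big(\avint z^r\big)^{-\gamma}$. The heart of the matter is a continuity-bootstrap estimate (Proposition \ref{lem4}) establishing $\avint z^p\asymp\bar z^{\mu}$ on a time interval $[0,t_0]$ \emph{independent of} $\delta$, for some $\mu>1+r\gamma$; this yields a $\delta$-uniform lower bound $K(t)\ge D=d\lambda^{-r\gamma}$. The choice $a=\frac{2}{p-1}$ then gives $\Delta u_0+Du_0^p\ge 2u_0^p$ for $\lambda$ small (Lemma \ref{lem:6.2}), and a Friedman--McLeod comparison with the \emph{local} problem $\tilde z_t=\Delta\tilde z+D\tilde z^p$ produces $\tilde z_t>\tilde z^p$, forcing blow-up at the origin before a time of order $\delta^{a(p-1)}$, which is below $t_0$ once $\delta$ is small. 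The essential PDE ingredient you are missing is this $\delta$-independent lower control of the non-local coefficient $K(t)$, which is obtained through a weighted gradient estimate of the form $\rho^{N-1}z_\rho+\varepsilon\rho^N z^q/\bar z^{\gamma+1}\le 0$ (Lemmas \ref{lem3}--\ref{lem:6.4}) and has no counterpart in the $L^\ell$-norm machinery of Section \ref{sec:4}.
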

We perceive that  Theorem \ref{thm:6.1} for $r=1$ is nothing but Proposition 3.3 in \cite{ln09}, which was proven using  a series of auxiliary results and inspired by an approach introduced in \cite{fmc85, hy95}. Therefore, in order to prove Theorem \ref{thm:6.1}  we are following in short the arguments presented in \cite{ln09}, and we provide any modifications where are necessary.

The next lemma is elementary and so its proof is omitted.
\begin{lemma}\label{kkl1}
The function $\phi_{\delta}$ defined by \eqref{id} satisfies the following:
\begin{enumerate}
\item[(i)]There holds that
\begin{equation}
\displaystyle{ \Delta \varphi_\delta\geq -Na\varphi_\delta^p}
 \label{eqn:6.3}
\end{equation}
in the weak sense for any $0<\delta<1$.
\item[(ii)] If $m>0$ and $N>ma$, we have
\begin{equation}\label{ine}
\avint \varphi_\delta^m=\frac{N}{N-ma}+O\left(\de^{N-ma}\right), \quad \delta\downarrow 0.
\end{equation}
\end{enumerate}
\end{lemma}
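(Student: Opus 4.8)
The plan is to establish both statements by a direct computation in radial coordinates, handling the two regions $\{\de\le\rho\le 1\}$ and $\{0\le\rho<\de\}$ separately and matching them at $\rho=\de$; the only structural input is the algebraic identity $a+2=ap$, which is precisely what the choice $a=\frac{2}{p-1}$ encodes.

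For (i), I would first record that for a fixed $0<\de<1$ the function $\vp_\de$ is bounded, Lipschitz and piecewise $C^\infty$ on $\overline{B(0,1)}$, and that it is in fact $C^1$ across the interface $\rho=\de$: the values agree ($\vp_\de(\de)=\de^{-a}$ from either side) and so do the one-sided derivatives ($\vp_\de'(\de)=-a\de^{-a-1}$ from either side). Hence $\grad\vp_\de$ is continuous and piecewise smooth, so the distributional Laplacian of $\vp_\de$ carries no singular surface term on $\{\rho=\de\}$ and coincides with the a.e.-defined pointwise radial Laplacian $\vp_\de''+\frac{N-1}{\rho}\vp_\de'$; thus \eqref{eqn:6.3} reduces to a pointwise inequality in each region. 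In the outer region $\vp_\de=\rho^{-a}$, so $\D\vp_\de=a(a+2-N)\rho^{-a-2}=a(a+2-N)\vp_\de^{p}\ge -Na\vp_\de^{p}$, the last step because $a+2\ge 0$. In the inner region $\vp_\de$ is quadratic in $\rho$, and a one-line computation gives the constant value $\D\vp_\de=-Na\de^{-(a+2)}$; since $\vp_\de$ is nonincreasing on $[0,\de]$ with minimum $\vp_\de(\de)=\de^{-a}$, we have $\vp_\de\ge\de^{-a}$ there, whence $\vp_\de^{p}\ge\de^{-ap}=\de^{-(a+2)}$ and therefore $-Na\de^{-(a+2)}\ge -Na\vp_\de^{p}$. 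Integrating the resulting pointwise inequality against an arbitrary $0\le\psi\in C_c^\infty(B(0,1))$ then yields \eqref{eqn:6.3}.

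For (ii), I would pass to radial coordinates, $\avint\vp_\de^{m}=N\int_0^1\vp_\de(\rho)^{m}\rho^{N-1}\,d\rho$, and split at $\rho=\de$. On $[\de,1]$ the integrand is $\rho^{N-ma-1}$, so using $N>ma$ one gets $N\int_\de^1\rho^{N-ma-1}\,d\rho=\frac{N}{N-ma}\bigl(1-\de^{N-ma}\bigr)=\frac{N}{N-ma}+O(\de^{N-ma})$. On $[0,\de]$ the elementary two-sided bound $\de^{-a}\le\vp_\de(\rho)\le(1+\tfrac{a}{2})\de^{-a}$ gives $N\int_0^\de\vp_\de^{m}\rho^{N-1}\,d\rho=O\bigl(\de^{-ma}\de^{N}\bigr)=O(\de^{N-ma})$. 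Adding the two contributions produces \eqref{ine}.

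I do not anticipate any real obstacle — as the authors note, the lemma is elementary. The single point deserving a moment's care is the $C^1$-matching at $\rho=\de$: it is exactly this regularity that prevents a negative surface measure from appearing in the distributional Laplacian, so that the pointwise inequalities in the two regions genuinely transfer to the weak inequality \eqref{eqn:6.3}. Everything else is a bounded-function computation and an explicit one-dimensional integral.
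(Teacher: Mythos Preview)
Your proof is correct and supplies exactly the elementary computation the authors omit. The paper gives no proof of this lemma (``The next lemma is elementary and so its proof is omitted''), and your direct radial computation---checking $C^1$-matching at $\rho=\de$, verifying the pointwise inequality in each region via the identity $a+2=ap$, and splitting the integral in (ii) at $\rho=\de$---is precisely the argument one would expect the authors to have had in mind.
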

Lemma \ref{kkl1} can used to obtain some further useful estimates. Indeed, if we consider
\begin{equation}
\mu>1+r\gamma
 \label{eqn:6.15}
\end{equation}
and set
\begin{equation}
\alpha_1=\sup_{0<\delta<1}\frac{1}{\bar{\varphi}_\delta^\mu}\avint\varphi_\delta^p, \quad\mbox{and}\quad \alpha_2=\inf_{0<\delta<1}\frac{1}{\bar{\varphi}_\delta^\mu}\avint\varphi_\delta^p,
 \label{eqn:6.6}
\end{equation}
then since $p>\frac{N}{N-2}$, relation(\ref{ine}) is applicable for $m=p$ and $m=1$, and thus due to \eqref{eqn:6.15} we obtain
\bge\label{at1} 0<\alpha_1, \alpha_2<\infty. \ege
Furthermore, there holds that
\begin{equation}
d\equiv\inf_{0<\delta<1}\left( \frac{1}{2\alpha_1}\right)^{\frac{r\gamma}{p}}\left( \frac{1}{2\bar{\varphi}_\delta}\right)^{\frac{r\gamma}{p}\mu}>0.
 \label{eqn:6.7}
\end{equation}
%{\color{red} The next lemma is a modification of relation (21) of \cite{ln09}. Shall we remove it?}
The following auxiliary result provides a key inequality satisfied by the initial data $u_0=u_0(\vert x\vert)$ defined by \eqref{eqn:6.1}. Indeed, we have
\begin{lemma}
If $p>\frac{N}{N-2}$ and $\frac{p-1}{r}<\gamma$, there exists $\lambda_0=\lambda_0(d)>0$ such that for any $0<\lambda\leq \lambda_0$ there holds
\begin{equation}
\Delta u_0+d\lambda^{-r\gamma}u_0^p\geq 2u_0^p.
 \label{eqn:6.4}
\end{equation}
 \label{lem:6.2}
\end{lemma}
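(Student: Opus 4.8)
The plan is to turn \eqref{eqn:6.4} into an elementary one–variable inequality in $\lambda$, using the weak Laplacian bound \eqref{eqn:6.3} for $\varphi_\delta$ and the strict positivity of the constant $d$ from \eqref{eqn:6.7}, and then to absorb the lower–order terms by taking $\lambda$ small.

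First I would substitute $u_0=\lambda\varphi_\delta$, so that $\Delta u_0=\lambda\,\Delta\varphi_\delta$ and $u_0^p=\lambda^p\varphi_\delta^p$; thus \eqref{eqn:6.4} (understood, like \eqref{eqn:6.3}, in the weak sense, since $\varphi_\delta$ is merely $C^1$ across $\rho=\delta$) amounts to
\[
\lambda\,\Delta\varphi_\delta+d\,\lambda^{p-r\gamma}\varphi_\delta^p\ \geq\ 2\lambda^p\varphi_\delta^p .
\]
Testing \eqref{eqn:6.3} against a nonnegative function and using $\varphi_\delta>0$, it is enough to establish the pointwise bound $-Na\,\lambda\,\varphi_\delta^p+d\,\lambda^{p-r\gamma}\varphi_\delta^p\geq 2\lambda^p\varphi_\delta^p$; cancelling $\varphi_\delta^p>0$ and dividing by $\lambda^{p-r\gamma}>0$, this is exactly
\[
d\ \geq\ 2\lambda^{r\gamma}+Na\,\lambda^{1-p+r\gamma}.
\]

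Now the hypothesis $\frac{p-1}{r}<\gamma$ is precisely $1-p+r\gamma>0$, and $r\gamma>0$ as well, so both exponents on the right-hand side are positive and that right-hand side tends to $0$ as $\lambda\downarrow 0$. Moreover $p>\frac{N}{N-2}$ is equivalent to $N>pa$ (hence also $N>a$), so estimate \eqref{ine} of Lemma \ref{kkl1} applies with $m=p$ and $m=1$; this gives $0<\alpha_1<\infty$ by \eqref{at1} together with $0<\inf_{0<\delta<1}\bar{\varphi}_\delta\leq\sup_{0<\delta<1}\bar{\varphi}_\delta<\infty$, whence $d>0$ in \eqref{eqn:6.7}. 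Consequently one may pick $\lambda_0=\lambda_0(d)>0$ so small that $2\lambda^{r\gamma}+Na\,\lambda^{1-p+r\gamma}\leq d$ for every $0<\lambda\leq\lambda_0$, and reading the equivalences backwards yields \eqref{eqn:6.4}.

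There is no genuine obstacle here; the only subtlety is the bookkeeping of the weak formulation — all the manipulations above consist of testing the distributional inequality \eqref{eqn:6.3} against nonnegative functions and multiplying by positive constants, operations that preserve the inequality. It is also worth noting that only $d>0$ is used, not the precise expression in \eqref{eqn:6.7}, whose shape is dictated by the comparison/subsolution argument of the subsequent lemmas rather than by \eqref{eqn:6.4} itself.
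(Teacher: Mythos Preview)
Your proof is correct and follows essentially the same approach as the paper: substitute $u_0=\lambda\varphi_\delta$, invoke the weak bound $\Delta\varphi_\delta\geq -Na\,\varphi_\delta^p$ from \eqref{eqn:6.3}, cancel $\varphi_\delta^p$, and reduce to a scalar inequality in $\lambda$ that holds for small $\lambda$ because $1-p+r\gamma>0$. The only cosmetic difference is that the paper divides through by $\lambda$ to obtain $d\lambda^{p-1-r\gamma}\geq Na+2\lambda^{p-1}$ (and lets the left side blow up), whereas you divide by $\lambda^{p-r\gamma}$ to obtain $d\geq 2\lambda^{r\gamma}+Na\,\lambda^{1-p+r\gamma}$ (and let the right side vanish); these are equivalent rearrangements of the same inequality.
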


\begin{proof}
Note that inequality (\ref{eqn:6.4}) is equivalent to
\[ \Delta \varphi_\delta+d\lambda^{-r\gamma+p-1}\varphi_\delta^p\geq 2\lambda^{p-1}\varphi_\delta^p \]
which is reduced to
\[ d\lambda^{-r\gamma+p-1}\geq Na+2\lambda^{p-1} \]
due to (\ref{eqn:6.3}). Then the result follows since $\frac{p-1}{r}<\gamma$.
\end{proof}
Henceforth we fix $0<\lambda\leq \lambda_0=\lambda_0(d)$ so that (\ref{eqn:6.4}) is satisfied.  Given $0<\delta<1$, let $T_\delta>0$ be the maximal existence time of the solution to (\ref{bsd1})-(\ref{bsd3}) with initial data of the form (\ref{eqn:6.1}).

In order, to get rid off the linear dissipative term $-u$ we introduce $z=e^tu$, which then satisfies
\bge
&&z_t= \Delta z +K(t)z^p, \quad\mbox{in}\quad Q\equiv \Omega\times (0,T_\delta),\label{tbsd3}\\
&&\displaystyle\frac{\partial z}{\partial \nu}=0, \quad\mbox{on}\quad \partial \Omega\times (0,T_\delta),\label{tbsd3a}\\
&& z(x,0)=u_0(\vert x\vert),\quad\mbox{in}\quad \Omega,\quad\label{tbsd3b}
 \ege
where
\begin{equation} \label{nnt}
K(t)=\frac{e^{(1+r\gamma-p)t}}{\Big(\displaystyle\avint z^r \Big)^{\gamma}}.
\end{equation}
It is clear that $u$ blows up in finite time if and only if $z$ does so.

Due to (\ref{eqn:1.17}) we have
\begin{equation}
0< K(t)=\frac{e^{(1-p)t}}{\Big(\displaystyle\avint u^r \Big)^{\gamma}}\leq C<\infty,
 \label{eqn:6.8}
\end{equation}
thus (\ref{tbsd3}) entails
\begin{equation}
\frac{d\bar{z}}{dt}=K(t)\avint z^p
 \label{eqn:6.8h}
\end{equation}
and we finally derive the following estimate
\begin{equation}
\bar{z}(t)\geq \bar{z}(0)=\avint u_0.
 \label{eqn:6.9}
\end{equation}
Another helpful estimate of $z$ is given by the following lemma
\begin{lemma}\label{lem3}
There holds that
\begin{equation}
\rho^Nz(\rho,t)\leq \bar{z}(t) \quad\mbox{in}\quad(0,1)\times (0,T_{\de}),
 \label{eqn:6.10}
\end{equation}
and
\begin{equation}
z_\rho\left(\frac{3}{4},t\right)\leq -c, \ \ 0\leq t<T_\delta,
 \label{eqn:6.11}
\end{equation}
for any $0<\delta<1$.
\end{lemma}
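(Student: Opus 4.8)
The plan is to establish the two inequalities separately; the first is elementary and the second is the substantive one.

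For $\rho^N z(\rho,t)\le\bar z(t)$: as already recorded above, the maximum principle forces $u$, hence $z=e^tu$, to be radially decreasing, so $z_\rho(\rho,t)\le 0$. Writing the spatial average in polar coordinates, $\bar z(t)=N\int_0^1 z(s,t)\,s^{N-1}\,ds$, and using $z(s,t)\ge z(\rho,t)$ for $0\le s\le\rho$ together with $\int_0^\rho s^{N-1}\,ds=\rho^N/N$, one obtains $\bar z(t)\ge N\int_0^\rho z(s,t)\,s^{N-1}\,ds\ge\rho^N z(\rho,t)$. There is no difficulty here.

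For $z_\rho(3/4,t)\le -c$ the plan is a barrier/comparison argument localized near the fixed sphere $\{\rho=3/4\}$, in the spirit of \cite[Proposition~3.3]{ln09} (which treats $r=1$) and ultimately of \cite{hy95,fmc85}. First I would record two facts. (i) Since $\varphi_\delta\ge1$ on $[0,1]$ (indeed $\varphi_\delta\ge\delta^{-a}>1$ on $[0,\delta]$ and $\varphi_\delta=\rho^{-a}\ge1$ on $[\delta,1]$), one has $u_0=\lambda\varphi_\delta\ge\lambda$; as $K(t)z^p\ge0$ makes $z$ a supersolution of the heat equation with Neumann data, comparison with the constant subsolution $\lambda$ yields $z\ge\lambda$ on $\Omega\times[0,T_\delta)$. (ii) For $\delta<1/2$ one has $\varphi_\delta(\rho)=\rho^{-a}$ on $[1/2,1]$, so there the datum $z(\rho,0)=\lambda\rho^{-a}$ is independent of $\delta$ and has strictly negative slope at $\rho=3/4$. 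On the annulus $\{1/2<\rho<1\}$, $z$ is a supersolution of $z_t=\Delta z$, and I would build an explicit positive, radially decreasing subsolution $\underline z$ lying below $z$ on the parabolic boundary — using $z\ge\lambda$ on $\{\rho=1/2\}\cup\{\rho=1\}$ from (i) and $z(\cdot,0)=\lambda\rho^{-a}\ge\lambda$ from (ii); a radial harmonic function $c_1+c_2\rho^{2-N}$ (time-independent and harmonic, hence a subsolution of $z_t=\Delta z+K(t)z^p$ because $-K\underline z^p\le0$) with $c_1+c_2\,2^{N-2}=\lambda$ and $c_2>0$ small is a convenient choice. Comparison then pins $z\ge\underline z$ on $[1/2,1]$.

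The genuinely delicate step — and the one I expect to be the main obstacle — is converting this lower pinning into the \emph{pointwise interior gradient bound} $z_\rho(3/4,t)\le -c$ uniformly in $t\in[0,T_\delta)$: a spatial lower barrier controls $z$ from below but not its derivative at an interior radius. I see two routes. Either arrange the barrier to touch $z$ along a sphere (say $\{\rho=1\}$, where $z_\rho=0$ is known) and transport the information inward via a Hopf-type estimate; or — probably cleaner — run a Friedman--McLeod argument for $J=z_\rho+\varepsilon\,\eta(\rho)$, where differentiating the radial equation shows $J$ satisfies a linear parabolic inequality whose zeroth-order term carries the \emph{nonnegative} factor $K(t)\,p\,z^{p-1}$ with the favorable sign, so that $J\le0$ is propagated from the parabolic boundary. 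The crux is selecting $\eta\ge0$ with $\eta(1)=0$ (to respect the Neumann condition, where $z_\rho=0$), flat enough at $\rho=0$, and with the right sign of $\Delta\eta-\tfrac{N-1}{\rho^2}\eta$ near $\rho=3/4$ — possibly after localizing to a sub-annulus of $(1/2,1)$ — while keeping $\varepsilon$ independent of $\delta$ (the initial compatibility $J(\cdot,0)\le0$ being guaranteed by the explicit profile \eqref{id}). Finally, the only essential novelty relative to the $r=1$ argument of \cite{ln09} is that the non-local coefficient $K(t)$ both enters the potential and decays in $t$ while $z$ grows; this is absorbed using solely $0<K(t)\le C$ from \eqref{eqn:6.8} and the lower bound \eqref{eqn:1.17}.
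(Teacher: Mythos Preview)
Your treatment of \eqref{eqn:6.10} is correct and matches the paper exactly: radial monotonicity $z_\rho\le 0$ together with the elementary integration bound.

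For \eqref{eqn:6.11} you are missing the paper's key simplification, and your proposed routes both run into a concrete obstruction. The paper does not work with $z$ or with $z_\rho$ but with the weighted derivative
\[
w:=\rho^{N-1}z_\rho,
\]
which satisfies the \emph{clean} linear equation $\mathcal H[w]:=w_t-w_{\rho\rho}+\tfrac{N-1}{\rho}w_\rho-pK(t)z^{p-1}w=0$ with $w(0,t)=w(1,t)=0$ and $w(\cdot,0)<0$. The point of the weight $\rho^{N-1}$ is exactly to kill the extra zeroth-order term $-\tfrac{N-1}{\rho^2}z_\rho$ that appears when one differentiates the radial equation --- the very term you flag in ``the right sign of $\Delta\eta-\tfrac{N-1}{\rho^2}\eta$''. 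The maximum principle then gives $w\le 0$ globally; on the annulus $(1/2,1)$, $w$ is now a subsolution of $\partial_t-\partial_{\rho\rho}+\tfrac{N-1}{\rho}\partial_\rho$ (since $pKz^{p-1}w\le 0$), with $w(1/2,t)\le 0$, $w(1,t)=0$, and explicit $\delta$-independent initial data $w(\rho,0)=\rho^{N-1}u_0'(\rho)\le -c$ on $(1/2,1)$. Comparison with the linear problem on the annulus gives $w(3/4,t)\le -c$, hence \eqref{eqn:6.11}. No barrier for $z$ is needed.

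Your Friedman--McLeod proposal $J=z_\rho+\varepsilon\eta(\rho)$ cannot be closed as stated. You need $\eta\ge 0$ with $\eta(1)=0$ (for the Neumann condition) and $\Delta\eta-\tfrac{N-1}{\rho^2}\eta\ge 0$; but the operator $-\Delta+\tfrac{N-1}{\rho^2}$ is positive (its kernel in radial coordinates is spanned by $\rho$ and $\rho^{1-N}$), so integrating against $\eta$ shows that no nontrivial nonnegative $\eta$ vanishing at both endpoints can satisfy the inequality. Localizing to a sub-annulus does not help: if $\eta$ is positive at the inner endpoint $\rho_*$, you need $z_\rho(\rho_*,t)\le -\varepsilon\eta(\rho_*)$, which is precisely the kind of quantitative bound you are trying to prove. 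The substitution $w=\rho^{N-1}z_\rho$ is exactly the Friedman--McLeod weight that makes the boundary conditions and the sign of the residual term line up simultaneously. Your barrier-for-$z$ route, as you yourself note, controls values but not interior gradients, and the ``Hopf at $\rho=1$'' idea fails because $z_\rho(1,t)=0$ is already known and carries no quantitative information inward.
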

\begin{proof}
%The above results are nothing but Lemma 3.2 and inequality (24) in \cite{ln09}.
Set $w=\rho^{N-1}z_{\rho}$, then $w$ satisfies
\bgee
&& \mathcal{H}[w]=0, \quad\mbox{in}\quad (0,1)\times (0,T_{\de}),\label{eqn:6.12}\\
&& w(0,t)=w(1,t)=0,\quad\mbox{for}\quad t\in(0,T_{\de}), \label{eqn:6.12a}\\
&& w(\rho,0)<0,\quad\mbox{for}\quad 0<\rho<1, \label{eqn:6.12b}
\egee
where
\[ \mathcal{H}[w]\equiv w_t-w_{\rho\rho}+\frac{N-1}{\rho}w_\rho-p K(t)z^{p-1}w. \]
The maximum principle now implies $w\leq 0$, and hence $z_{\rho}\leq 0$ in $(0,1)\times (0,T_{\de})$. Then inequality (\ref{eqn:6.10}) follows since
\begin{eqnarray*}
\rho^Nz(\rho,t) & = & z(\rho,t)\int_0^\rho Ns^{N-1}ds\leq \int_0^\rho Nz(s,t)s^{N-1}ds \\
& \leq & \int_0^1Nz(s,t)s^{N-1}ds=\avint z=\bar{z}(t).
\end{eqnarray*}

Once $w\leq 0$ is proven, we have
\bgee
& & w_t-w_{\rho\rho}+\frac{N-1}{\rho}w_\rho=pK(t)z^{p-1}w\leq 0 \quad\mbox{in}\quad (0,1)\times (0,T_{\de}),\\
& & w\left(\frac{1}{2},t\right)\leq0,\quad w\left(1,t\right)\leq 0,\quad\mbox{for}\quad t\in (0,T_{\de}) \\
& & w(\rho,0)=\rho^{N-1}u'_{0}(\rho)\leq -c, \quad\mbox{for}\quad\frac{1}{2}<\rho<1,
\egee
which entails $w\leq -c$ in $(\frac{1}{2},1)\times (0, T_\delta)$, and finally (\ref{eqn:6.11}) holds.
\end{proof}

\begin{lemma}\label{kkl2}
Given $\varepsilon>0$ and $1<q<p$ then $\psi$ defined as
\be\label{ik1}
 \psi:=\rho^{N-1}z_\rho+\varepsilon\cdot\frac{\rho^Nz^q}{\bar{z}^{\gamma+1}},
\ee
satisfies
\begin{eqnarray}
& & \mathcal{H}[\psi]\leq -\frac{2q\varepsilon}{\bar{z}^{\gamma+1}}z^{q-1}\psi+\frac{\varepsilon \rho^Nz^q}{\bar{z}^{2(\gamma+1)}}\Big\{2q\varepsilon z^{q-1}-(\gamma+1)\bar{z}^{\gamma-r\gamma}\avint z^p  \nonumber\\
& & \quad -(p-q)z^{p-1}\bar{z}^{\gamma+1-r\gamma}\Big\} \quad \mbox{in}\quad (0,1)\times (0,T_{\de}).
 \label{eqn:6.13h}
\end{eqnarray}
 \label{lem:6.4}
\end{lemma}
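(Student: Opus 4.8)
The plan is to reduce the statement to a single direct computation by exploiting the linearity of the operator $\mathcal{H}$ together with the identity $\mathcal{H}[\rho^{N-1}z_\rho]=0$ established in the proof of Lemma \ref{lem3}. Writing $v=\rho^{N}z^{q}\bar z^{-(\gamma+1)}$, so that $\psi=\rho^{N-1}z_\rho+\varepsilon v$, and noting that the zeroth–order coefficient $pK(t)z^{p-1}$ of $\mathcal{H}$ does not involve the argument, linearity yields at once $\mathcal{H}[\psi]=\varepsilon\,\mathcal{H}[v]$. Thus everything comes down to computing $\mathcal{H}[v]$ and then re-expressing it through $\psi$.

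First I would compute $v_t$: differentiating $z^{q}$ in time through the radial form $z_t=z_{\rho\rho}+\tfrac{N-1}{\rho}z_\rho+K(t)z^{p}$ of \eqref{tbsd3}, and differentiating the prefactor $\bar z^{-(\gamma+1)}$ through $\tfrac{d\bar z}{dt}=K(t)\avint z^{p}$ from \eqref{eqn:6.8h}. Then I would compute $-v_{\rho\rho}+\tfrac{N-1}{\rho}v_\rho$ directly from $v=\bar z^{-(\gamma+1)}\rho^{N}z^{q}$ and subtract $pK(t)z^{p-1}v$. The decisive feature is that the top–order terms in $z$ cancel: the term $q z^{q-1}z_{\rho\rho}$ produced by $v_t$ (when $z_t$ is substituted) is annihilated by the same term with opposite sign inside $-v_{\rho\rho}+\tfrac{N-1}{\rho}v_\rho$ — this is precisely the cancellation mechanism behind $\mathcal{H}[\rho^{N-1}z_\rho]=0$. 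After it, $\mathcal{H}[v]$ collapses to four terms: a first–order term proportional to $\rho^{N-1}z^{q-1}z_\rho\,\bar z^{-(\gamma+1)}$; a curvature term $-q(q-1)\rho^{N}z^{q-2}z_\rho^{2}\,\bar z^{-(\gamma+1)}$; the time–derivative–of–prefactor term, proportional to $-(\gamma+1)K(\avint z^{p})\rho^{N}z^{q}\,\bar z^{-(\gamma+2)}$; and the reaction mismatch $-(p-q)K\rho^{N}z^{p-1+q}\,\bar z^{-(\gamma+1)}$.

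To bring this into the form \eqref{eqn:6.13h} I would substitute $\rho^{N-1}z_\rho=\psi-\varepsilon\rho^{N}z^{q}\bar z^{-(\gamma+1)}$ into the first–order term; this splits off the summand $-\tfrac{2q\varepsilon}{\bar z^{\gamma+1}}z^{q-1}\psi$ and, after the common factor $\varepsilon\rho^{N}z^{q}\bar z^{-2(\gamma+1)}$ is pulled out of what remains, contributes $2q\varepsilon z^{q-1}$ inside the braces. Since $1<q$ and $z>0$ by \eqref{eqn:1.17}, the curvature term is nonpositive and may simply be discarded, turning the identity into the desired inequality. The braces then read $\{\,2q\varepsilon z^{q-1}-(\gamma+1)K\bar z^{\gamma}\avint z^{p}-(p-q)K\bar z^{\gamma+1}z^{p-1}\,\}$, and it remains to replace the nonlocal coefficient $K(t)=e^{(1+r\gamma-p)t}/(\avint z^{r})^{\gamma}$ from \eqref{nnt} by $\bar z^{-r\gamma}$: here one uses $r\ge 1$, so that Jensen's inequality relates $\avint z^{r}$ to $\bar z^{r}$, together with the Turing–type bound $\tfrac{p-1}{r}<\gamma$, which fixes the sign of the exponential prefactor, so that the substitution is consistent with an upper bound for $\mathcal{H}[\psi]$; this produces exactly the exponents $\gamma-r\gamma$ and $\gamma+1-r\gamma$ of $\bar z$ appearing in \eqref{eqn:6.13h}.

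The bulk of the labour sits in the second step, but it is purely mechanical once one trusts that the top–order cancellation is forced by $\mathcal{H}[\rho^{N-1}z_\rho]=0$. The one place where genuine care is needed is the last step: keeping the signs straight when trading the nonlocal coefficient $K(t)$ for a power of $\bar z$, so that the replacement strengthens rather than violates the inequality — this is exactly where the structural hypotheses $r\ge 1$ and $\tfrac{p-1}{r}<\gamma$ enter.
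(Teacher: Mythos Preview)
Your computational outline is correct and matches the route taken in \cite{ln09} (to which the paper simply defers): use $\mathcal{H}[\rho^{N-1}z_\rho]=0$ to reduce to $\mathcal{H}[\psi]=\varepsilon\mathcal{H}[v]$, expand, observe the top--order cancellation, substitute $\rho^{N-1}z_\rho=\psi-\varepsilon v$, and drop the nonpositive curvature term $-q(q-1)\rho^{N}z^{q-2}z_\rho^{2}\bar z^{-(\gamma+1)}$. After this you correctly arrive at
\[
\mathcal{H}[\psi]\leq -\frac{2q\varepsilon}{\bar z^{\gamma+1}}z^{q-1}\psi+\frac{\varepsilon\rho^{N}z^{q}}{\bar z^{2(\gamma+1)}}\Big\{2q\varepsilon z^{q-1}-(\gamma+1)K(t)\bar z^{\gamma}\avint z^{p}-(p-q)K(t)\bar z^{\gamma+1}z^{p-1}\Big\}.
\]

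The gap is in your final step. To pass from this to \eqref{eqn:6.13h} you must replace $K(t)$ by $\bar z^{-r\gamma}$, and since the $K$--terms inside the braces carry a minus sign while the prefactor $\varepsilon\rho^{N}z^{q}\bar z^{-2(\gamma+1)}$ is positive, this replacement preserves the upper bound \emph{only if} $K(t)\geq \bar z^{-r\gamma}$. But for $r\geq 1$ Jensen's inequality gives $\avint z^{r}\geq \bar z^{r}$, hence $(\avint z^{r})^{\gamma}\geq \bar z^{r\gamma}$ and therefore $K(t)=e^{(1+r\gamma-p)t}(\avint z^{r})^{-\gamma}\leq e^{(1+r\gamma-p)t}\bar z^{-r\gamma}$ --- an \emph{upper} bound on $K(t)$, not the lower bound you need. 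The Turing condition $\tfrac{p-1}{r}<\gamma$ makes the exponential factor $\geq 1$, which is neutral here; it neither rescues nor contradicts the required inequality. So the two ingredients you invoke pull in opposite directions and do not combine to give $K(t)\geq \bar z^{-r\gamma}$ when $r>1$.

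Note that for $r=1$ (the case actually treated in \cite{ln09}, to which the paper refers) there is no issue: then $\avint z^{r}=\bar z$ exactly, and the Turing condition alone yields $K(t)=e^{(1+\gamma-p)t}\bar z^{-\gamma}\geq \bar z^{-\gamma}=\bar z^{-r\gamma}$. For general $r>1$ the clean replacement $K(t)\mapsto \bar z^{-r\gamma}$ does not follow from the argument you sketch; one either keeps $K(t)$ in the inequality (which is what the subsequent application in Lemma~\ref{nkl3} really needs, since there $K(t)$ is bounded below via \eqref{eqn:6.18} and H\"older) or one must supply a genuine lower bound for $K(t)$ by other means.
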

The proof of Lemma \ref{kkl2} follows the same steps as the proof of inequality (28) in \cite{ln09}, which holds for $r=1,$ and thus it is omitted.

Observe that when $p>\frac{N}{N-2}$ there is $1<q<p$ such that $N>\frac{2p}{q-1}$ and thus the following quantities
\begin{equation}
A_1\equiv\sup_{0<\de<1}\frac{1}{\bar{u}_0^{\mu}} \avint u_0^p=\lambda^{\mu-p}\alpha_1, \quad A_2\equiv\inf_{0<\de<1}\frac{1}{\bar{u}_0^{\mu}} \avint u_0^p=\lambda^{\mu-p}\alpha_2
 \label{eqn:6.14}
\end{equation}
are finite due to \eqref{at1}. The following result, which is a modification of Lemma 3.3 in \cite{ln09} for $r=1,$ provides a key estimate of the $L^p-$norm of $z$ in terms of $A_1$ and $A_2$ and, since it is a core result for the proof of Theorem  \ref{thm:6.1}, we will sketch its proof shortly.

\begin{proposition}\label{lem4}
There exist $0<\delta_0<1$ and $0<t_0\leq 1$ independent of any $0<\delta\leq \delta_0,$ such that the following estimate is satisfied
\begin{equation}\label{lue}
\frac{1}{2} A_2\ol{z}^{\mu}\leq  \avint z^p\,dx\leq 2 A_1 \ol{z}^{\mu},
\end{equation}
for any $0<t<\min\{t_0,T_{\de}\}.$
\end{proposition}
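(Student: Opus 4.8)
The plan is a continuity (bootstrap) argument for the quotient $Q(t):=\big(\avint z^p\,dx\big)\big/\bar z(t)^{\mu}$. At $t=0$ we have $z(\cdot,0)=u_0$, so by the very definitions \eqref{eqn:6.14} of $A_1,A_2$ one has $A_2\le Q(0)\le A_1$, i.e.\ \eqref{lue} holds at $t=0$ with room to spare. Let
\[
t_\ast:=\sup\Big\{\tau\in\big(0,\min\{1,T_\delta\}\big]\ :\ \tfrac12 A_2\le Q(t)\le 2A_1\ \text{ for all }t\in[0,\tau]\Big\},
\]
which is positive by continuity. The point is to bound $t_\ast$ from below by a constant depending only on $\lambda$ and the exponents, \emph{not} on $\delta$; once this is done, \eqref{lue} follows with that constant in the role of $t_0$.

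First I would collect a priori bounds valid on $[0,t_\ast)$. Since $\bar z$ is nondecreasing by \eqref{eqn:6.8h} with $\bar z(0)=\avint u_0=\lambda\avint\varphi_\delta$, and $\avint\varphi_\delta$ lies between two positive constants uniformly in $\delta$ by Lemma \ref{kkl1}(ii) (with $m=1$, using $N>a$), while $\frac{d\bar z}{dt}=K\avint z^p\le C\bar z^{\mu}$ by \eqref{eqn:6.8} and the bootstrap hypothesis, an elementary ODE comparison yields $s_1>0$ \emph{independent of $\delta$} with $c_\lambda\le\bar z(t)\le 2\bar z(0)\le C_\lambda$ on $[0,\min\{t_\ast,s_1\})$; also $z=e^tu\ge\inf_\Omega u_0=\lambda$ throughout, by the argument of Proposition \ref{lbd}. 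With these in hand I would apply the maximum principle to $\psi$ from Lemma \ref{kkl2} on the sub-cylinder $(0,\tfrac34)\times(0,t_1)$, $t_1:=\min\{t_\ast,s_1\}$: choosing $1<q<p$ with $N(q-1)>2p$ (possible because $p>\frac{N}{N-2}$) and $\varepsilon>0$ small, I would check that (i) the curly bracket in \eqref{eqn:6.13h} is $\le0$ — for $z$ large this is immediate from $p>q$, since $(p-q)z^{p-1}\bar z^{\gamma+1-r\gamma}$ then dominates $2q\varepsilon z^{q-1}$, while for the remaining bounded range of $z$, recalling $z\ge\lambda$, it follows once $\varepsilon$ is small, using the uniform bounds $\bar z\ge c_\lambda$ and $\avint z^p\ge\tfrac12 A_2\bar z^{\mu}\ge c$; (ii) $\psi(\cdot,0)\le0$ on $(0,\tfrac34)$ — a direct computation with \eqref{id}, using $\varphi_\delta'<0$, $a=\frac{2}{p-1}$, $q<p$, so that in each branch of \eqref{id} the positive term carries a $\rho$-power not less than the negative one together with a coefficient absorbable into $\varepsilon$; (iii) $\psi(0,t)=0$, while $\psi(\tfrac34,t)\le0$ follows from \eqref{eqn:6.11} and $z(\tfrac34,t)\le(\tfrac43)^N\bar z\le C_\lambda$ of Lemma \ref{lem3}. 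Each smallness condition on $\varepsilon$ is independent of $\delta$, so the maximum principle (in the radial form already used for Lemma \ref{lem3}) gives $\psi\le0$, that is
\[
z_\rho(\rho,t)\le-\varepsilon\,\rho\,z(\rho,t)^{q}\,\bar z(t)^{-\gamma-1}\qquad\text{on }(0,\tfrac34)\times(0,t_1).
\]

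From this inequality I would integrate in $\rho$ to obtain the pointwise decay $z(\rho,t)\le C\,\bar z^{(\gamma+1)/(q-1)}\rho^{-2/(q-1)}$ on $(0,\tfrac34)$; combined with the monotonicity of $z$ in $\rho$, the bound $z(\tfrac34,t)\le(\tfrac43)^N\bar z\le C_\lambda$ of Lemma \ref{lem3}, and $N(q-1)>2p$, this confines $z$ to a family of profiles whose $p$-th powers are dominated by a single $\delta$-independent integrable function, whence $\avint z^p\le(A_1+o(1))\bar z^{\mu}$ as $t_1\downarrow0$, hence $\le2A_1\bar z^{\mu}$ once $s_1$ is small. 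For the lower bound I would discard the nonnegative reaction term, so that $z\ge e^{t\Delta}u_0$; since on any fixed annulus $\{\rho_0\le\rho\le1\}$ one has $u_0=\lambda\rho^{-a}$ independently of $\delta$ (for $\delta<\rho_0$), while the tip $\{\rho<\delta\}$ contributes only $O(\delta^{N-ap})$ to $\avint u_0^p$ by Lemma \ref{kkl1}(ii), one gets $\avint z^p\ge(1-o(1))\avint u_0^p\ge(1-o(1))A_2\bar z(0)^{\mu}\ge\tfrac12 A_2\bar z(t)^{\mu}$ for $t<t_1$ with $s_1$ small, using also $\bar z(t)\le(1+o(1))\bar z(0)$. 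Thus on $[0,t_1)$ both halves of \eqref{lue} hold with strict slack, so by continuity either $t_1<t_\ast$ or $t_1=s_1$; in either case $t_\ast\ge s_1$, the desired $\delta$-independent lower bound.

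The main obstacle is making this two-sided $L^p$-estimate \emph{uniform in $\delta$}. A naive energy argument is hopeless: for the spiky data \eqref{id} the quantities $\avint z^{2p-1}$ and $\avint u_0^{p-2}|\nabla u_0|^2$ that appear upon differentiating $\avint z^p$ blow up as $\delta\downarrow0$, so the uniform control must be extracted from the geometric confinement of the profile of $z$ furnished by Lemmas \ref{lem3} and \ref{kkl2} — which is precisely why the admissible choice of $q$ with $N(q-1)>2p$ and the lower bound \eqref{eqn:1.17} on the nonlocal coefficient (entering through \eqref{eqn:6.8}) are built into the hypotheses. The second, more clerical, delicate point is verifying that the construction of $\psi$ (the choice of $\varepsilon$, the boundary estimate at $\rho=\tfrac34$, and the sign of $\psi(\cdot,0)$) goes through with all constants independent of $\delta$; this is what is meant by ``a modification of Lemma 3.3 in \cite{ln09}'', the $r$-dependence being carried through by the $\bar z$-weights already visible in \eqref{eqn:6.13h}.
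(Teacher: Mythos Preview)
Your bootstrap skeleton, the ODE control on $\bar z$, and the use of the $\psi$-comparison from Lemma~\ref{kkl2} to obtain the decay $z(\rho,t)\le C\rho^{-2/(q-1)}$ on $(0,\tfrac34)$ are exactly the ingredients the paper assembles (Lemmas~\ref{nkl2} and~\ref{nkl3}). The divergence is at the last step, and there is a genuine gap in your upper bound.

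From the uniform domination $z(\rho,t)^p\le F(\rho):=C\rho^{-2p/(q-1)}\in L^1$ you write ``whence $\avint z^p\le(A_1+o(1))\bar z^{\mu}$ as $t_1\downarrow0$''. Domination only gives $\avint z^p\le \avint F<\infty$; to recover the \emph{sharp} constant $A_1$ you are implicitly using that $\avint z^p(\cdot,t)\to\avint u_0^p$ as $t\to0$, and the whole point of the Proposition is that this must hold \emph{uniformly in $0<\delta\le\delta_0$}. Dominated convergence gives the limit for each fixed $\delta$ but supplies no rate, so the $o(1)$ you write could a priori depend on $\delta$ and the bootstrap would not close. Concretely, after cutting out $B(0,\rho_0)$ (where both $\int z^p$ and $\int u_0^p$ are $O(\rho_0^{N-2p/(q-1)})$ by your decay bound), you still need $\int_{\Omega\setminus B(0,\rho_0)}z^p(\cdot,t)$ close to $\int_{\Omega\setminus B(0,\rho_0)}u_0^p$ with a $\delta$-independent modulus; nothing in your outline provides this, since $z$ on the annulus is influenced by the spike through diffusion.

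The paper fills precisely this gap by a parabolic regularity argument: setting $\theta=z/\bar z^{\mu/p}$, one checks that $\theta$ and the source terms of its equation are bounded in $L^\infty\big((\Omega\setminus B(0,\rho_0))\times(0,t_1)\big)$ \emph{uniformly in $\delta$} (this uses the decay bound, \eqref{eqn:6.10}, \eqref{eqn:6.18h} and \eqref{eqn:1.17}). DeGiorgi--Nash--Moser then yields a $\delta$-independent H\"older modulus in $t$, hence a $t_2>0$ with
\[
\Big\|\theta^p(\cdot,t)-\theta^p(\cdot,0)\Big\|_{L^1(\Omega\setminus B(0,\rho_0))}\le \tfrac{A_2}{8}|\Omega|,\qquad 0<t<\min\{t_2,t_0(\delta)\},
\]
for all $0<\delta\le\delta_0$. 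Combined with the inner-ball bound this gives $\tfrac{5}{8}A_2\le Q(t)\le\tfrac{11}{8}A_1$, which is strict slack on \emph{both} sides simultaneously and closes the bootstrap. Your lower-bound idea via $z\ge e^{t\Delta}u_0$ can be made to work for that half (split $u_0$ into its $\delta$-independent outer profile plus the nonnegative spike, so $e^{t\Delta}u_0\ge e^{t\Delta}u_0^{(1)}$ with $u_0^{(1)}$ fixed), but for the upper half you need some equicontinuity-in-$t$ on the annulus, and that is exactly the DeGiorgi--Nash--Moser step you are missing.
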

The proof of the above proposition requires some auxiliary results shown below.

Take $0<t_0(\delta)<T_\delta$ to be the maximal time for which inequality (\ref{lue}) holds true in $0<t<t_0(\delta),$ then we have
\begin{equation}
\frac{1}{2}A_2\bar{z}^\mu\leq \avint z^p\leq 2A_1\bar{z}^\mu, \quad\mbox{for}\quad 0<t<t_0(\delta).
 \label{eqn:6.18}
\end{equation}
We consider the case $t_0(\delta)\leq 1,$ since otherwise there is nothing to prove.

Now the first auxiliary result states

\begin{lemma}\label{nkl2}
There exists $0<t_1<1$ such that
\begin{equation}
\bar{z}(t)\leq 2\bar{u}_0, \quad 0<t<\min\{ t_1, t_0(\delta)\},
 \label{eqn:6.18h}
\end{equation}
for any $0<\delta<1$.
 \label{lem:6.6}
\end{lemma}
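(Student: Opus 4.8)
The plan is to bootstrap from the lower bound $\bar z(0)=\bar u_0$ on the average (which, together with $\avint z^p\le 2A_1\bar z^\mu$, controls the growth of $\bar z$ via \eqref{eqn:6.8h}) and then close the argument on a short time interval. First I would use \eqref{eqn:6.8h} together with the a priori upper bound in \eqref{eqn:6.18}, namely $\avint z^p\le 2A_1\bar z^\mu$, valid for $0<t<t_0(\delta)$, to obtain the differential inequality
\[
\frac{d\bar z}{dt}=K(t)\avint z^p\le 2A_1\,\overline{K}\,\bar z^\mu,\qquad 0<t<t_0(\delta),
\]
where $\overline{K}=\sup_{t\ge0}K(t)<\infty$ is the uniform bound furnished by \eqref{eqn:6.8} (which in turn rests on Remark \ref{ts3}/\eqref{eqn:1.17}). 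Since $\bar z(0)=\bar u_0$ by \eqref{eqn:6.9}, a direct integration of this scalar ODE inequality shows that $\bar z(t)$ cannot reach $2\bar u_0$ before a time $t_1$ that depends only on $A_1$, $\mu$, $\overline{K}$ and $\bar u_0$ — and crucially not on $\delta$, because $A_1$ is a $\delta$-independent supremum by \eqref{eqn:6.14} and the other constants are $\delta$-independent as well.

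The one subtlety is the exponent $\mu$: if $\mu>1$ the bound $\dot{\bar z}\le C\bar z^\mu$ gives only a finite blow-up time for the comparison ODE, so one gets $\bar z(t)\le 2\bar u_0$ on an interval $[0,t_1]$ with $t_1=t_1(\bar u_0,A_1,\mu,\overline K)$ strictly positive; if $\mu\le 1$ the comparison solution is global and the conclusion is even easier. In either case, shrinking $t_1$ if necessary so that $t_1<1$, I obtain exactly \eqref{eqn:6.18h}: $\bar z(t)\le 2\bar u_0$ for $0<t<\min\{t_1,t_0(\delta)\}$, with $t_1$ independent of $0<\delta<1$. I would phrase this as: let $\Phi$ solve $\Phi'=2A_1\overline K\,\Phi^\mu$, $\Phi(0)=\bar u_0$, note $\bar z\le\Phi$ on $[0,t_0(\delta))$ by a standard comparison for scalar ODEs, and pick $t_1\in(0,1)$ with $\Phi(t_1)\le 2\bar u_0$.

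The main obstacle — really the only point needing care — is making sure every constant entering $t_1$ is genuinely $\delta$-independent. The bound $\overline K<\infty$ comes from \eqref{eqn:6.8}, which uses the uniform-in-time lower estimate \eqref{eqn:1.17} on $\avint u^r$; that estimate is $\delta$-uniform since it only used $0<\delta\ll1$ in Theorem \ref{thm:1.1} and the resulting constant $c$ there does not see the profile parameter. The constant $A_1$ is a supremum over $0<\delta<1$ by its very definition \eqref{eqn:6.14}, hence $\delta$-independent, and finite by \eqref{at1}. With $\bar u_0=\lambda\avint\varphi_\delta$, one should also observe $\bar u_0$ stays in a fixed bounded range as $\delta\downarrow0$ by \eqref{ine}, so that $t_1$ can be chosen uniformly; alternatively, since $\lambda$ is already fixed in advance, one simply keeps $\bar u_0$ in the constant. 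Once these uniformities are checked, the integration is routine and \eqref{eqn:6.18h} follows.
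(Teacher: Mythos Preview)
Your overall strategy coincides with the paper's: plug the a priori bound $\avint z^p\leq 2A_1\bar z^\mu$ on $[0,t_0(\delta))$ and a bound on $K(t)$ into \eqref{eqn:6.8h} to obtain a scalar differential inequality for $\bar z$, then integrate. The only difference is in how $K(t)$ is controlled. The paper does not appeal to \eqref{eqn:6.8}; instead it uses the standing hypothesis $r\geq 1$ and Jensen's inequality to get $\avint z^r\geq \bar z^r$, hence $K(t)\leq e^{(1+r\gamma-p)t}/\bar z^{r\gamma}\leq e^{1+r\gamma-p}/\bar z^{r\gamma}$ for $t\leq t_0(\delta)\leq 1$. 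This yields $\frac{d\bar z}{dt}\leq 2A_1e^{1+r\gamma-p}\bar z^{\mu-r\gamma}$, which integrates explicitly (recall $\mu>1+r\gamma$), and every constant appearing is manifestly independent of the profile parameter $\delta$.

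Your route through \eqref{eqn:6.8} has a gap precisely at the point you flag as ``the only point needing care''. The constant in \eqref{eqn:6.8} traces back to the bound \eqref{eqn:1.16h} of Theorem~\ref{thm:1.1}, and that bound is obtained by integrating a differential inequality forward from $t=0$: it therefore depends on $\avint u_0^{-\delta_0}$, i.e.\ on the initial data. Your sentence ``the resulting constant $c$ there does not see the profile parameter'' is not justified --- you have conflated the small exponent $\delta$ of Theorem~\ref{thm:1.1} with the profile parameter $\delta$ of \eqref{id}, and the former being universal says nothing about dependence on the latter. The repair is immediate: since $\varphi_\delta\geq 1$ on $[0,1]$ one has $u_0=\lambda\varphi_\delta\geq\lambda$, whence $\avint u_0^{-\delta_0}\leq \lambda^{-\delta_0}$ uniformly in the profile parameter, and your $\overline K$ is indeed $\delta$-uniform. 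Alternatively, avoid the detour through Theorem~\ref{thm:1.1} entirely and bound $K$ via Jensen as the paper does; this also explains why the hypothesis $r\geq 1$ is present.
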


\begin{proof}
Since $r\geq 1$ and $t_0(\delta)\leq 1$, it follows that
\[ \frac{d\bar{z}}{dt}\leq 2A_1e^{1+r\gamma-p}\bar{z}^{\mu-r\gamma}, \quad 0<t<t_0(\delta), \]
taking also into account relations (\ref{nnt}) and (\ref{eqn:6.8h}).

Setting $C_1=2A_1e^{1+r\gamma-p}$, we obtain
\[ \ol{z}(t)\leq \left[\bar{u}_0^{1+r\ga-\mu}-C_1(\mu-r\ga-1)t\right]^{-\frac{1}{\mu-r\ga-1}} \]
by (\ref{eqn:6.15}). Therefore, (\ref{eqn:6.18h}) holds for any $0<t<\min\{ t_1, t_0(\delta)\}$ where $t_1$ is estimated as
\bgee
t_1\leq\min\left\{\frac{1-2^{1+r\ga-\mu}}{C_1(\mu-r\ga-1)}\ol{u}_0^{1+r\ga-\mu},1\right\},
\egee
and it is independent of any $0<\delta<1.$
\end{proof}
Another fruitful estimate is provided by the next auxiliary result
\begin{lemma}\label{nkl3}
There exist $0<\delta_0<1$ and $0<\rho_0<\frac{3}{4}$ such that for any $0<\delta\leq \delta_0$  the following estimate holds
\begin{equation}
\frac{1}{\vert\Omega\vert}\int_{B(\rho_0,0)}z^p\leq \frac{A_2}{8}\bar{z}^\mu, \quad\mbox{for}\quad 0<t<\min\{ t_1, t_0(\delta)\}.
 \label{eqn:6.19}
\end{equation}
\end{lemma}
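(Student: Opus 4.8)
The plan is to show that the $L^p$-mass of $z$ concentrated in a small ball $B(\rho_0,0)$ around the origin is a negligible fraction of the total, using the pointwise bound $\rho^N z(\rho,t)\le\bar z(t)$ from Lemma \ref{lem3} together with the fact that $p>\frac{N}{N-2}$. First I would use \eqref{eqn:6.10}, which gives $z(\rho,t)\le\rho^{-N}\bar z(t)$, to estimate
\[
\frac{1}{|\Omega|}\int_{B(\rho_0,0)}z^p\,dx
= \frac{N}{\omega_N}\,\omega_N\!\!\int_0^{\rho_0}\! z(\rho,t)^p\rho^{N-1}\,d\rho
\le N\bar z(t)^p\!\!\int_0^{\rho_0}\!\rho^{N-1-Np}\,d\rho,
\]
after normalising $|\Omega|=|B(0,1)|=\omega_N/N$ so the average is $N\int_0^1(\cdot)\rho^{N-1}d\rho$. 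The key observation is that $p>\frac{N}{N-2}>1$ forces $N-1-Np<-1$, so the integral $\int_0^{\rho_0}\rho^{N-1-Np}\,d\rho$ diverges — this naive splitting is too crude near $\rho=0$. Hence I would instead split the ball into an inner region $0<\rho<\delta$ and a middle annulus $\delta\le\rho<\rho_0$. On the inner region, $z=e^tu_0=e^t\lambda\varphi_\delta$ is still essentially the initial profile only for $t$ near $0$; more robustly, one uses $z(\rho,t)\le z(0,t)\le\rho^{-N}\bar z(t)$ is not sharp enough, so the correct route is: on $\delta\le\rho<\rho_0$ apply \eqref{eqn:6.10} to get $z^p\le\rho^{-Np}\bar z^p$ and integrate, obtaining a bound $\lesssim \bar z^p\,\delta^{N-Np}$ which, crucially, can be made small by comparison to $\bar z^\mu$ once we know (from \eqref{eqn:6.18} and Lemma \ref{nkl2}) that $\bar z\le 2\bar u_0$ and $\bar z^{p-\mu}$ is comparable to $\bar u_0^{p-\mu}=\lambda^{p-\mu}/\alpha$-type constants; and on $0\le\rho<\delta$ one controls $z$ directly by $e^{t}\lambda\varphi_\delta(0)=e^t\lambda\delta^{-a}(1+a/2)$, giving a contribution $\lesssim \delta^N(\lambda\delta^{-a})^p = \lambda^p\delta^{N-ap}$, again small (since $N>ap$ because $a=\frac{2}{p-1}$ and $p>\frac{N}{N-2}$ give $ap=\frac{2p}{p-1}<N$).

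So the real engine is choosing the two small parameters in the right order. I would first fix $\rho_0<\frac34$ small enough that the middle-annulus contribution $N\bar z^p\int_\delta^{\rho_0}\rho^{N-1-Np}\,d\rho\le C\bar z^p\rho_0^{N-Np}$ (this exponent is negative, so in fact one gets $C\bar z^p\delta^{N-Np}$, dominated by choosing $\delta$ small — one must be careful which endpoint dominates since $N-Np<0$). The cleaner bookkeeping: $\int_\delta^{\rho_0}\rho^{N-1-Np}d\rho = \frac{1}{Np-N}\big(\delta^{N-Np}-\rho_0^{N-Np}\big)\le \frac{1}{Np-N}\delta^{N-Np}$, which is $O(\delta^{N-Np})$ and hence vanishes as $\delta\downarrow0$ only if $N-Np>0$ — which is false. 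This sign issue is exactly the main obstacle, and it means the bound on the annulus must come from $z$ being bounded there by its value at $\rho=\rho_0$ times decay, not from integrating $\rho^{-Np}$. The resolution used in \cite{ln09} is: on $\delta\le\rho\le\rho_0$ the profile $\varphi_\delta(\rho)=\rho^{-a}$ so $z(\rho,t)=e^t\lambda\rho^{-a}$ exactly at $t=0$ and stays controlled, giving $z^p\le (e^t\lambda)^p\rho^{-ap}$ with $ap<N$, so $\int_0^{\rho_0}z^p\rho^{N-1}d\rho\le C(e^t\lambda)^p\rho_0^{N-ap}$, which is now genuinely small as $\rho_0\downarrow0$. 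Therefore the plan is: bound $z$ above on all of $B(\rho_0,0)$ by the monotone profile $e^t\lambda\varphi_\delta$, use $\varphi_\delta(\rho)\le\rho^{-a}$ for $\rho\ge\delta$ and $\varphi_\delta\le C\delta^{-a}$ for $\rho<\delta$, integrate using $N>ap$ to get $\frac{1}{|\Omega|}\int_{B(\rho_0,0)}z^p\le C(t_1)\lambda^p\rho_0^{N-ap}$ for $0<t<t_1$, and compare with $\bar z^\mu\ge\bar u_0^\mu=(\lambda\bar\varphi_\delta)^\mu$, which by \eqref{ine} is $\ge c\lambda^\mu$; since $\mu<p$ is not assumed — in fact $\mu>1+r\gamma$ and we may take $\mu\le p$ by the choice of $q$ — the ratio is $\lesssim\rho_0^{N-ap}$ times a power of $\lambda$, and fixing $\rho_0$ then $\delta_0$ small enough yields \eqref{eqn:6.19}.

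The main obstacle, as indicated, is handling the singularity of $z$ at the origin: the pointwise estimate \eqref{eqn:6.10} alone does not integrate to a small quantity, so one genuinely needs the explicit structure of $\varphi_\delta$ — namely that its singularity exponent $a=\frac{2}{p-1}$ satisfies $ap<N$ under $p>\frac{N}{N-2}$ — to get an integrable power and a factor $\rho_0^{N-ap}$ that can be driven to zero independently of $\delta$. Once that is in hand, the comparison with $\bar z^\mu$ via Lemma \ref{kkl1}(ii) (and the already-established $\bar z\ge\bar u_0$, $\bar z\le2\bar u_0$) is routine, and the constants $t_1$, $\rho_0$, $\delta_0$ are selected in the order: $\rho_0$ to beat the geometric factor, then $\delta_0$ to control the inner-ball remainder $O(\delta^{N-ap})$, with $t_1$ from Lemma \ref{nkl2} keeping $e^t$ bounded.
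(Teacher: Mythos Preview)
Your proposal has a genuine gap. You correctly identify that the bound $z(\rho,t)\leq \rho^{-N}\bar z(t)$ from \eqref{eqn:6.10} is too crude to integrate (since $N-Np<-N$), and you correctly trace the difficulty to the singularity of $z$ at the origin. But your proposed fix --- ``bound $z$ above on all of $B(\rho_0,0)$ by the monotone profile $e^t\lambda\varphi_\delta$'' --- is unjustified and in fact false. The function $z$ solves $z_t=\Delta z+K(t)z^p$ with $K(t)>0$, and $e^t u_0$ is not a supersolution of this equation; there is no comparison principle that yields $z\leq e^t u_0$ for $t>0$. Indeed the entire point of this section is that $z$ blows up in finite time while $e^t\lambda\varphi_\delta(0)$ stays bounded on any finite interval, so the proposed bound cannot hold even on the short interval $(0,\min\{t_1,t_0(\delta)\})$, during which $z(0,t)$ may already grow without control in terms of $\varphi_\delta(0)$.

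The paper's proof takes a completely different route: it uses Lemma~\ref{lem:6.4} and a Friedman--McLeod type maximum-principle argument applied to the auxiliary function
\[
\psi=\rho^{N-1}z_\rho+\varepsilon\,\frac{\rho^N z^q}{\bar z^{\gamma+1}},
\]
with $1<q<p$ chosen so that $N>\frac{2p}{q-1}$ (possible precisely because $p>\frac{N}{N-2}$). One checks $\mathcal H[\psi]\leq -\frac{2q\varepsilon z^{q-1}}{\bar z^{\gamma+1}}\psi$ for $\varepsilon$ small, verifies $\psi<0$ on the parabolic boundary of $(0,\tfrac34)\times(0,\min\{t_1,t_0(\delta)\})$ (using \eqref{eqn:6.11} at $\rho=\tfrac34$ and the explicit form of $\varphi_\delta$ at $t=0$), and concludes $\psi\leq 0$ throughout. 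Integrating $\rho^{N-1}z_\rho\leq -\varepsilon\rho^N z^q/\bar z^{\gamma+1}$ yields a \emph{new} pointwise bound $z(\rho,t)\lesssim \rho^{-2/(q-1)}$, whose $p$-th power \emph{is} integrable near $0$ thanks to $N>\frac{2p}{q-1}$. This is the missing idea: a dynamically propagated pointwise decay estimate, not a comparison with the initial profile.
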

\begin{proof}
First observe that
\begin{equation}
\bar{u}_0\leq \bar{z}(t)\leq 2\bar{u}_0, \quad\mbox{for}\quad 0<t<\min\{t_1, t_0(\delta)\},
 \label{eqn:6.20}
\end{equation}
follows from (\ref{eqn:6.9}) and (\ref{eqn:6.18h}). Then, $\avint z^p$ is controlled by (\ref{lue}) for $0<\min\{ t_1, t_0(\delta)\}$. Since $p>q$ then Young's inequality guarantees that the second term of the right-hand side in (\ref{eqn:6.13h}) is negative for $0<t<\min\{ t_1, t_0(\delta)\}$, uniformly in $0<\delta<1$, provided that $0<\varepsilon\leq \varepsilon_0$ for some $0<\varepsilon_0\ll 1.$ Thus
\begin{equation}
\mathcal{H}[\psi]\leq
-\frac{2q\vep z^{q-1}}{\bar{z}^{\gamma+1}}
\psi \quad\mbox{in}\quad (0,1)\times (0,\min\{ t_1, t_0(\delta)\}).
 \label{eqn:6.21}
\end{equation}
Due to (\ref{eqn:6.10}) and (\ref{eqn:6.20}), we also have
\begin{eqnarray*}
\psi & = & \rho^{N-1}z_\rho+\varepsilon\cdot\frac{\rho^Nz^q}{\bar{z}^{\gamma+1}} \leq \rho^{N-1}z_\rho+\varepsilon\cdot\rho^{N(1-q)}\bar{z}^{q-\gamma-1} \\
& \leq & \rho^{N-1}z_\rho+C\cdot\varepsilon\rho^{N(1-q)}\quad\mbox{in}\quad (0,1)\times (0,\min\{ t_1, t_0(\delta)\})
\end{eqnarray*}
which, for $0<\varepsilon\leq \varepsilon_0,$ entails
\begin{equation}
\psi\left(\frac{3}{4},t\right)<0, \quad 0<t<\min\{ t_1, t_0(\delta)\}
 \label{eqn:6.22}
\end{equation}
by (\ref{eqn:6.11}) and provided that $0<\varepsilon_0\ll 1.$

Additionally \eqref{ik1} for $t=0$ gives
\begin{equation}
\psi(\rho,0) = \rho^{N-1}\left(\lambda \varphi_{\delta}'(\rho)+\varepsilon\lambda^{q-\gamma-1}\rho\cdot\frac{\varphi_\delta^q}{\bar{\varphi}_\delta^{\gamma+1}}\right).
 \label{eqn:6.23}
\end{equation}
Now if $0\leq \rho<\delta$ and $\varepsilon$ are chosen small enough and independent of $0<\de<\de_0,$ then the right-hand side of (\ref{eqn:6.23}) is estimated as follows:
\bgee
\rho^{N}\lambda\left( -a\delta^{-a-2}+\varepsilon\lambda^{q-\gamma-2}\cdot \frac{\varphi_\delta^q}{\bar{\varphi}_\delta^{\gamma+1}} \right)\lesssim \rho^{N}\lambda\left( -a\delta^{-a-2}+\varepsilon\lambda^{q-\gamma-2}\cdot \delta^{-aq} \right)\lesssim 0
\egee
since also
\[ \displaystyle\frac{\varphi_\delta^q}{\bar{\varphi}_\delta^{\gamma+1}}\lesssim \delta^{-aq}, \quad \delta\downarrow 0,\quad\mbox{uniformly in}\quad 0\leq \rho<\delta,\]
holds by (\ref{id}) and (\ref{ine}) for $m=1,$ taking also into account that $a+2=ap>ak.$

On the other hand, if $\delta\leq \rho\leq 1$ then we obtain
\begin{equation}
\psi(\rho,0)=\rho^N\lambda\left(-a\rho^{-a-1}+\varepsilon\lambda^{q-\gamma-1}\frac{\rho^{-aq+1}}{\bar{\varphi}_\rho^{\gamma+1}}\right),
 \label{eqn:6.24}
\end{equation}
by using again (\ref{ine}) for $m=1.$ Since $a+2=ap>aq$ implies $-a-1<-aq+1$, we  derive
\[ \psi(\rho,0)<0, \quad \de\leq \rho\leq \frac{3}{4}, \]
for any $0<\delta\leq \delta_0$ and $0<\varepsilon\leq \varepsilon_0$, provided $\varepsilon_0$ is chosen sufficiently small.

%For $\rho_1\leq \rho\leq \frac{3}{4}$, on the other hand, the right-hand side on (\ref{eqn:6.23}) is equal to %(\ref{eqn:6.24}).
Consequently we deduce
\begin{equation}
\psi(\rho,0)<0, \quad 0\leq \rho\leq \frac{3}{4},
 \label{eqn:6.25}
\end{equation}
for any $0<\delta\leq \delta_0$ and $0<\varepsilon\leq \varepsilon_0$, provided $0<\varepsilon_0\ll 1.$

Combining (\ref{eqn:6.21}), (\ref{eqn:6.22}) and (\ref{eqn:6.25}) we end up with
\[ \psi=\rho^{N-1}z_\rho+\varepsilon\cdot\frac{\rho^Nz^q}{\bar{z}^{\gamma+1}}\leq 0 \quad \mbox{in $(0,\frac{3}{4})\times (0,\min\{ t_1, t_0(\delta)\})$},  \]
which implies
\begin{equation}
z(\rho,t)\leq \left( \frac{\varepsilon}{2}(q-1)\right)^{-\frac{1}{q-1}}\cdot \rho^{-\frac{2}{q-1}}\cdot\bar{z}^{\frac{\gamma-1}{q-1}}(t) \quad \mbox{in $(0,\frac{3}{4})\times (0,\min\{ t_1, t_0(\delta)\})$}.
 \label{eqn:6.26}
\end{equation}
Since $-\frac{2}{q-1}\cdot p+N-1>-1$ due to $N>\frac{2p}{q-1},$ we finally obtain (\ref{eqn:6.19}) for some $0<\rho_0<\frac{3}{4}$.
\end{proof}
\begin{rem}\label{nkl1}
It is worth noting that relation \eqref{eqn:6.26} implies that if $z(\rho,t)$ blows up then this can only happen in the origin $\rho=0;$ that is, only a single-point blow-up is possible. In particular if we define
\[ \mathcal{S}=\{ x_0\in \overline{\Omega} \mid \exists x_k\rightarrow x_0, \ \exists t_k\uparrow T_\de, \ \lim_{k\rightarrow\infty}z(x_k, t_k)=+\infty \}, \]
to be the blow-up set of $z$  then ${\mathcal S}=\{0\}$ in the case $z$ blows up in finite time.
\end{rem}
Next we prove the key estimate \eqref{lue} using essentially  Lemmata \ref{nkl2} and \ref{nkl3}.

\begin{proof}[Proof of Proposition \ref{lem4}]
By virtue of (\ref{eqn:6.15}) and since $\frac{p-1}{r}<\delta$, there holds that $\ell=\frac{\mu}{p}>1.$ We can easily see that $\theta=\displaystyle{\frac{z}{\ol{z}^{\ell}}}$ satisfies
\bgee
&&\theta_t=\Delta \theta+ e^{(r\ga+1-p)t}\left[\frac{z^p}{\ol{z}^{\ell}\left(\avint z^r\right)^{\ga}}-\frac{\ell z\avint z^p}{\ol{z}^{\ell+1}\left(\avint z^r\right)^{\ga}} \right], \quad\mbox{in}\quad Q_0\\
&&\frac{\partial\theta}{\partial\nu}=0\quad\mbox{on}\quad\partial\Omega\times(0, \min\{ t_0, T_\delta\}),\\
&& \theta(x,0)=\frac{z(x,0)}{\bar{z}_0^{\ell}}\quad\mbox{in}\quad \Omega.
\egee
Now due to (\ref{eqn:1.17}), (\ref{eqn:6.9}), (\ref{eqn:6.10}), (\ref{eqn:6.18}), and (\ref{eqn:6.18h}), there holds that
\[ \left\Vert \theta,\ \frac{z^p}{\ol{z}^{\ell}\left(\avint z^r\right)^{\ga}}, \ \frac{\ell z\avint z^p}{\ol{z}^{\ell+1}\left(\avint z^r\right)^{\ga}}\right\Vert_{L^\infty((\Omega\setminus B(0,\rho_0))\times \min\{ t_1, t_0(\delta)\})} \leq C \]
uniformly in $0<\delta\leq \delta_0$.

Therefore, by the standard parabolic regularity, see DeGiorgi-Nash-Moser estimates in \cite[pages 144-145]{Lie96}, there is $0<t_2\leq t_1$ independent of $0<\de\leq \de_0$ such that
\[ \sup_{0< t<\min\{t_2, t_0(\delta)\}}\left\Vert \theta^p(\cdot,t)-\theta^p(\cdot,0)\right\Vert_{L^1(\Omega\setminus B(0,\rho_0))}\leq \frac{A_2}{8}\vert\Omega\vert, \]
which implies
\begin{equation}
\left\vert \frac{1}{\vert\Omega\vert}\int_{\Omega\setminus B(0,\rho_0))}\frac{z^p}{\bar{z}^\mu}-\frac{1}{\vert\Omega\vert}\int_{\Omega\setminus B(0,\rho_0)}\frac{z_0^p}{\bar{z}_0^\mu} \right\vert \leq \frac{A_2}{8}, \quad 0<t<\min\{ t_2, t_0(\delta)\},
 \label{eqn:6.28}
\end{equation}
for any $0<\delta\leq \delta_0$. Inequalities (\ref{eqn:6.19}) and (\ref{eqn:6.28}) entail
\[ \left\vert \avint \frac{z^p}{\ol{z}^\mu}-\avint \frac{z_0^p}{\ol{z}_0^\mu}\right\vert\leq \frac{3 A_2}{8}, \quad\mbox{for}\qquad 0<t<\min\{ t_2, t_0(\delta)\}\quad\mbox{and}\quad 0<\delta\leq \delta_0, \]
and hence
\begin{equation}\label{tbsd13}
\frac{5A_2}{8}\leq \avint \frac{z^p}{\ol{z}^{\mu}} \leq \frac{11 A_1}{8}, \quad 0<t<\min\{ t_2, t_0(\de)\}, \ 0<\delta\leq \delta_0,
\end{equation}
taking into account that
\[ A_2\leq \avint\frac{z_0^p}{\bar{z}_0^\mu}\leq A_1. \]
Therefore, if we take $t_0(\delta)\leq t_2$ then it follows that
\[ \frac{1}{2}A_2\bar{z}^\mu<\frac{5}{8}A_2\bar{z}^\mu\leq \avint z^p\leq \frac{11}{8}A_1\bar{z}^\mu<2A_1\bar{z}^\mu, \quad 0<t<t_0(\delta), \]
and by a continuity argument we deduce that
\[ \frac{1}{2}A_2\bar{z}^\mu\leq \avint z^p\leq 2A_1\bar{z}^\mu,  \quad 0<t<t_0(\delta)+\eta,\]
for some $\eta>0,$ which contradicts the definition of $t_0(\delta)$.

Consequently, we obtain $t_2<t_0(\delta)$ for any $0<\delta\leq \delta_0$, and the proof is complete with $t_0=t_2$.
\end{proof}
Now we have all the ingredients to proceed to the proof of the main result of this section.
\begin{proof}[Proof of Theorem \ref{thm:6.1}]
Since $t_0\leq t_1$ in (\ref{eqn:6.18h}), we have
\begin{eqnarray}
K(t) & &\geq \frac{1}{\left(\avint z^r\right)^\gamma}\geq
\frac{1}{\left( \avint z^p\right)^{\frac{r\gamma}{p}}}\geq
\left( \frac{1}{2A_1\bar{z}^\mu}\right)^{\frac{r\gamma}{p}} = \left( \frac{1}{2A_1}\right)^{\frac{r\gamma}{p}}\cdot\left(
\frac{1}{\bar{z}}\right)^{\frac{r\gamma}{p}\mu} \nonumber\\
& & = \left( \frac{1}{2\alpha_1}\right)^{\frac{r\gamma}{p}}\cdot\left( \frac{1}{2\bar{\varphi}_\delta}\right)^{\frac{r\gamma}{p}\mu}\lambda^{-r\gamma}\geq d\lambda^{-r\gamma}\equiv D, \quad 0<t<\min\{ t_0, T_\delta\},
 \label{eqn:6.33}
\end{eqnarray}
by virtue of (\ref{eqn:6.7}) and (\ref{eqn:6.14}). Since $0<\lambda\leq \lambda_0(d)$, then inequality (\ref{eqn:6.4}) applies to derive
\begin{equation}
\Delta u_0+Du_0^p\geq 2u_0^p
 \label{eqn:6.34}
\end{equation}
for any $0<\delta\leq \delta_0$.

By using (\ref{eqn:6.33}) and (\ref{eqn:6.34}) the comparison principle yields that the solution $z$ of \eqref{tbsd3}-\eqref{tbsd3b} satisfies
\begin{equation}
z\geq \tilde{z} \quad\mbox{in}\quad Q_0\equiv\Omega\times (0, \min\{ t_0, T_\delta\}),
 \label{eqn:6.35}
\end{equation}
where $\tilde z=\tilde z(x,t)$ solves the following
\bge
&&\tilde{z}_t=\Delta \tilde{z}+D\tilde{z}^p, \quad\mbox{in}\quad Q_0,\label{eqn:6.36}\\
&&\frac{\partial \tilde{z}}{\partial\nu}=0,\quad\mbox{on}\quad\partial\Omega\times(0, \min\{ t_0, T_\delta\}),\\
&&\tilde{z}(|x|,t)=u_0(\vert x\vert)\quad\mbox{in}\quad \Om.
 \label{eqn:6.36}
\ege
%At the same time, the existence of this $\tilde z=\tilde z(\cdot,t)\geq 0$ is established in a time interval $0\leq %t<\min\{ t_0, T_\delta\}$.
Let us now introduce
\[ h(x,t):=\tilde{z}_t(x,t)-\tilde{z}^p(x,t), \]
then due to (\ref{eqn:6.34}) and (\ref{eqn:6.36})  $h$ satisfies
\begin{eqnarray*}
h_t & = & \Delta h+p(p-1) \tilde{z}^{p-2} |\nabla \tilde{z}|^2+ D p \tilde{z}^{p-1}\,h \\
& \geq & \Delta h+ D p \tilde{z}^{p-1}\,h \quad\mbox{in}\quad Q_0,
\end{eqnarray*}
and
\bgee
h(x,0)= \Delta \tilde{z}(x,0)+D\tilde{z}^p(x,0)-\tilde{z}^p(x,0)=\Delta u_0+(D-1)u_0^p\geq u_0^p>0,\quad\mbox{in}\quad \Om
\egee
with boundary condition
\bgee
\frac{\partial h}{\partial \nu}=0\quad\mbox{on}\quad\partial\Omega\times(0, \min\{ t_0, T_\delta\}).
\egee
Then the maximum principle entails that $h>0$ in $Q_0$, that is,
\begin{equation}
\tilde{z}_t>\tilde{z}^p\quad\mbox{in}\quad Q_0.
 \label{eqn:6.37}
\end{equation}
Inequality (\ref{eqn:6.37}) implies
\[ \tilde{z}(0,t)\geq\left(\frac{1}{z_0^{p-1}(0)}-(p-1)t\right)^{-\frac{1}{p-1}}=\left\{\left(\frac{\de^{a}}{\la(1+\frac{a}{2})}\right)^{p-1}-(p-1)t\right\}^{-\frac{1}{p-1}}
\]
for $0<t<\min\{ t_0, T_\delta\}$, and therefore,
\begin{equation}
\min\{ t_0, T_\delta\}<\frac{1}{p-1}\cdot
\left( \frac{\delta^a}{\lambda(1+\frac{a}{2})}\right)^{p-1}.
 \label{eqn:6.38}
\end{equation}
For $0<\delta\ll 1$, the right-hand side on (\ref{eqn:6.38}) is less than $t_0$, and then $T_\delta<+\infty$ follows. Furthermore, by \eqref{eqn:6.38} $T_\de \to 0$ as $\de\to 0$ and the proof is complete.
\end{proof}
\begin{rem}
The blowing up solution $u$ obtained in Theorem \ref{thm:6.1} exhibits a single-point blow-up at the origin $\rho=0.$ Recalling that $z=e^tu$ we obtain the occurrence of single-point blow-up for $u$ in view of Remark \ref{nkl1}.

An alternative way to prove single-point blow-up is by virtue of the following estimate
\begin{equation}
\avint z^p\,dx=\frac{1}{|B_1(0)|}\int_0^1 \rho^{N-1} z^p\,d\rho \leq C,\quad\mbox{for}\quad 0<t\leq T_{\delta},
 \label{eqn:7.1}
\end{equation}
which holds due to \eqref{lue} and \eqref{eqn:6.18h}, taking $0<\delta\ll 1$ small enough such that $T_\delta\leq t_0.$
Then since $z=z(\rho, t)$ is radially decreasing, then (\ref{eqn:7.1}) implies that ${\mathcal S}=\{0\}.$
\end{rem}
\section{Blow-up rate and blow-up pattern}\label{sec:7}
One of our purposes in the current section is to determine the blow-up rate of the diffusion-driven blowing up solution provided by Theorem \ref{thm:6.1}. We also intend to identify its blow-up pattern (profile) and thus reveal the formed patterns anticipated in this {\it DDI} event.
%For that purpose we appeal to Proposition \ref{lem4} as well as to some ideas presented in \cite{qs07}.
\begin{theorem}\label{tbu}
Let $N\ge 3,\;\max\{r, \frac{N}{N-2}\}<p<\frac{N+2}{N-2}$ and $\frac{2}{N}<\frac{p-1}{r}<\gamma.$  Then the blow-up rate of the diffusion-induced blowing-up solution of Theorem \ref{thm:6.1} is determined as follows
\be
\Vert u(\cdot, t)\Vert_\infty \ \approx \ (T_{\max}-t)^{-\frac{1}{p-1}}, \quad t\uparrow T_\de,\label{ik2}
\ee
 where $T_{\max}$ stands for the blow-up time.
\end{theorem}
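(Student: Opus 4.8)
The plan is to establish the blow-up rate \eqref{ik2} by proving matching upper and lower bounds for $\Vert u(\cdot,t)\Vert_\infty$ near $T_\delta$. The lower bound is the easy direction: combining Proposition \ref{lbd}, the estimate \eqref{eqn:1.17}, and the fact that $r\le p$, we have $\avint u^r \ge c>0$ uniformly in time, so the non-local coefficient $\bigl(\avint u^r\bigr)^{-\gamma}$ in \eqref{bsd1} is bounded above, say by $C_0$. Hence $v=\Vert u(\cdot,t)\Vert_\infty$ satisfies (in the sense of the standard differential inequality for the sup-norm of a subsolution, as in \cite{ss12}) $\frac{dv}{dt}\le -v+C_0 v^p\le C_0 v^p$, which on integrating from $t$ to $T_\delta$ and using $v\to\infty$ gives $v(t)\ge c\,(T_\delta-t)^{-1/(p-1)}$. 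With $T_{\max}:=T_\delta$ this is the lower half of \eqref{ik2}.

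For the upper bound I would work with $z=e^tu$ solving \eqref{tbsd3}, so that $z$ blows up exactly when $u$ does and it suffices to prove $\Vert z(\cdot,t)\Vert_\infty\le C(T_\delta-t)^{-1/(p-1)}$. By \eqref{eqn:6.8} we have $0<K(t)\le C$, so $z$ is a subsolution of a scalar semilinear heat equation $z_t=\Delta z+C z^p$ with $1<p<\frac{N+2}{N-2}$, i.e. the Sobolev-subcritical range. The key step is to show that the solution is of \emph{type I} (no faster-than-self-similar blow-up). For this I would invoke the energy/rescaling machinery of Giga--Kohn: since $p$ is subcritical and the solution is radially decreasing (hence the blow-up set is $\{0\}$ by Remark \ref{nkl1}), one rescales $w_\lambda(y,s)=(T_\delta-t)^{1/(p-1)}z(x,t)$ with $x=y\sqrt{T_\delta-t}$, $s=-\log(T_\delta-t)$, and shows the rescaled orbit stays bounded by controlling the associated Lyapunov functional; the radial monotonicity and the a priori $L^p$-bound \eqref{eqn:7.1} (valid for $0<\delta\ll1$) feed the required compactness. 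This gives $\Vert z(\cdot,t)\Vert_\infty\le C(T_\delta-t)^{-1/(p-1)}$, hence the upper half of \eqref{ik2}.

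The main obstacle is precisely the type I upper estimate: the non-local coefficient $K(t)$ is only known to be bounded, not to converge, so one cannot quote a clean autonomous result and must instead run the Giga--Kohn argument for the \emph{nonautonomous} equation $z_t=\Delta z+K(t)z^p$ with $0<c\le K(t)\le C$. The time-dependence of $K$ enters the rescaled energy as a slowly varying factor; one must check that it does not destroy the monotonicity/compactness of the rescaled flow. The hypotheses $\max\{r,\tfrac{N}{N-2}\}<p<\tfrac{N+2}{N-2}$ are exactly what is needed here: subcriticality $p<\tfrac{N+2}{N-2}$ for the Giga--Kohn estimates, and $p>\tfrac{N}{N-2}$ together with $\tfrac{p-1}{r}<\gamma$ to guarantee (via Proposition \ref{lem4} and \eqref{eqn:7.1}) the $L^p$ a priori bound driving the rescaling. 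Once type I is in hand, the lower bound above and a standard argument combining the two give \eqref{ik2}.
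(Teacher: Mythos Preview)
Your lower bound is essentially the paper's argument: the uniform positivity of $\avint u^r$ from \eqref{eqn:1.17} gives $K(t)\le C$, and then $\frac{d}{dt}\Vert z(\cdot,t)\Vert_\infty\le K(t)\Vert z\Vert_\infty^p$ integrates to the lower half of \eqref{ik2}. No issues there.

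The upper bound is where your plan diverges from the paper. You correctly identify the obstacle---$K(t)$ is a priori only bounded, not convergent, so a clean autonomous type~I result does not apply directly---and you propose to push the Giga--Kohn machinery through the nonautonomous equation $z_t=\Delta z+K(t)z^p$. That is a nontrivial undertaking: the rescaled energy is no longer monotone once $K$ varies, and you would need quantitative control on $K'$, which you do not have. The paper circumvents this entirely by first \emph{proving} that $K(t)$ converges to a finite positive limit as $t\uparrow T_{\max}$, and only then invoking a standard type~I result (\cite[Theorem~44.3(ii)]{qs07}) for the subcritical range $p<\frac{N+2}{N-2}$.

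The convergence of $K(t)$ is obtained as follows. Since $K(t)\le C$, the solution $z$ is dominated by the solution $\Phi$ of the autonomous problem $\Phi_t=\Delta\Phi+C\Phi^p$ with the same data, and \cite[Theorem~44.6]{qs07} gives a pointwise bound $\Phi(x,t)\le C_\eta|x|^{-2/(p-1)-\eta}$ uniform in $t$; hence the same holds for $z$. This bound, together with parabolic regularity away from the origin, shows that $z(x,t)$ converges as $t\uparrow T_{\max}$ for every $x\neq 0$. The hypothesis $p>\frac{N}{N-2}$ (equivalently $\frac{2p}{p-1}<N$) then makes the spatial bound integrable, so dominated convergence yields $\avint z^r\to$ a positive finite limit, hence $K(t)\to\omega\in(0,\infty)$. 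This is the step you are missing; once you have it, the nonautonomous difficulty evaporates and the upper bound follows from the literature rather than from a bespoke rescaling argument.
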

\begin{proof}
We first note that
\bge\label{nkl4}
0<K(t)=\frac{e^{(1+r\gamma-p)t}}{\Big(\displaystyle\avint z^r \Big)^{\gamma}}\leq C<\infty,
\ege
by virtue of \eqref{eqn:7.1} and in view of H\"{o}lder's inequality since $p>r.$

Consider now $\Phi$ satisfying
\bgee
&&\Phi_t=\Delta \Phi+C\Phi^{p},\quad\mbox{in}\quad \Om\times (0,T_{max}),\\
&&\frac{\partial \Phi}{\partial \nu}=0,\quad\mbox{on}\quad \partial\Om\times(0,T_{max}), \\
&&\Phi(x,0)=z_0(x), \quad\mbox{in}\quad \Om,
\egee
then via comparison $z\leq \Phi$ in $\Om\times (0,T_{max}).$

Yet it is known, see \cite[Theorem 44.6]{qs07}, that
\bgee
|\Phi(x,t)|\leq C_{\eta}|x|^{-\frac{2}{p-1}-\eta}\quad\mbox{for}\quad \eta>0,
\egee
when $x\in \Om,\;0<t<T_{max},$ and thus
\bge\label{tbsd18}
|z(x,t)|\leq C_{\eta}|x|^{-\frac{2}{p-1}-\eta}\quad\mbox{for}\quad (x,t)\in \Om\times (0, T_{max}),
\ege
which by virtue of \eqref{nkl4}, \eqref{tbsd18} and using also standard parabolic estimates entails that
\bge\label{nkl5}
z\in \mathcal{BUC^{\sigma}}\left(\left\{\rho_0<|x|<1-\rho_0\right\}\times \left(\frac{T_{max}}{2}, T_{max}\right)\right)
\ege
for some $\sigma\in(0,1)$ and each $0<\rho_0<1,$ where $\mathcal{BUC^{\sigma}}(M)$ denotes in general the Banach space of all bounded and uniform $\sigma-$H\"{o}lder continuous functions $h:M\subset\R^N\to \R;$ see also \cite{qs07}.

Consequently \eqref{nkl5} implies that  $\lim_{t\to T_{max}}z(x,t)$ exists and it is finite for all $x\in B_1(0)\setminus\{0\}.$

Recalling that $\displaystyle{\frac{2p}{p-1}}<N$ (or equivalently  $p>\displaystyle{\frac{N}{N-2}},\; N>2$ ) then by using \eqref{nkl4},\eqref{tbsd18} and in view of the dominated convergence theorem we derive
\bge\label{tbsd18a}
\lim_{t\to T_{max}} K(t)=\omega\in(0,+\infty).
\ege
Applying now Theorem 44.3(ii) in \cite{qs07}, taking also into account \eqref{tbsd18a}, we can find a constant $C_{u}>0$ such that
\bge\label{ube}
\left|\left|z(\cdot,t)\right|\right|_{\infty}\leq C_{u}\left(T_{max}-t\right)^{-\frac{1}{(p-1)}}\quad\mbox{in}\quad (0, T_{max}).
\ege
On the other hand, setting $N(t):=\left|\left|z(\cdot,t)\right|\right|_{\infty}=z(0,t)$ then $N(t)$ is differentiable for almost every $t\in(0,T_{\de}),$ in view of  \cite{fmc85}, and it also satisfies
\bgee
\frac{dN}{dt}\leq K(t) N^p(t).
\egee
Now since $K(t)\in C([0,T_{\max}))$ is bounded in any time interval $[0,t],\; t<T_{\max},$ then upon integration we obtain
\bge\label{lbe}
\left|\left|z(\cdot,t)\right|\right|_{\infty}\geq C_l\left(T_{\de}-t\right)^{-\frac{1}{(p-1)}}\quad\mbox{in}\quad (0, T_{\max}),
\ege
for some positive constant $C_l.$

Since $z(x,t)=e^t u(x,t) $ then by virtue of \eqref{ube} and \eqref{lbe} we obtain
\bgee
\widetilde{C}_l\left(T_{max}-t\right)^{-\frac{1}{(p-1)}}\leq\left|\left|u(\cdot,t)\right|\right|_{\infty}\leq \widetilde{C}_u\left(T_{max}-t\right)^{-\frac{1}{(p-1)}}\quad\mbox{for}\quad t\in(0, T_{max}),
\egee
where now $\widetilde{C}_l, \widetilde{C}_u$ depend on $T_{max},$
which actually leads to \eqref{ik2}.
\end{proof}
\begin{rem}\label{nkl6}
Condition \eqref{ik2} implies that the diffusion-induced blow-up of Theorem \ref{thm:6.1} is of type I, i.e. the blow-up mechanism is controlled by the ODE part of \eqref{bsd1}.

In contrast, for the finite-time blow-up furnished by Proposition \ref{lem:1.2} and Theorems \ref{thm:1.2} and \ref{obu1} we cannot derive a blow-up as in \eqref{ik2} since the blow-up of some $L^{\ell}-$norm, $\ell\geq 1,$ in each of these cases entails that \bgee
K(t)=\frac{e^{(1-p)t}}{\Big(\displaystyle\avint u^r \Big)^{\gamma}}\to 0\quad\mbox{as}\quad t\to T_{max},
\egee
and thus the approach of Theorem \ref{tbu} fails. This might be an indication that in the preceding cases finite-time blow-up is rather of type II.
\end{rem}
\begin{rem}\label{nkl7}
First observe that \eqref{tbsd18} provides a rough form of the blow-up pattern for $z$ and thus for $u$ as well. Nonetheless, due to \eqref{nkl4} then the non-local problem \eqref{tbsd3}-\eqref{tbsd3b} can be treated as the corresponding local one for which the following more accurate asymptotic blow-up profile, \cite{mz98}, is available
\bgee
\lim_{t\to T_{max}}z(|x|,t)\sim C\left[\frac{|\log |x||}{|x|^2}\right]\quad\mbox{for}\quad |x|\ll 1.
\egee
Therefore using again that $z=e^tu$ we derive a similar asymptotic blow-up profile for the driven-induced blowing up solution $u.$ This actually reveals the form of the developed patterns which are induced as a result of the {\it DDI} and will be numerically verified  in a forthcoming paper.
\end{rem}
\section{Conclusions}\label{conc}
The main purpose of the current manuscript is to unveil under which circumstances the dynamics of the interaction of the two morphogens (activator and inhibitor), described by the Gierer-Meinhardt system \eqref{gm1}-\eqref{gm4}, can be controlled by governing only the dynamics of the activator itself given by the non-local problem \eqref{bsd1}-\eqref{bsd3}. We derive some global-in-time existence as well as blow-up results both in finite and in infinite time for \eqref{bsd1}-\eqref{bsd3}. Global-in-time existence results guarantee the controlled growth of the activator as described by \eqref{bsd1}-\eqref{bsd3} whereas finite-time and infinite-time blow-up results are relevant with the activator's unlimited growth. We discovered, that there are cases, see Proposition \ref{ats1} and Theorems \ref{thm:1.2} and \ref{thm:1.2a}, where there is a serious discrepancy between the dynamics of the full system \eqref{gm1}-\eqref{gm4} and those of the non-local problem \eqref{bsd1}-\eqref{bsd3}. On the other hand, under other circumstances, see  Theorems \ref{thm:1.3}-\ref{thm:1.7}, then both  \eqref{gm1}-\eqref{gm4} and (\ref{bsd1})-(\ref{bsd3}) ensemble the same long-time dynamics.  In particular, in Theorems \ref{obu1} and \ref{thm:1.6} we show that the occurrence of some invariant regions for an associated dynamical system is vital in order to control the dynamics of the non-local problem \eqref{bsd1}-\eqref{bsd3} and thus the activator's growth. In addition,  we prove that under a Turing condition a {\it DDI} occurs, which is exhibited in the form of a driven-diffusion blow-up. The resulting destabilization enables the formation of some patterns, as anticipated in a Turing instability case. The form of the observed patterns is completely described via the study of the blow-up profile, see Remark \ref{nkl7}. Consequently, in that case the pattern formation for activator's concentration can be efficiently predicted and prescribed by the dynamics of the non-local problem \eqref{bsd1}-\eqref{bsd3}.
\section*{Acknowledgments}
This work was supported by JSPS Grant-in-Aid Scientific Research (A) 26247013 and Core-to-Core project. Part of the current work was inspired and initiated when the first author was visiting the Department of System Innovation of  Osaka University. He would like to express his gratitude for the warm hospitality.

The authors would also like to thank the anonymous reviewers for the their stimulating comments, which  substantially improved the form of the manuscript.

\end{document}